\documentclass[11pt]{article}

\usepackage{amssymb}
\usepackage{amsmath}
\usepackage{graphicx,color}
\usepackage{amsfonts}
\usepackage[ignoreall]{geometry}
\usepackage{setspace}
\usepackage{natbib}
\usepackage{subcaption} 

%\doublespacing

\oddsidemargin=0in
\evensidemargin=0in
\topmargin=0in
\headsep=0in
\headheight=0in
\textheight=9in
\textwidth=6.5in

\def\({{\Bigl(}}
\def\){{\Bigr)}}
\def\[{{\Bigl[}}
\def\]{{\Bigr]}}

\def\Pr{P}

\newcommand{\td}{\theta^{\dagger}}    
\newcommand{\ld}{\lambda^{\dagger}}
\newcommand{\gd}{\gamma^{\dagger}}
\newcommand{\Qd}{Q^{\dagger}}
\newcommand{\rd}{{\rho^{\dagger}_{g_{\theta_0}}}}
\newcommand{\xd}{ {\xi^{\dagger}} }

\def\M{\mathcal M}

\newtheorem{theorem}{Theorem}

\newtheorem{corollary}[theorem]{Corollary}

\newtheorem{definition}[theorem]{Definition}
\newtheorem{example}[theorem]{Example}

\newtheorem{lemma}[theorem]{Lemma}

\newtheorem{proposition}[theorem]{Proposition}
\newtheorem{remark}[theorem]{Remark}

\newenvironment{proof}[1][Proof]{\noindent\textbf{#1.} }{\
	\rule{0.5em}{0.5em}}

%\endlocaldefs

\begin{document}

%\begin{frontmatter}

\title{Chernoff Index for Cox Test of Separate Parametric Families}
%\runtitle{Chernoff Index for Cox Test of Separate Parametric Families}

\author{Xiaoou Li, Jingchen Liu, and Zhiliang Ying\\
	Columbia University}
\maketitle

\baselineskip=20pt

%\runauthor{Li, Liu, and Ying}

\begin{abstract}
The asymptotic efficiency of a generalized likelihood ratio test proposed by Cox is studied under the large deviations framework  for error probabilities developed by Chernoff. In particular, two separate parametric families of  hypotheses are considered \citep{cox1961tests, cox1962further}. The significance level is set such that the maximal type I and  type II error probabilities for the generalized likelihood ratio test  decay exponentially fast with the same rate. We derive the analytic form of such a rate that is also known as the Chernoff index \citep{chernoff1952measure}, a relative efficiency measure when there is no preference between the null and the alternative hypotheses.
We further extend the analysis to approximate error probabilities when the two families are not completely separated.
Discussions are provided concerning  the implications of the present result on  model selection.

\end{abstract}

%\begin{keyword}[class=MSC]
%\kwd[Primary ]{}
%\kwd{}
%\kwd[; secondary ]{}
%\end{keyword}

%\begin{keyword}
%\kwd{Asymptotic relative efficiency}
%\kwd{Generalized likelihood ratio}
%\kwd{Large deviation; Model selection}
%\kwd{Non-nested hypotheses}
%\end{keyword}

%\end{frontmatter}

\section{Introduction}

\cite{cox1961tests, cox1962further} introduced the problem of testing two separate parametric families.
Let $X_1, \dots, X_n$ be independent and identically distributed real-valued observations from a population with density $f$ with respect to some baseline measure $\mu$. Let $\{g_\theta, \ \theta\in \Theta\}$ and $\{h_\gamma, \  \gamma\in \Gamma\}$ denote two separate parametric families of density functions with respect to the same measure $\mu$.
Consider testing $H_0$: $f\in\{g_\theta, \ \theta\in \Theta\} $ against $H_1$: $f\in\{h_\gamma, \  \gamma\in \Gamma\}$.
To avoid singularity, we  assume that all the distributions in the families $g_\theta$ and $h_\gamma$ are mutually absolutely continuous so that the likelihood ratio stays away from zero and infinity. Furthermore, we assume that the model is correctly specified, that is, $f$ belongs to either the $g$-family or the $h$-family.

Recently revisiting this problem, \cite{cox2013return} mentioned
several applications such as  the one-hit and two-hit models of
binary dose-response and testing of interactions in a balanced
$2^k$ factorial experiment.
    Furthermore, this problem has been studied in econometrics  \citep{vuong1989likelihood, white1982maximum, white1982regularity, pesaran1974general, pesaran1978testing, davidson1981several}.
    For more applications of testing separate families of hypotheses, see  \cite{berrington2007interpretation} and \cite{braganca2005separate}, and the references therein.  
Furthermore, there is a discussion of model misspecification, that is, $f$ belongs to neither the $g$-family nor the $h$-family, which is beyond the current discussion.
For semiparametric models, \cite{fine2002} proposed a similar test for non-nested hypotheses under the Cox proportional hazards model assumption.

In the discussion of \cite{cox1962further}, the test statistic $l=l_g(\hat \theta) - l_h(\hat \gamma) - E_{g_{\hat\theta} } \{l_g(\hat \theta) - l_h(\hat \gamma)\}$ is considered.
The functions $l_g(\theta)$ and $l_h(\gamma)$ are the log-likelihood functions under the $g$-family and the $h$-family and $\hat\theta$ and $\hat\gamma$ are the corresponding maximum likelihood estimators.
Rigorous distributional discussions of statistic $l$ can be found in \cite{huber1967behavior} and \cite{white1982maximum,white1982regularity}.
In this paper, we consider the generalized likelihood ratio statistic
\begin{equation}\label{glr}
LR_n=\frac {\max_{\gamma\in\Gamma} \prod_{i=1}^nh_\gamma(X_i)}{\max_{\theta\in\Theta}\prod_{i=1}^ng_\theta(X_i)} = e^{l_h(\hat \gamma)-l_g(\hat \theta)}
\end{equation}
that is slightly different from Cox's approach. We are interested in the Chernoff efficiency, whose definition is provided in Section~\ref{SecChernoff},
 of the generalized likelihood ratio test.

In the hypothesis testing literature, there are several measures of asymptotic relative efficiency for simple null hypothesis against simple alternative hypothesis.
Let $n_1$ and $n_2$ be the necessary sample sizes for each of two testing procedures to perform equivalently in the sense that they admit the same type I and type II error probabilities.
Then, the limit of ratio $n_1/n_2$ in the regime that both sample sizes tend to infinity represents the asymptotic relative efficiency between these two procedures.

Relative efficiency depends on the asymptotic manner of the two types of error probabilities with large samples.
Under different asymptotic regimes, several asymptotic efficiency measures are proposed and they are summarized in Chapter 10 of \cite{serfling1980approximation}.
Under the regime of Pitman efficiency, several asymptotically equivalent tests to Cox test exist.
Furthermore, \cite{pesaran1984asymptotic} and \cite{rukhin1993bahadur} applied Bahadur's criterion of asymptotic comparison \citep{bahadur1960stochastic, bahadur1967rates} to tests for separate families  and compared different tests for lognormal against exponential distribution and for non-nested linear regressions.
There are other efficiency measures that are frequently considered, such as Kallenberg efficiency \citep{Kal83}.
%\cite{rukhin1993bahadur} further pointed out that the lack of Badadur's efficiency can be a disadvantage of Cox's test and in the case of lognormal against exponential case, Cox's test has 0 Bahadur efficiency.

%The Chernoff index considers that these two errors decays equality fast representing that there is no preference among the two hypothesis.
In the context of testing a simple null hypothesis against a fixed simple alternative hypothesis, \cite{chernoff1952measure} introduces a measure of asymptotic efficiency for tests based on sum of independent and identically distributed observations,  a special case of which is the likelihood ratio test.
%By means of large deviations approximations, both type I and type II error probabilities decay exponential fast as the sample size becomes large.
This efficiency is introduced by showing no preference between the null hypothesis and the alternative hypothesis.
The rejection region is setup such that the two types of error probabilities decay at the same exponential rate $\rho$. The rate $\rho$ is later known as the Chernoff index. A brief summary of the Chernoff index is provided in Section \ref{SecChernoff}.

The basic strategy of \cite{chernoff1952measure} is applying large deviations techniques to the log-likelihood ratio statistic and computes/approximates the probabilities of the two types of errors.
Under the situation when either the null hypothesis or the alternative hypothesis is composite, one naturally considers the generalized likelihood ratio test.
To the authors' best knowledge, the asymptotic behavior of the generalized likelihood ratio test under the Chernoff's regime remains an open problem.
This is mostly because large deviations results are not directly applicable as the test statistic is the ratio of the supremums of two random functions.
This paper fills in this void and provides a definitive conclusion of the asymptotic efficiency of the generalized likelihood ratio test under Chernoff's asymptotic regime.
We define the Chernoff index via the asymptotic decay rate of the maximal type I and type II error probabilities that is also the minimax risk corresponding to the zero-one loss function.
%In this paper, we consider the same asymptotic regime as in   Chernoff (1952).

We compute the generalized Chernoff index of the generalized likelihood ratio
test for two separate parametric families that keep a certain
distance away from each other. That is, the Kullback-Leibler
distance between $g_\theta$ and $h_\gamma$ are bounded away from
zero for all $\theta \in \Theta$ and $\gamma \in \Gamma$. We use
$\rho_{\theta\gamma}$ to denote the Chernoff index of the
likelihood ratio test for the simple null $H_0:~f=g_\theta$
against simple alternative $H_1:~f=h_\gamma$. Under mild moment
conditions, we show that the exponential decay rate of the maximal error probabilities is simply the minimum of the one-to-one
Chernoff index $\rho_{\theta\gamma}$ over the parameter space,
that is, $\rho = \min_{\theta, \gamma}\rho_{\theta\gamma}$.  This
result suggests that the generalized likelihood ratio test is
asymptotically the minimax strategy in the sense that with the same sample
size it achieves the optimal exponential decay rate of the maximal type I
and  type II error probabilities when they decay equally fast. The
present result can also be generalized to asymptotic analysis of Bayesian model selection among two or more
families of distributions. A key technical component is to deal
with the excursion probabilities of the likelihood functions, for
which random field and non-exponential change of measure
techniques are applied. This paper also in part corresponds to the
conjecture in \cite{cox2013return} ``formal discussion of possible
optimality properties of the test statistics would, I think,
require large deviation theory'' though we consider a slightly
different statistic.

We further extend the analysis to the cases when the two families may not be completely separate, that is, one may find two sequences of distributions in each family and the two sequences converge to each other. For this case, the Chernoff index is zero. We provide asymptotic decay rate of the type I error probability under a given distribution $g_{\theta_0}$ in $H_0$. To have the problem well-posed, the minimum Kullback-Leibler divergence between $g_{\theta_0}$ and all distributions in $H_1$ has to be bounded away from zero. 
	The result is applicable to both separated and non-separated families and thus \emph{it provides a means to approximate the error probabilities of the generalized likelihood ratio test for general parametric families.}
This result has important theoretical implications in hypothesis testing, model selection, and other areas where maximum likelihood is employed.  We provide a discussion concerning variable selection for regression models.

The rest of this paper is organized as follows. We present our main results for separate families of hypotheses in Section \ref{SecMain}. Further extension to more than two families and Bayesian model selection is discussed  in Section \ref{SecExt}. Results for possibly non-separate families are presented in Section \ref{SecNonSep}.  Numerical examples are provided in Section \ref{SecNE}. Lastly a concluding remark is give in Section \ref{SecConclude}.

\section{Main results}\label{SecMain}

\subsection{Simple null against simple alternative -- a review of Chernoff index}\label{SecChernoff}

In this section we state the main results and their implications. To start with, we provide a brief review of Chernoff index for simple null versus simple alternative; then, we proceed to the case of simple null versus composite alternative; furthermore, we present the generalized Chernoff index for the composite null versus composite alternative.
% lastly, we show that the results can be generalized to the computation of Bayes factor.

Under the context of simple null hypothesis versus simple
alternative hypothesis, we have the null hypothesis $H_0:$ $f=g$
and the alternative hypothesis $H_1:$ $f=h$. We write the
log-likelihood ratio of each observation as $l^i = \log h(X_i) - \log
g(X_i).$ Then, the likelihood ratio is $LR_n = \exp(\sum_{i=1}^n
l^i).$ We use $l$ to denote the generic random variable equal in
distribution to $l^i$. We define the moment generating function
of $l$ under distribution $g$ as $M_g(z) = E_g(e^{z l})= \int \{h(x)/g(x)\}^z g(x) \mu(dx)$, which must be finite for $z\in [0,1]$ by the H\"older inequality. Furthermore, define the
rate function
$$m_g(t) = \max_z [ z t -\log\{ M_g (z)\}].$$
The following large deviations result is established by Chernoff (1952).

\begin{proposition}\label{PropChernoff}
If $-\infty < t < E_g (l)$, then $\log \Pr _g(LR_n < e^{nt}) \sim - n\times m_g(t);$
if $E_g (l) < t< \infty$, then
$\log \Pr _g(LR_n > e^{n t}) \sim - n \times m_g(t).$
\end{proposition}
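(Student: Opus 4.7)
The plan is to recognize the statement as Cram\'er's theorem applied to the i.i.d.\ random variables $l^1,\ldots,l^n$, since $\log LR_n = \sum_{i=1}^n l^i$ and $m_g$ is precisely the Legendre--Fenchel transform of the cumulant generating function $\log M_g$. I would treat the upper-tail regime $t > E_g(l)$ in detail; the lower-tail case $t < E_g(l)$ is entirely symmetric upon replacing $z$ by $-z$ and reversing inequalities. The proof splits into a Chernoff-type upper bound and a matching lower bound via exponential change of measure.

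For the upper bound, Markov's inequality gives, for any $z > 0$,
\begin{equation*}
P_g(LR_n > e^{nt}) = P_g\!\left(\sum_{i=1}^n l^i > nt\right) \leq e^{-znt}\, E_g\!\left(e^{z \sum_i l^i}\right) = e^{-n[z t - \log M_g(z)]}.
\end{equation*}
Taking the infimum over $z > 0$ yields $P_g(LR_n > e^{nt}) \leq e^{-n m_g(t)}$, hence $\limsup_n n^{-1}\log P_g(LR_n > e^{nt}) \leq -m_g(t)$. Convexity of $\log M_g$ together with $(\log M_g)'(0) = E_g(l) < t$ ensures the supremum defining $m_g(t)$ is attained at some $z^\star > 0$ in the interior of the effective domain of $\log M_g$.

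For the matching lower bound, I would exponentially tilt the distribution. Define
\begin{equation*}
\tilde g(x) = \frac{e^{z^\star [\log h(x) - \log g(x)]}\, g(x)}{M_g(z^\star)},
\end{equation*}
under which the first-order condition $E_{\tilde g}(l) = M_g'(z^\star)/M_g(z^\star) = t$ holds. For any $\epsilon > 0$, rewriting the probability as an integral and inserting the likelihood ratio gives
\begin{equation*}
P_g\!\left(\sum_i l^i \in [nt,\, n(t+\epsilon)]\right) \;\geq\; e^{-n[z^\star(t+\epsilon) - \log M_g(z^\star)]}\, P_{\tilde g}\!\left(\sum_i l^i \in [nt,\, n(t+\epsilon)]\right),
\end{equation*}
where I used $e^{-z^\star \sum l^i} \geq e^{-z^\star n(t+\epsilon)}$ on the indicated event. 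Under $\tilde g$ each $l^i$ has mean $t$, so the central limit theorem gives $P_{\tilde g}(\sum_i l^i \in [nt,\, n(t+\epsilon)]) \to \tfrac12$. Therefore $\liminf_n n^{-1}\log P_g(LR_n > e^{nt}) \geq -m_g(t) - z^\star \epsilon$, and sending $\epsilon\downarrow 0$ yields the matching bound.

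The main technical obstacle is verifying that the maximizer $z^\star$ lies in the interior of the effective domain of $\log M_g$, so that $\tilde g$ is a well-defined probability density and the first-order optimality condition applies. Under the paper's standing assumptions (mutual absolute continuity of $g$ and $h$, so $l$ is almost surely finite, and finiteness of $M_g$ on $[0,1]$ by H\"older), standard convexity arguments extend $\log M_g$ to a neighborhood of any $z^\star$ corresponding to $t$ in the open range specified, so Cram\'er's theory goes through without modification. The lower-tail case is handled identically with $z^\star < 0$.
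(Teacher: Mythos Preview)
Your argument is correct and is essentially the classical proof of Cram\'er's theorem (equivalently, Chernoff's large deviations bound): the Chernoff--Markov upper bound combined with an exponentially tilted lower bound. The only minor caveats are the usual ones you already flag---that the maximizer $z^\star$ must lie in the interior of the effective domain so that the tilted law $\tilde g$ is well defined and has finite variance, which holds here since $M_g$ is finite on $[0,1]$ and the relevant $t$ lies strictly between $E_g(l)$ and $E_h(l)$.

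As for comparison with the paper: there is nothing to compare. The paper does not prove this proposition; it simply cites it as ``established by Chernoff (1952)'' and uses it as a black box throughout. Your write-up supplies exactly the proof Chernoff gave, so it is consistent with---indeed fills in---what the paper takes for granted.
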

We write $a_n\sim b_n$ if $a_n/b_n\to 1$ as $n\to \infty$.
The above proposition provides an asymptotic decay rate of
the type I error probability:  for any $t> E_g(l)$
$$\Pr _g(LR_n > e^{n t})= e^{-\{1+o(1)\}n \times m_g(t)}\quad \mbox{as $n\to\infty$}.$$
%A completely analogous result can be derived for the alternative distribution $h$, $m_h(t)$, and the type II error probability.
Similarly, we switch the roles of $g$ and $h$ and define $M_h(z)$ and $m_h(t)$ by flipping the sign of the log-likelihood ratio $l=\log g(X) - \log h(X)$ and computing the expectations under $h$.
One further defines
$\rho(t) = \min\{m_g(t),m_h(-t)\}$
that is the slower rate among the type I and type II error probabilities. A measure of  efficiency is given by
\begin{equation}\label{Chernoff}
\rho = \max_{E_g (l)< t <E_h (l)} \rho(t)
\end{equation}
that is known as the Chernoff index between $g$ and $h$.

In the decision framework, we consider the zero-one loss function
\begin{equation}\label{loss}
L(C, f, X_1,...,X_n) = \left\{
\begin{array}{ll}
1& \quad \mbox{if $f=g$ and $(X_1,...,X_n)\in C$} \\
1& \quad \mbox{if $f=h$ and $(X_1,...,X_n)\notin C$}\\
0& \quad \mbox{otherwise}
\end{array}
\right.
\end{equation}
where $C\subset R^n$ and $f$ is a density function.
Then, the risk function is
\begin{equation}\label{risk}
R(C,f) = E_f\{L(C, f, X_1,...,X_n)\} =
\left\{\begin{array}{ll}
\Pr _g(C) & \mbox{\quad if $f=g$}\\
\Pr _h(C^c) & \mbox{\quad if $f=h$}\\
\end{array}\right. .
\end{equation}
The Chernoff index is the asymptotic exponential decay rate of the minimax risk $\min_C \max_f R(C,f)$ within the family of tests.
In the following section, we will generalize the Chernoff efficiency following the minimaxity definition.

Using the fact that $M_g(z) = M_h(1-z)$, one can show that the optimization in \eqref{Chernoff} is solved at $t=0$ and
\begin{equation}\label{rho}
\rho= \rho(0).
\end{equation}
Both $m_g(t)$ and $m_h(-t)$ are monotone functions of $t$ and \eqref{rho} suggests that $\rho=m_g(0)= m_h(0)$.
To achieve the Chernoff index, we reject the null hypothesis if the likelihood ratio statistic is greater than 1 and the type I and type II error probabilities have identical exponential decay rate $\rho$.

%In the language of the decision theory, the rate $\rho$ is the decay rate of the minimax risk associated to the zero-one loss function.
%This is a very important feature of the Chernoff index and is the basis of our further analysis.

\begin{figure}[htb]
\begin{center}
\includegraphics[scale = .3]{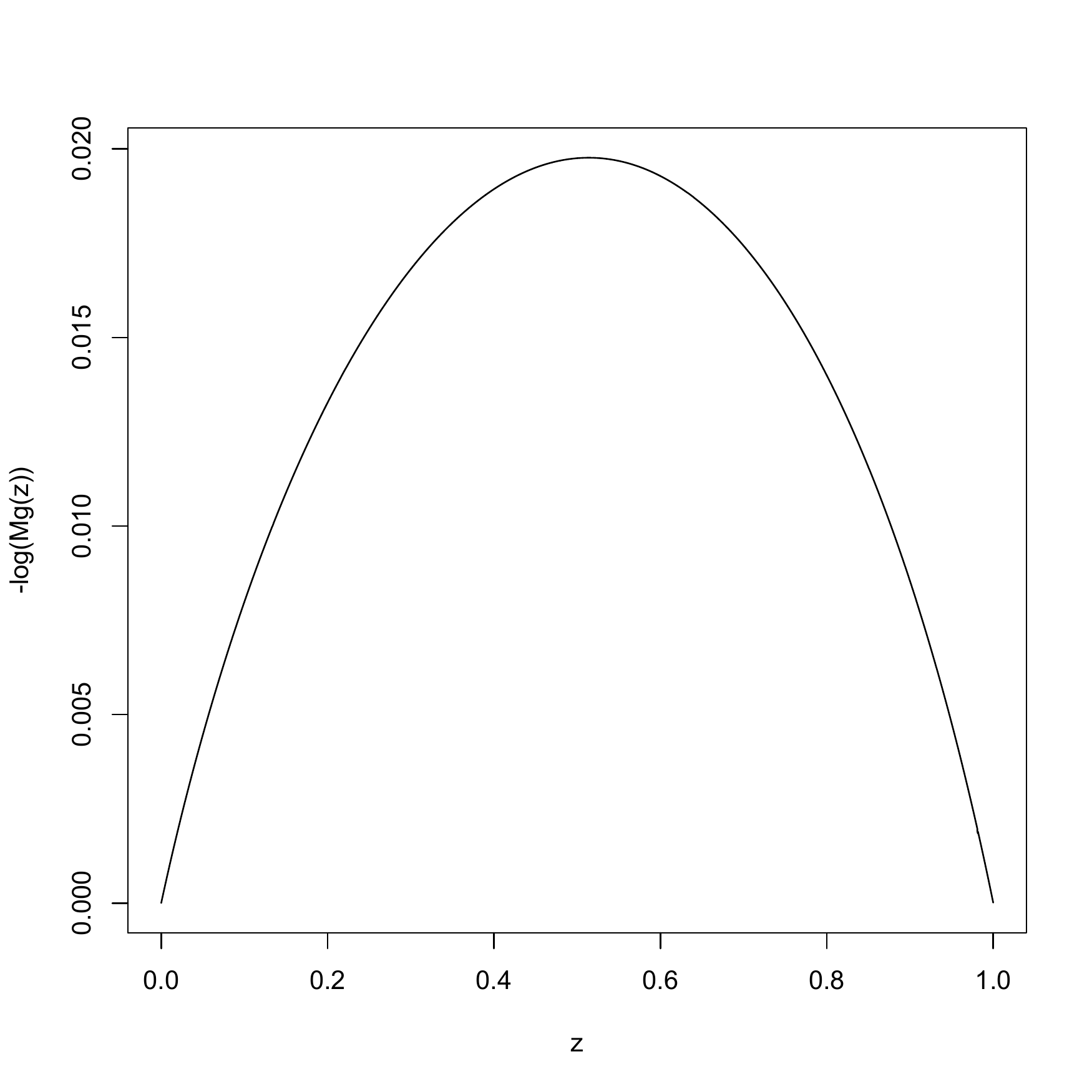}
\caption{Plot of $-\log\{M_g(z)\}$ ($y$-coordinate) against $z$ ($x$-coordinate) for the example of lognormal distribution versus exponential distribution\label{FigL}}
%, where $\theta=1.28$ and $\gamma=1.72$.}
\end{center}
\end{figure}

To have a more concrete idea of the above calculations, Figure \ref{FigL} shows one particular  $-\log\{M_g(z)\}$ as a function of $z$ where $g(x)$ is a lognormal distribution and $h(x)$ is an exponential distribution. There are several useful facts. First, $-\log\{M_g(z)\}$ is a concave function of $z$ and $-\log\{M_g(0)\}=-\log\{M_g(1)\}=0$. The maximization $\max_z [ z t -\log\{ M_g (z)\}]$ is solved at $d\log\{ M_g (z)\}/dz = t$. Furthermore, the Chernoff index is achieved at $t=0$. We insert $t=0$ into the maximization and the Chernoff index is $\rho = \max_z [-\log\{ M_g (z)\}]$.

\subsection{Generalized Chernoff index for testing composite hypothesis}

In this subsection, we develop the corresponding results for testing composite hypotheses. Some technical conditions are required as follows.

\begin{itemize}
\item[A1] Complete separation: $\min_{\theta \in \Theta, \gamma
\in \Gamma} E_{g_\theta} \{\log g_\theta(X) - \log h_\gamma(X)\}
>0.$
\item[A2] The parameter spaces $\Theta$ and $\Gamma$ are compact
subsets of $R^{d_g}$ and $R^{d_h}$ with continuously differentiable boundary $\partial \Theta$ and $\partial \Gamma$, respectively. 
\item[A3] Define $l_{\theta\gamma}  = \log h_\gamma(X) - \log g_\theta(X)$, $S_1 =\sup_{\theta,\gamma}|\nabla_\theta l_{\theta\gamma}|$, and $S_2 = \sup_{\theta,\gamma}|\nabla_\gamma l_{\theta\gamma}|.$
%We impose the following moment conditions on $S_1$ and $S_2$.
There exists some $\eta,x_0>0$, that are independent with $\theta$ and $\gamma$, such that for $x> x_0$
\begin{equation}\label{cond}
\sup_{\theta\in\Theta,\gamma\in\Gamma}\max \{\Pr _{g_\theta}(S_i > x)  , \Pr _{h_\gamma}(S_i > x)\} \leq e^{-(\log x)^{1+\eta}}, \quad (i=1,2).
\end{equation}

\end{itemize}

\begin{remark}
Condition A3 requires certain tail conditions of $S_i$.
It excludes some singularity cases. 
This condition is satisfied by most parametric families. For instance, if 
$g_\theta (x) = g_0(x)e^{\theta x - \varphi_g(\theta)}$ and $h_\gamma = h_0(x)e^{\gamma x - \varphi_h(\gamma)}$ are exponential families, then 
$$|\nabla_\theta l_{\theta\gamma}| = |x - \varphi_g'(\theta)| \leq |x| + O(1).$$
Thus \eqref{cond} is satisfied if $|x|$ has a finite moment generating function.

If $g_{\theta} = g(x - \theta)$ is the scale family, 
then $$|\nabla_\theta l_{\theta\gamma}| = \Big|\frac{ g'(x - \theta)}{g(x-\theta)}\Big|$$ that usually has finite moment generating function for light-tailed distributions (Gaussian, exponential, etc) and is usually bounded for heavy-tailed distributions (e.g.~$t$-distribution). Similarly, one may verify \eqref{cond} for scale families. Thus, A3 is a weak condition and is applicable to most parametric families practically in use. 
%{A further discussion on this condition is provided in Section \ref{SecNonC}.}
\end{remark}

We start the discussion  for a simple null hypothesis against a composite alternative hypothesis
\begin{equation}\label{SC}
H_0:~f=g \quad \mbox {and}\quad H_1:~f\in \{h_\gamma: \gamma\in \Gamma\}.
\end{equation}
In this case, the likelihood ratio takes the following form
\begin{equation}\label{LRSC}
LR_n=\frac {\max_{\gamma\in\Gamma} \prod_{i=1}^nh_\gamma(X_i)}{\prod_{i=1}^ng(X_i)}.
\end{equation}
For each distribution $h_\gamma $ in the alternative family, we
define $\rho_\gamma$ to be the Chernoff index of the likelihood
ratio test for $H_0:~f=g$ against $H_1:~f=h_\gamma$, whose form is
given as in \eqref{Chernoff}. The first result is given as
follows.

\begin{lemma}\label{ThmSimpleComposite}
Consider the hypothesis testing problem given as in \eqref{SC} and
the generalized likelihood ratio test with rejection region
$C_\lambda = \{(x_1,...,x_n): LR_n > \lambda\}$ where $LR_n$ is given by
\eqref{LRSC}. If conditions A1-3 are satisfied and  we choose
$\lambda = 1$, then the asymptotic decay rate of the  type I and
maximal type II error probabilities are identical, more precisely,
$$\log \Pr _g( C_1)\sim \sup_{\gamma\in \Gamma}\log  \Pr _{h_\gamma} (C_1^c) \sim -n \times\min_\gamma \rho_\gamma.$$
\end{lemma}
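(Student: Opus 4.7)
The plan is to decompose the statement into four matching inequalities and invoke Proposition~1 (the simple-vs-simple Chernoff rate) pointwise in $\gamma$, with uniformity supplied by the gradient envelopes in condition A3. Write $\rho=\min_\gamma\rho_\gamma$; compactness of $\Gamma$ from A2 together with continuity of $\gamma\mapsto\rho_\gamma$ guarantees a minimizer $\gamma^\star\in\Gamma$.

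\textbf{Easy directions.} The type I lower bound is immediate: $LR_n\geq\prod_i h_{\gamma^\star}(X_i)/g(X_i)$ pointwise, so $P_g(LR_n>1)\geq P_g(\prod_i h_{\gamma^\star}/g>1)=e^{-n\rho(1+o(1))}$ by Proposition~1. The type II upper bound is equally direct: the same inclusion yields $P_{h_\gamma}(LR_n\leq 1)\leq P_{h_\gamma}(\prod_i h_\gamma/g\leq 1)\leq e^{-n\rho_\gamma(1-o(1))}$ for each $\gamma$, and the supremum is handled using compactness of $\Gamma$ and continuity of $\gamma\mapsto\rho_\gamma$.

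\textbf{Type I upper bound.} I would use a $\delta$-net argument. Cover $\Gamma$ by $N_n=O(n^{cd_h})$ balls of radius $\epsilon_n=n^{-c}$ at centers $\gamma_1,\dots,\gamma_{N_n}$, and bound $\sup_{\gamma\in B_k}\sum_i\log h_\gamma(X_i)\leq \sum_i\log h_{\gamma_k}(X_i)+\epsilon_n\sum_i S_2(X_i)$ using the gradient envelope $S_2$ from A3. A union bound reduces the task to $N_n$ simple-vs-simple computations. Proposition~1 handles each center, while the gradient perturbation, controlled after truncation by the tail bound in A3, is absorbed into an $o(1)$ rate correction. The polynomial factor $N_n$ contributes only $O(\log n)=o(n)$ to the log-probability, so the rate is preserved.

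\textbf{Type II lower bound (the main obstacle).} Since $\sup_\gamma P_{h_\gamma}(C_1^c)\geq P_{h_{\gamma^\star}}(LR_n\leq 1)$, it suffices to show $P_{h_{\gamma^\star}}(LR_n\leq 1)\geq e^{-n\rho(1+o(1))}$. This is delicate because $\{LR_n\leq 1\}$ is strictly smaller than $\{\prod_i h_{\gamma^\star}/g\leq 1\}$: the MLE $\hat\gamma$ may push the supremum above one even when the $\gamma^\star$-slice does not, so a naive application of Proposition~1 yields only the upper bound. My plan is to tilt to the Chernoff-optimal measure $q^\star\propto g^{1-z^\star}h_{\gamma^\star}^{z^\star}$, where $z^\star$ is the optimizer in the definition of $\rho_{\gamma^\star}$; under $q^\star$, $\sum_i\log(h_{\gamma^\star}/g)(X_i)$ is a centered random walk. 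The change-of-measure identity rewrites $P_{h_{\gamma^\star}}(LR_n\leq 1)$ as $e^{-n\rho(1+o(1))}$ times the $q^\star$-probability of $\{LR_n\leq 1\}$ intersected with a CLT-scale downward event for $\sum_i\log(h_{\gamma^\star}/g)(X_i)$. To bound this $q^\star$-probability below by a constant, I would combine (i) the CLT under $q^\star$ for $\sum_i\log(h_{\gamma^\star}/g)(X_i)$, (ii) a Wilks-type quadratic expansion showing $\max_\gamma\sum_i\log h_\gamma(X_i)-\sum_i\log h_{\gamma^\star}(X_i)=O_{q^\star}(1)$, and (iii) concentration of the MLE $\hat\gamma$ near $\gamma^\star$ on the favorable event. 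The crux is to make (ii)--(iii) work under the tilted measure $q^\star$ rather than under $h_{\gamma^\star}$; the light tails imposed by A3 on the gradients $S_1,S_2$ are precisely what allow the local quadratic expansion of the likelihood to hold with the required $q^\star$-probability, rather than only under the original distribution.
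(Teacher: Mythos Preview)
Your easy directions match the paper's. The two substantive differences are in the type I upper bound and the type II lower bound.

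\textbf{Type I upper bound.} Your $\delta$-net plus union bound is a valid alternative. The paper instead splits off the same gradient-tail piece via A3 but handles the main piece by a random-field change of measure (its Lemma~8): one introduces the path-space measure $dQ_\zeta/dP\propto \mathrm{mes}\{\gamma:\eta_n(\gamma)>\zeta\}$ with $\eta_n(\gamma)=\sum_i l^i_\gamma$ and $\zeta=-1$, and observes that on the event where the gradient is bounded by $e^{n^{1-\beta}}$ the excursion set has Lebesgue measure at least $\varepsilon_0 e^{-d_h n^{1-\beta}}$, so the Radon--Nikodym derivative is controlled and the rate is inherited from $\int_\Gamma P\{\eta_n(\gamma)>\zeta\}\,d\gamma$. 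Both arguments work; yours is more elementary, the paper's avoids choosing a net radius.

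\textbf{Type II lower bound.} Here the paper bypasses your tilting program entirely with a one-line Neyman--Pearson argument. Having already proved $P_g(C_1)\cong e^{-n\rho}$, choose $\lambda_n$ so that $P_g(\sum_i l^i_{\gamma^\star}>n\lambda_n)=P_g(C_1)$; continuity of the rate function forces $\lambda_n\to 0^{-}$. Since $\{\sum_i l^i_{\gamma^\star}>n\lambda_n\}$ is the UMP test for $g$ versus $h_{\gamma^\star}$ at that size, the generalized test $C_1$ has at least as large a type II error at $h_{\gamma^\star}$:
\[
P_{h_{\gamma^\star}}(C_1^c)\ \ge\ P_{h_{\gamma^\star}}\Bigl(\textstyle\sum_i l^i_{\gamma^\star}\le n\lambda_n\Bigr)\ \cong\ e^{-n\rho}.
\]
This is much shorter than your route and, crucially, requires nothing about the behavior of $\hat\gamma$ under the tilted measure.

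Your tilting approach is in the right spirit---it is essentially what the paper does for the harder non-separated Theorem~5---but step (ii)/(iii) hides a nontrivial claim: you need $\gamma^\star$ to be the \emph{global} maximizer of $E_{q^\star}[\log h_\gamma(X)]$ so that $\hat\gamma\to\gamma^\star$ under $q^\star$. The first-order condition $\nabla_\gamma\rho_\gamma|_{\gamma^\star}=0$ does yield $E_{q^\star}[\nabla_\gamma\log h_{\gamma^\star}]=0$, so $\gamma^\star$ is a critical point, but global uniqueness is not guaranteed by A1--A3 alone; in the paper this is exactly the role of the extra assumption A6 and Lemma~9 in the non-separated case. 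The Neyman--Pearson trick sidesteps this issue completely for the present lemma.
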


For  composite null versus composite alternative
\begin{equation}\label{CC}
H_0:~f\in\{g_\theta: \theta\in \Theta\} \quad \mbox {against}\quad H_1:~f\in \{h_\gamma: \gamma\in \Gamma\}
\end{equation}
similar results can be obtained.
The generalized likelihood ratio statistic is given by \eqref{glr}. For each single pair $(g_\theta,h_\gamma)$, we let $\rho_{\theta \gamma}$  denote the corresponding Chernoff index of the likelihood ratio test for $H_0: f=g_\theta$ and $H_1: f= h_\gamma$. The following theorem states the main result.

\begin{theorem}\label{ThmCC}
Consider a composite null hypothesis  against a composite alternative hypothesis given as in \eqref{CC} and the generalized likelihood ratio test with rejection region $C_\lambda = \{(x_1,...,x_n): LR_n > \lambda\}$ where $LR_n$ is given by \eqref{glr}. If conditions A1-3 are satisfied and we choose $\lambda = 1$, then the asymptotic decay rate of the maximal type I and type II error probabilities are identical, more precisely,
\begin{equation}\label{riskcc}
\sup _{\theta\in \Theta} \log \Pr _{g_\theta}( C_1)\sim \sup_{\gamma\in \Gamma}\log  \Pr _{h_\gamma} (C_1^c) \sim -n \times\min_{\theta\in \Theta,\gamma\in \Gamma} \rho_{\theta\gamma}.\end{equation}
\end{theorem}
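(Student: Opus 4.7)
The proof establishes matching upper and lower bounds on both $\sup_\theta P_{g_\theta}(C_1)$ and $\sup_\gamma P_{h_\gamma}(C_1^c)$, each of exponential rate $\rho^\star:=\min_{\theta,\gamma}\rho_{\theta\gamma}$.

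For the upper bound, note that for every $\theta_0\in\Theta$ the trivial inequality $l_g(\hat\theta)\ge l_g(\theta_0)$ gives the event inclusion $\{LR_n>1\}\subseteq\{l_h(\hat\gamma)>l_g(\theta_0)\}$. The right-hand side is the rejection region of the simple-null-versus-composite-alternative test with simple null $g_{\theta_0}$, so Lemma \ref{ThmSimpleComposite} yields $\log P_{g_{\theta_0}}(C_1)\le-n\min_\gamma\rho_{\theta_0\gamma}(1+o(1))$. Passing $\sup_{\theta_0}$ inside this bound requires uniformity of the $o(1)$ term; this is obtained by revisiting the proof of Lemma \ref{ThmSimpleComposite} and using that the tail bound \eqref{cond} in condition A3 is already uniform in the index parameter, together with continuity of $\theta_0\mapsto\min_\gamma\rho_{\theta_0\gamma}$ on the compact set $\Theta$ (A2). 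A symmetric argument obtained by swapping the roles of $(g,\theta)$ and $(h,\gamma)$ delivers the matching upper bound on $\sup_\gamma P_{h_\gamma}(C_1^c)$.

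For the lower bound it is enough to exhibit a single pair $(\theta^\star,\gamma^\star)$ with $P_{g_{\theta^\star}}(C_1)\ge\exp\{-n\rho^\star(1+o(1))\}$. Choose a minimizer of $\rho_{\theta\gamma}$ and assume for now that it is interior, let $z^\star\in(0,1)$ be the Chernoff tilt achieving $\rho^\star=-\log M_{g_{\theta^\star}}(z^\star)$, and let $\tilde P$ denote the tilted law whose density is proportional to $g_{\theta^\star}^{1-z^\star}h_{\gamma^\star}^{z^\star}$. The interior stationarity conditions $\partial_\theta\rho=\partial_\gamma\rho=0$ at $(\theta^\star,\gamma^\star)$ translate, via the envelope theorem, into $E_{\tilde P}[\nabla_\theta\log g_\theta(X)]|_{\theta^\star}=0$ and $E_{\tilde P}[\nabla_\gamma\log h_\gamma(X)]|_{\gamma^\star}=0$, so $(\theta^\star,\gamma^\star)$ are precisely the population MLEs of the two families under $\tilde P$. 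Wilks-type expansions then give $l_g(\hat\theta)-l_g(\theta^\star)=O_{\tilde P}(1)$ and $l_h(\hat\gamma)-l_h(\gamma^\star)=O_{\tilde P}(1)$. Meanwhile $W:=l_h(\gamma^\star)-l_g(\theta^\star)$ has mean zero and variance of order $n$ under $\tilde P$, so the CLT gives $\tilde P(W\ge n^{1/2},\ |W|\le n^{3/4})\ge c>0$; on this event $l_h(\hat\gamma)-l_g(\hat\theta)\ge W-O_{\tilde P}(1)>0$ with $\tilde P$-probability bounded below. Reversing the change of measure and noting that on $\{|W|\le n^{3/4}\}$ the Radon-Nikodym derivative $dP_{g_{\theta^\star}}^{\otimes n}/d\tilde P^{\otimes n}=M_{g_{\theta^\star}}(z^\star)^n e^{-z^\star W}$ equals $\exp\{-n\rho^\star(1+o(1))\}$ yields $P_{g_{\theta^\star}}(C_1)\ge\exp\{-n\rho^\star(1+o(1))\}$. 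The analogous construction with $g$ and $h$ exchanged handles $\sup_\gamma P_{h_\gamma}(C_1^c)$.

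The main technical hurdle is the uniform MLE control under $\tilde P$: the $O_{\tilde P}(1)$ Wilks bounds rest on ruling out large excursions of the random fields $\theta\mapsto l_g(\theta)-l_g(\theta^\star)$ and $\gamma\mapsto l_h(\gamma)-l_h(\gamma^\star)$ over the compact parameter spaces. This is handled by an $\varepsilon$-net chaining argument using the gradient tail bound \eqref{cond} of A3 transported to $\tilde P$ (which has a bounded Radon-Nikodym derivative with respect to $g_{\theta^\star}$ on bounded sets), and is precisely the ``random field and non-exponential change of measure'' technique highlighted in the introduction. If the minimizer of $\rho_{\theta\gamma}$ lies on the boundary of $\Theta\times\Gamma$, the smooth boundary hypothesis in A2 enables a perturbation that pushes it slightly into the interior at the cost of worsening the rate by an arbitrarily small amount, reducing the analysis to the interior case treated above.
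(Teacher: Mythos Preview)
Your upper bound is essentially the paper's: both reduce $P_{g_{\theta_0}}(C_1)$ to the simple-null-versus-composite-alternative probability and invoke Lemma~\ref{ThmSimpleComposite} (equivalently Lemma~\ref{LemRF}). Fine.

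The lower bound, however, takes a genuinely different route from the paper, and as written it has a gap. The paper's argument is short: by Lemma~\ref{ThmSimpleComposite} (whose lower bound is established via the Neyman--Pearson lemma, not by tilting), one already has $P_{h_{\gamma(\theta)}}(k_\theta<0)\cong e^{-n\rho_{\theta\gamma(\theta)}}$ for $k_\theta=\sup_\gamma\sum_i l^i_{\theta\gamma}$; then the trivial monotonicity $\inf_\theta k_\theta\le k_{\theta_*}$ gives $P_{h_{\gamma_*}}(l<0)\ge P_{h_{\gamma_*}}(k_{\theta_*}<0)\cong e^{-n\rho^\star}$, and symmetry finishes. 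No analysis of the MLE under any tilted law is needed.

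Your tilting argument, by contrast, mirrors the paper's proof of Theorem~\ref{thm:error-under-theta0} (the non-separated case), and it inherits that proof's need for extra structure. The step ``Wilks-type expansions then give $l_g(\hat\theta)-l_g(\theta^\star)=O_{\tilde P}(1)$'' requires that $\theta^\star$ be the \emph{global} maximizer of $\theta\mapsto E_{\tilde P}[\log g_\theta(X)]$, not merely a critical point. Your envelope-theorem computation correctly shows $E_{\tilde P}[\nabla_\theta\log g_{\theta^\star}(X)]=0$, but that is only a first-order condition; if $E_{\tilde P}[\log g_\theta(X)]$ has another, higher local maximum at some $\bar\theta\neq\theta^\star$, then $l_g(\hat\theta)-l_g(\theta^\star)$ is of order $n$ under $\tilde P$ and your CLT window $|W|\le n^{3/4}$ cannot absorb it. This is exactly the obstruction the paper handles in Theorem~\ref{thm:error-under-theta0} by imposing Assumption~A6 (uniqueness of the Euler-condition solution under $Q^\dagger$) and proving Lemma~\ref{lemma:consistent}; neither A6 nor any analogue of it is among A1--A3. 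The $\varepsilon$-net chaining you propose controls oscillations of the random field but does not locate its maximizer, so it does not close this gap. The paper's Neyman--Pearson route sidesteps the issue entirely and is both shorter and assumption-free.
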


%Then, the log-probabilities in \eqref{riskcc} is the logarithm of the minimax risk.
We call
$$\rho= \min_{\theta,\gamma} \rho_{\theta\gamma}$$
the generalized Chernoff index between the two families $\{g_\theta\}$ and $\{h_\gamma\}$ that is the exponential decay rate of the maximal type I and type II error probabilities for the generalized likelihood ratio test.
We would like to make a few remarks.
Suppose that $\rho_{\theta\gamma}$ is minimized at $\theta_*$ and $\gamma_*$.
The maximal type I and type II error probabilities of $C_1$ have identical exponential decay rate as that of the error probabilities of the likelihood ratio test for the simple null $H_0: f=g_{\theta_*}$ versus simple alternative $H_1: f=h_{\gamma_*}$ problem.
Then, according to the Neyman-Pearson lemma, we have the following statement.
Among all the tests for \eqref{CC} that admit maximal type I error probabilities that decays exponentially at least at rate $\rho$, their maximal type II error probabilities decay at most at rate $\rho$.
This asymptotic efficiency can only be obtained at the particular threshold $\lambda=1$, at which the maximal type I and the type II error probabilities decay exponentially equally fast.
Consider the loss function as in \eqref{loss} and the risk function is
\begin{equation}\label{risk}
R(C,f) =  \left\{\begin{array}{ll}
\Pr _f(C) & \mbox{\quad if $f\in \{g_\theta: \theta\in \Theta\}$}\\
\Pr _f(C^c) & \mbox{\quad if $f\in \{h_\gamma: \gamma \in \Gamma\}$}\\
\end{array}\right. .
\end{equation}
According to the above discussion, the maximum risk of the rejection region $C_1=\{LR_n > 1\}$ achieves the same asymptotic decay rate as that of the minimax risk that is
\begin{equation*}
\min_{C\subset R^n}\max_{f\in \{g_\theta\}\cup \{h_\gamma\}}\frac{   \log \{R(C,f)\}} n \to - \rho.
\end{equation*}

Upon considering the exponential decay rate of the two types of
error probabilities, one can simply reduce the problem to  testing
$H_0: f=g_{\theta_*}$ against $H_1: f=h_{\gamma_*}$. Each of these two
distributions can be viewed as the least favorable distribution if
its own family is chosen to be the null family.
The results in Lemma \ref{ThmSimpleComposite} and Theorem \ref{ThmCC} along with their proofs suggest that the maximal type I and  type II error probabilities are achieved at $f=g_{\theta_*}$ and $f=h_{\gamma_*}$.
In addition, under the distribution $g_{\theta_*}$
and conditional on the event $C_1$, in which $H_0$ is rejected,
the maximum likelihood estimator $\hat \gamma$ converges to
$\gamma_*$; vice versa, under the distribution $f=h_{\gamma_*}$, if
$H_0$ is not rejected, the maximum likelihood estimator $\hat
\theta$ converges to $\theta_*$.

\subsection{Relaxation of the technical conditions}\label{SecNonC}

The results of Lemma \ref{ThmSimpleComposite} and Theorem \ref{ThmCC} require three  technical conditions. Condition A1 ensures that the two families are separated and it is crucial for the exponential decay of the error probabilities. Condition A2, though important for the proof, can be relaxed for most parametric families. They can be replaced by certain localization conditions for the maximum likelihood estimator. We present one as follows.

\begin{itemize}
\item [A4]
There exist parameter-dependent compact sets $A_\theta, \tilde A_\gamma\subset \Gamma$ and $B_\gamma, \tilde B_\theta\subset \Theta$ such that for all $\theta$ and $\gamma$
\begin{eqnarray}\label{weak}
\liminf _{n\to \infty}\frac 1 n \log \Pr _{g_\theta}(\hat \theta \in \tilde B_\theta^c ~ or ~ \hat \gamma \in A_\theta^c) <-\rho, \\ \liminf _{n\to \infty}\frac 1 n \log \Pr _{h_\gamma}(\hat \theta \in B_\gamma^c~or~ \hat \gamma \in \tilde A_\gamma^c) < - \rho\notag
\end{eqnarray}
where $\hat \theta$ and $\hat\gamma$ are the maximum likelihood estimators under the two families.
Condition A3 is satisfied if the maximization in the definition of $S_i$ is taken on the set $A_\theta$ and $\tilde B_\theta$ when the tail is computed under $g_\theta$ and is taken on the set $\tilde A_\gamma$ and $B_\gamma$ when the tail is computed under $h_\gamma$.
\end{itemize}
\begin{remark}
%	\xc{Please pay attention to this.}
	Assumption A4 can be verified by means of large deviations of the maximum likelihood estimator; see \cite{arcones2006large}.
	Under regularity conditions, the probability that the maximum likelihood estimator deviates from the true parameter by a constant decreases exponentially. One can choose the constant large enough so that it decays at a faster rate than $\rho$ and thus Assumption 4 is satisfied.
\end{remark}

Consider the first probability in \eqref{weak} under $g_\theta$. We typically
choose $\tilde B_\theta$ to be a reasonably large compact set
containing $\theta$ and thus $\Pr _{g_\theta}(\hat \theta \in \tilde
B^c_\theta)$ decays exponentially fast at a higher rate than
$\rho$. For the choice of $A_\theta$, we first define
$$\gamma_\theta = \arg\max_{\gamma \in \Gamma} E_{g_\theta}\{\log h_\gamma (X)\}$$
that is the limit of $\hat \gamma$ under $g_\theta$. Then, we choose $A_\theta$ be a sufficiently large compact set containing $\gamma_\theta$ so that the decay rate of $\Pr _{g_\theta}(\hat \gamma \in  A^c_\theta)$ is higher than $\rho$. Similarly, we can choose $B_\gamma$ and $\tilde A_\gamma$.
Furthermore, the maximum score function for a single observation over a compact set usually has a sufficiently light tail to satisfy condition A4, for instance, $\Pr _{g_\theta}(\sup_{\theta\in \tilde B_\theta,\gamma\in A_\theta}|\nabla_\theta l_{\theta\gamma}| > x)\leq e^{-(\log x)^{1+\eta}}$.

%\jc{I still feel that I am not very specific in Condition A4. Not sure if I should specify all the details that could be quite tedious.}

\begin{corollary}\label{cor:non-compact}
Consider a composite null hypothesis against composite alternative hypothesis given as in \eqref{CC}.
Suppose that conditions A1 and A4  are satisfied.
Then, the asymptotic decay rates of the maximal type I and type II error probabilities are identical, more precisely,
$$\sup _{\theta\in \Theta} \log \Pr _{g_\theta}( C_1)\sim \sup_{\gamma\in \Gamma}\log  \Pr _{h_\gamma} (C_1^c) \sim - n \times\min_{\theta,\gamma} \rho_{\theta\gamma}.$$
\end{corollary}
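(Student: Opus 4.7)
The plan is to reduce Corollary \ref{cor:non-compact} to Theorem \ref{ThmCC} by using A4 to replace the (possibly non-compact) parameter spaces $\Theta, \Gamma$ with compact localization sets on which the generalized likelihood ratio statistic is unchanged with probability $1-o(e^{-n\rho})$.

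For the upper bound on $\sup_\theta \Pr_{g_\theta}(C_1)$, fix $\theta \in \Theta$ and let $G_\theta = \{\hat\theta \in \tilde B_\theta\} \cap \{\hat\gamma \in A_\theta\}$. Decompose
\[
\Pr_{g_\theta}(C_1) \leq \Pr_{g_\theta}(G_\theta^c) + \Pr_{g_\theta}(C_1 \cap G_\theta).
\]
By A4 the first term decays strictly faster than $e^{-n\rho}$. On $G_\theta$ the unrestricted MLEs already lie in $\tilde B_\theta \times A_\theta$, so $LR_n$ equals the likelihood ratio $LR_n^{\text{restr}}$ whose suprema are taken over these compact sets. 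The last sentence of A4 ensures that A3 is satisfied on these sets, so Theorem \ref{ThmCC} applies to the restricted problem and yields
\[
\Pr_{g_\theta}(C_1 \cap G_\theta) \leq \Pr_{g_\theta}(LR_n^{\text{restr}} > 1) \leq e^{-n\rho_\theta^{\text{restr}}(1+o(1))},
\]
where $\rho_\theta^{\text{restr}} = \min_{\theta' \in \tilde B_\theta,\, \gamma \in A_\theta} \rho_{\theta'\gamma} \geq \rho$. A symmetric argument under $h_\gamma$ using $B_\gamma, \tilde A_\gamma$ handles the type II error.

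For the matching lower bound, let $(\theta_*, \gamma_*)$ achieve $\rho = \min_{\theta,\gamma}\rho_{\theta\gamma}$ and fix any compact subsets $\Theta_0 \subset \Theta$, $\Gamma_0 \subset \Gamma$ containing $\theta_*, \gamma_*$ respectively. Since restricting the suprema in the likelihood ratio only decreases $LR_n$, $\{LR_n^{\text{restr}} > 1\} \subseteq \{LR_n > 1\}$, so
\[
\Pr_{g_{\theta_*}}(C_1) \geq \Pr_{g_{\theta_*}}(LR_n^{\text{restr}} > 1) \sim e^{-n\rho},
\]
where the last equivalence applies Theorem \ref{ThmCC} to the compact restricted problem whose minimum Chernoff index equals $\rho_{\theta_*\gamma_*}=\rho$. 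Taking $\sup_\theta$ preserves this bound from below.

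The main obstacle is uniformity of the $o(1)$ term in $\theta$ in the upper-bound step, which is needed before one can exchange $\sup_\theta$ and $\log$. Because A4 gives only pointwise localization rates and $\tilde B_\theta, A_\theta$ depend on $\theta$, there is no single restricted problem to which one applies Theorem \ref{ThmCC}. I would handle this by a separation argument: for any $\varepsilon>0$, the set $\{\theta: \rho_{\theta\gamma_\theta} \leq \rho + \varepsilon\}$ is contained in a small neighborhood of the argmin (using continuity of $\rho_{\theta\gamma}$ together with A1), and on the complement the Chernoff rate is strictly larger than $\rho$, producing a term of order $e^{-n(\rho+\varepsilon/2)}$ after the restricted Theorem \ref{ThmCC} is applied. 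A finite cover of the remaining neighborhood by localization sets then gives the required uniform $o(1)$.
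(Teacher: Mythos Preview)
Your upper bound is essentially correct, though heavier than necessary. The paper does not localize both MLEs and invoke Theorem~\ref{ThmCC} on a $\theta$-dependent restricted composite--composite problem. Instead it uses the trivial inequality $\sup_{\theta'}\sum_i \log g_{\theta'}(X_i)\ge \sum_i\log g_{\theta}(X_i)$ with the \emph{true} $\theta$, which reduces the type~I bound at $g_\theta$ to the simple--versus--composite quantity $\Pr_{g_\theta}\bigl(\sup_\gamma\sum_i l^i_{\theta\gamma}>0\bigr)$; then only $\hat\gamma$ is localized to $A_\theta$ and Lemma~\ref{ThmSimpleComposite} (not Theorem~\ref{ThmCC}) is applied on that compact set. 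This sidesteps your uniformity-in-$\theta$ worry, since the resulting bound is already uniform.

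Your lower bound, however, contains a genuine error. The assertion ``restricting the suprema in the likelihood ratio only decreases $LR_n$'' is false: restricting the supremum in the \emph{denominator} makes the denominator smaller and hence $LR_n$ larger. Consequently $\{LR_n^{\mathrm{restr}}>1\}\subseteq\{LR_n>1\}$ is not valid when $\Theta$ is restricted to $\Theta_0$, and your lower bound inequality does not follow. Patching this by localization via A4 at $\theta_*$ does not help either: on the event $G_{\theta_*}$ one has $LR_n=LR_n^{\mathrm{restr}}$ for the sets $\tilde B_{\theta_*}\times A_{\theta_*}$, but there is no reason $\gamma_*\in A_{\theta_*}$ (the set $A_{\theta_*}$ is built around $\gamma_{\theta_*}=\arg\max_\gamma E_{g_{\theta_*}}\log h_\gamma$, not around the Chernoff minimizer $\gamma_*$), so the restricted Chernoff index can be strictly larger than $\rho$ and Theorem~\ref{ThmCC} gives the wrong rate. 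The paper obtains the lower bound differently: once the upper bounds for \emph{both} the type~I and type~II errors are in hand at rate $\rho$, it invokes the Neyman--Pearson argument from the proofs of Lemma~\ref{ThmSimpleComposite} and Theorem~\ref{ThmCC} (choose $\lambda_n$ so that the simple test $\{\sum_i l^i_{\theta_*\gamma_*}>n\lambda_n\}$ matches the type~I error of $C_1$, then compare the type~II errors). That argument needs no monotonicity of $LR_n$ under restriction and no compactness of the original parameter spaces.
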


%The proof of the above corollary is similar to those of Lemma \ref{ThmSimpleComposite} and Theorem \ref{ThmCC} and therefore we omit it.

%In case of non-compact parameter spaces, we expect that the results in Theorem \ref{ThmCC} generally holds under certain regularity conditions. One set of sufficient conditions would be $\rho_{\theta \gamma} \to \infty$ as $\theta,\gamma \to \infty$. In this paper, we do not provide rigorous development on this issue.

\section{Extensions} \label{SecExt}

\subsection{ On the asymptotic behavior of Bayes factor}\label{SecBayes}

The result in Theorem \ref{ThmCC} can be further extended to the study of Bayesian model selection.
Consider the two families in \eqref{CC} each of which is endowed with a prior distribution on its own parameter space, denoted by $\phi(\theta)$ and $\varphi(\gamma)$. We use $\M$ to denote the family membership: $\M=0$ for the $g$-family and $\M=1$ for the $h$-family. Then, the Bayes factor is
\begin{equation}\label{bf}
BF=\frac{p(X_1,...,X_n|\M=1)}{p(X_1,...,X_n|\M=0)}=\frac {\int_{\gamma\in\Gamma}\varphi(\gamma) \prod_{i=1}^nh_\gamma(X_i) d \gamma}{\int_{\theta\in\Theta}\phi(\theta)\prod_{i=1}^ng_\theta(X_i)d \theta}.
\end{equation}
With a similar derivation as that of Bayesian information criterion \citep{schwarz1978estimating}, the marginalized likelihood $p(X_1,...,X_n|\M=i)$ is  the maximized likelihood multiplied by a polynomial prefactor depending on the dimension of the parameter space. Therefore, we can approximate the Bayesian factor by the generalized likelihood ratio statistic as follows
\begin{equation}\label{bayes}
\kappa^{-1} n^{-\beta}\leq \frac {BF}{LR_n} \leq \kappa n^{\beta}
\end{equation}
for some $\kappa$ and $\beta$ sufficiently large. Therefore, $\log BF = \log LR_n + O(\log n)$. Since the expectation of $\log LR_n$ is of order $n$,  the $O(\log n)$ term does not affect the exponential rate. Therefore, we have the following result.% the proof of which would be repetitive and therefore is omitted.

\begin{theorem}\label{ThmBayes}
Consider two families of distributions given as in \eqref{CC} satisfying conditions A1-3.
The prior densities $\varphi$ and $\phi$ are positive and Lipschitz continuous. We select $\M=1$ if $BF >1$ and $\M = 0$
otherwise where $BF$ is given by \eqref{bf}. Then, the asymptotic
decay rate of selecting the wrong model are identical under each
of the two families. More precisely,
\begin{eqnarray*}
&&\log \int_{\theta\in \Theta}  \Pr _{g_\theta}( BF >1) \phi(\theta)d \theta\sim \sup _{\theta\in \Theta} \log \Pr _{g_\theta}( BF>1)\\
&&~~~~~\sim \log \int_{\gamma\in \Gamma} \Pr _{h_\gamma}( BF\leq 1) \varphi(\gamma)d \gamma\sim \sup_{\gamma\in \Gamma}\log  \Pr _{h_\gamma} (BF\leq 1) \sim- n \times\min_{\theta,\gamma} \rho_{\theta\gamma}.
\end{eqnarray*}
\end{theorem}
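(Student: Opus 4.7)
The plan is to reduce the behavior of the Bayes factor to that of the generalized likelihood ratio $LR_n$ via the sandwich inequality~(\ref{bayes}) and then to invoke Theorem~\ref{ThmCC} with a polynomially perturbed threshold. To justify~(\ref{bayes}) rigorously, I would apply Laplace's method around the MLEs $\hat\theta$ and $\hat\gamma$, which lie in the interiors of the compact sets $\Theta,\Gamma$ with probability tending to one. A second-order Taylor expansion of the log-likelihoods together with Lipschitz continuity and strict positivity of the priors yields
$$\int_\Theta \phi(\theta)\prod_i g_\theta(X_i)\,d\theta = \phi(\hat\theta)(2\pi/n)^{d_g/2}|\hat I_g|^{-1/2}\bigl(1+o(1)\bigr)\max_\theta \prod_i g_\theta(X_i),$$
and analogously for the $\Gamma$-integral. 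Compactness then gives uniform upper and lower bounds on the prefactors, and taking the ratio of the two expressions produces~(\ref{bayes}) with any $\beta\ge \max(d_g,d_h)/2+1$ and a suitable $\kappa$.

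Next, (\ref{bayes}) supplies the event inclusions $\{LR_n>\kappa n^\beta\}\subseteq \{BF>1\}\subseteq \{LR_n>\kappa^{-1} n^{-\beta}\}$, so that for every $\theta\in\Theta$,
$$\Pr_{g_\theta}(LR_n>\kappa n^\beta)\le \Pr_{g_\theta}(BF>1)\le \Pr_{g_\theta}(LR_n>\kappa^{-1} n^{-\beta}).$$
The key observation is that Theorem~\ref{ThmCC} and its proof extend with no essential change to thresholds of the form $\lambda_n=e^{nt_n}$ with $t_n\to 0$, because the rate function $m_g(t)$ is continuous at $t=0$ where it attains the value $\rho$, and the change-of-measure and excursion-probability arguments behind Theorem~\ref{ThmCC} are insensitive to sub-exponential perturbations of the threshold. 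Choosing $t_n=\pm\beta\log n/n\to 0$ then yields $\sup_\theta \log \Pr_{g_\theta}(LR_n>\kappa^{\pm 1} n^{\pm\beta})\sim -n\rho$, which via the sandwich delivers the supremum equivalence $\sup_\theta \log \Pr_{g_\theta}(BF>1)\sim -n\rho$.

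For the prior-integrated probability, the upper bound is immediate since $\int_\Theta \Pr_{g_\theta}(BF>1)\phi(\theta)\,d\theta\le \sup_\theta \Pr_{g_\theta}(BF>1)$ as $\phi$ is a probability density. For the matching lower bound, I would fix $(\theta_*,\gamma_*)$ attaining $\min\rho_{\theta\gamma}$ and extract from the proof of Theorem~\ref{ThmCC} the uniform local estimate $\Pr_{g_\theta}(LR_n>\kappa n^\beta)\ge e^{-n(\rho_{\theta\gamma_*}+\epsilon)}$ for $\theta$ in some fixed neighborhood $U$ of $\theta_*$, where the map $\theta\mapsto\rho_{\theta\gamma_*}$ is continuous and equals $\rho$ at $\theta_*$. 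Since $\phi$ is continuous and strictly positive, $\phi_{\min}:=\min_{\bar U}\phi>0$, giving $\int \Pr_{g_\theta}(BF>1)\phi\,d\theta\ge \phi_{\min}|U|\, e^{-n(\rho+2\epsilon)}$ for all large $n$, which yields the matching rate after letting $\epsilon\to 0$. The symmetric argument under $h_\gamma$ for the event $\{BF\le 1\}$ produces the remaining two equivalences. The principal obstacle is establishing this uniform local lower bound $\Pr_{g_\theta}(LR_n>\kappa n^\beta)\ge e^{-n(\rho_{\theta\gamma_*}+\epsilon)}$ on a neighborhood of $\theta_*$; verifying it requires inspecting the change-of-measure construction in the proof of Theorem~\ref{ThmCC} and confirming that the tilting exponents depend continuously on $\theta$, which should follow from smoothness of the log-likelihoods under A1--A3.
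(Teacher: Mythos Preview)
Your proposal is correct and follows precisely the route the paper indicates: the paper states only that the theorem is ``an application of Theorem~\ref{ThmCC} and~\eqref{bayes}'' and omits the details, and you have supplied exactly those details---the Laplace justification of the sandwich~\eqref{bayes}, the observation that polynomial threshold perturbations $\lambda_n=e^{o(n)}$ leave the exponential rate in Theorem~\ref{ThmCC} unchanged (which is consistent with the $\zeta_n/n\to 0$ hypothesis in Lemma~\ref{LemRF}), and the neighborhood--continuity argument for the prior-integrated lower bound. There is nothing to correct.
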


The proof of the above theorem is an application of Theorem \ref{ThmCC} and \eqref{bayes} and thus we omit it. The above result does not rely on the validity of the prior distributions. Therefore, model selection  based on Bayes factor is asymptotically efficient even if the prior distribution is misspecified. That is, the Bayes factor is calculated based on the probability measures with density functions  $\varphi$ and $\phi$ that are different from the true prior probability measures under which $\theta$ and $\gamma$ are generated.

\subsection{Extensions to more than two families}

%If the extension is not merely a rehash of the two-sample case, then a new section is needed. Discuss background/motivation from the DINA model.

Suppose that there are $K$ non-overlapping families $\{g_{k, \theta_k}: \theta_k \in \Theta_k\}$ for $k=1,...,K$, among which we would like to select the true family to which the distribution $f$ belongs. Let
$$L_k(\theta_k) = \prod_{i=1}^n g_{k,\theta_k} (X_i) $$
be the likelihood of family $k$.
%Let $k_0$ be the true family that is $f\in \{g_{k_0, \theta_{k_0}}: \theta_{k_0} \in \Theta_{k_0}\}$.
A natural decision is to select the family that has the highest likelihood, that is,
\begin{equation*}
\hat k = \arg\max_{k=1,...,K}\sup_{\theta_k} L_k(\theta_k).
\end{equation*}
According to the results in Theorem \ref{ThmCC}, we obtain that
\begin{equation*}
\sup_{k,\theta_{k}}\log \Pr _{g_{k,\theta_{k}}}( \hat k \neq k) \sim {-n \rho}
\end{equation*}
where $\rho$ is the smallest generalized Chernoff indices, defined as in Theorem \ref{ThmCC}, among all the $(K-1)K/2$ pairs of families. To obtain the above limit, one simply considers each family $k$ as the null hypothesis and the union of the rest $K-1$ altogether as the alternative hypothesis.

With the same argument as in Section \ref{SecBayes}, we consider Bayesian model selection among the $K$ families each of which is endowed with a prior $\phi_k(\theta_k)$. Consider the marginalized maximum likelihood estimator
$$\hat k_B = \arg\max_{k} \int L_k(\theta_k)\phi_k (\theta_k) d \theta_k$$
that admits the same misclassification rate
\begin{equation*}
\sup_{k,\theta_{k}}\log \Pr _{g_{k,\theta_{k}}}( \hat k_B \neq k) \sim\sup_k \log \int \Pr _{g_{k,\theta_{k}}}( \hat k_B \neq k)\phi_k(\theta_k)d \theta_k\sim{-n \rho}.
\end{equation*}

\section{Results for possibly non-separated families}\label{SecNonSep}
\subsection{The asymptotic approximation of error probabilities}
In this section we extend the results to the cases when the $g$-family and the $h$-family are not necessarily  separated, that is,
\begin{equation}\label{nonsep}
\min_{\theta \in \Theta, \gamma \in \Gamma} E_{g_\theta} \{\log g_\theta(X) - \log h_\gamma(X)\} =0.
\end{equation}
%To have the testing problem well posed, we  require  $E_{g_\theta} \{\log g_\theta(X) - \log h_\gamma(X)\}>0$ for each $\theta$ and $\gamma$.
In the case of \eqref{nonsep}, the Chernoff index is trivially zero.
We instead derive the asymptotic decay rate of the following error probabilities. For some $\theta_0 \in \Theta$ such that $$\min_\gamma E_{g_{\theta_0}} \{\log g_{\theta_0}(X) - \log h_\gamma(X)\} >0,$$ we consider the type I error probability
\begin{equation}\label{eq:D-power}
P_{g_{\theta_0}}(LR_{n}>e^{nb}) \quad \mbox{as $n\to \infty$}
\end{equation}
where $LR_{n}$ is the generalized likelihood ratio statistic as in \eqref{glr}. For $b$, we require that 
\begin{equation}\label{b}
\sup_{\gamma\in\Gamma}E_{g_{\theta_0}}\{\log h_{\gamma}(X)-\log g_{\theta_0}(X)\}<b
\end{equation}
ensuring that $P_{g_{\theta_0}}(LR_{n}>e^{nb})$ eventually converges to zero.

The statement of the theorem requires the following construction.
For each $\theta$ and $\gamma$, we first define the moment generating function of $\log h_{\gamma}(X)-\log g_{\theta}(X)-b$
\begin{equation}
M_{g_{\theta_0}}(\theta,\gamma,\lambda)= E_{g_{\theta_0}}\Big[\exp\{\lambda(\log h_{\gamma}(X)-\log g_{\theta}(X)-b)\}\Big]
\end{equation}
and  consider the optimization problem 
\begin{equation}\label{eq:D-opt}
M^{\dagger}_{g_{\theta_0}}\triangleq\inf_{\theta\in\Theta}\sup_{\gamma\in\Gamma}\inf_{\lambda\in R} M_{g_{\theta_0}}(\theta,\gamma,\lambda).
\end{equation}
Under Assumption A2, there exists at least one solution to the above optimization we assume one of the solutions is 
$$
(\theta^{\dagger},\gamma^{\dagger},\lambda^{\dagger})=\arg\inf_{\theta\in\Theta}\sup_{\gamma\in\Gamma}\inf_{\lambda\in R} M_{g_{\theta_0}}(\theta,\gamma,\lambda).
$$
Furthermore, we define a measure $Q^{\dagger}$ that is absolutely continuous with respect to $P_{g_{\theta_0}}$  
\begin{equation}\label{eq:D-Qd}
\frac{dQ^{\dagger}}{dP_{g_{\theta_0}}}=\exp\Big\{\lambda^{\dagger}(\log h_{\gamma^{\dagger}}(X)-\log g_{\theta^{\dagger}}(X)-b)\Big\}/M^{\dagger}_{g_{\theta_0}}.
\end{equation}
\begin{definition}[Solid tangent cone]
For a set $A\subset R^d$ and $x\in A$, the solid tangent cone $T_xA$ is defined as
$$
T_xA=\{y\in R^d:\exists~ y_m \mbox{ and } \lambda_m \mbox{ such that } y_m\to y \mbox{, }\lambda_m\to 0 \mbox{ as } m\to\infty \mbox{, and } x+\lambda_m y_m\in A  \}.
$$
\end{definition}

If $A$ has continuously differentiable boundary and $x\in\partial A$, then $T_xA$ consists of all the vectors in $R^{d}$ that have negative inner products with the normal vector to $\partial A$ at x pointing outside of $A$; if $x$ is in the interior of $A$, then $T_xA= R^d.$
We consider the following technical conditions for the main theorem in this section.
\begin{enumerate}
\item[A5]The moment generating function $M_{g_{\theta_0}}$ is twice differentiable at $(\td,\gd,\ld)$.
%\item[A5] The optimization problem \eqref{eq:D-opt} has a unique solution $(\theta^{\dagger},\gamma^{\dagger},\lambda^{\dagger})$. In addition, the moment generating function $M_{g_{\theta_0}}$ is twice differentiable at $(\td,\gd,\ld)$.
\item[A6] Under $Q^{\dagger}$, 
the the solution to the Euler condition is unique, that is, the equation with respect to $\theta$ and $\gamma$
\begin{eqnarray}\label{eq:expected-score equation-under-Qd}
E^{\Qd} \{y^\top \nabla_{\theta}\log g_{\theta}(X) \}\leq 0 \mbox{ for all } y\in T_\theta\Theta   \\E^{Q^{\dagger}}\{y^\top \nabla_{\gamma} h_{\gamma}(X)\} \leq 0 \mbox{ for all } y\in T_{\gamma}\Gamma \notag
\end{eqnarray}
has a unique solution $(\bar{\theta},\bar{\gamma})$. In addition,
$$
E^{\Qd}\{\sup_{\theta\in\Theta} |\nabla^2_{\theta} \log g_{\theta}(X) |\}<\infty\mbox{ and }E^{\Qd}\{\sup_{\gamma\in\Gamma} |\nabla^2_{\gamma} \log h_{\gamma}(X) | \}<\infty.
$$
We also assume that under measure $Q^\dag$ as $n\to\infty$,
$$
{\sqrt{n}}(\hat{\theta}-\bar \theta)=O_{\Qd}(1)  
\mbox{ and }
{\sqrt{n}}(\hat{\gamma}-\bar \gamma) = O_{\Qd}(1),
$$  
where $\hat{\theta}$ and $\hat{\gamma}$ are the maximum likelihood estimators
$$
\hat{\theta}=\arg\sup_{\theta} \sum_{i=1}^n\log g_{\theta}(X_i)\mbox{ and } \hat{\gamma}=\arg\sup_{\gamma}\sum_{i=1}^n \log h_{\gamma}(X_i), 
$$
and a random sequence $a_n=O_{\Qd}(1)$ means it is tight under measure $Q^\dag$.

%\todo{add reference Steven G. Self and Kung-Yee Liang  1987 for boundary case}

\item[A7]
 We assume that $g_{\theta_0}$ does not belong to the closure of the family of distributions $\{h_{\gamma}:\gamma\in\Gamma\}$, that is,
$\inf_{\gamma\in\Gamma} D(g_{\theta_0}\|h_{\gamma})>0$.
%\item[A8] \jc{The parameter spaces $\Theta$ and $\Gamma$ are convex subsets of $R^{d_{g}}$ and $R^{d_{h}}$, respectively. We also assume that the boundaries $\partial \Theta$ and $\partial \Gamma$ are continuously differentiable.}
\end{enumerate}

%Assumption A5 is imposed for technical simplicity and is not essential. If there are multiple solutions, the analysis can be adapted but just more tedious.
Assumption A6  requires $n^{-1/2}$ convergence of $\hat{\theta}$ and $\hat{\gamma}$ under $\Qd$. It also requires  the local maximum of the function $E^{Q\dag}\log g_{\theta}(X)$ and $E^{Q\dag}\log h_{\gamma}(X)$ to be unique. We elaborate  the Euler condition for $\theta\in int(\Theta)$ and $\theta\in\partial \Theta$ separately. If $\theta\in int(\Theta)$, then $T_\theta\Theta= R^{d_g}$. 
The Euler condition is equivalent to  $E^{\Qd} \nabla_{\theta}\log g_{\theta}(X)=0$, which is the usual first order condition for a local maximum. If $\theta\in\partial \Theta$, then the Euler condition requires that the directional derivative of $E^{\Qd}\{\log g_{\theta}(X)\}$ along a vector pointing towards inside $\Theta$ is non-positive. 
Assumption A7 guarantees that the probability $\lim_{n\to\infty}P_{g_{\theta_0}}(LR_{n}>e^{nb})=0$ for some $b$.

\begin{theorem}\label{thm:error-under-theta0}
 Under Assumptions A2-A3 and A5-A7, for each $b$ satisfying \eqref{b}, we have
$$
\log P_{g_{\theta_0}}(LR_{n}>e^{nb})\sim -n\times \rd,
$$
where $\rd=-\log M^{\dagger}_{g_{\theta_0}}$ and $M^{\dagger}_{g_{\theta_0}}$ is defined in \eqref{eq:D-opt}.
\end{theorem}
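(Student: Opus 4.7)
My strategy is to prove matching upper and lower bounds of the form $\log P_{g_{\theta_0}}(LR_n>e^{nb})=-n\rd+o(n)$ by combining an exponential change of measure to $\Qd$ with uniform control over $\Gamma$. I would start from the reduction, implied by $l_g(\hat\theta)\geq l_g(\td)$,
\begin{equation*}
\{LR_n>e^{nb}\}\subseteq\bigl\{\textstyle\sup_{\gamma\in\Gamma}[l_h(\gamma)-l_g(\td)]>nb\bigr\},
\end{equation*}
so the target becomes an excursion probability of the random field $\gamma\mapsto l_h(\gamma)-l_g(\td)-nb$.

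\textbf{Upper bound.} I would cover $\Gamma$ with an $n^{-1/2}$-net of $N_n=O(n^{d_h/2})$ points $\{\gamma_k\}$ and, at each $\gamma_k$, apply the Chernoff bound at the $\gamma$-dependent optimal tilt $\lambda(\gamma_k)=\arg\inf_\lambda M_{g_{\theta_0}}(\td,\gamma_k,\lambda)$. The compactness in A2 and smoothness make $\lambda(\gamma_k)$ uniformly bounded over $k$, and by the saddle-value definition $\inf_\lambda M_{g_{\theta_0}}(\td,\gamma_k,\lambda)\leq\sup_\gamma\inf_\lambda M_{g_{\theta_0}}(\td,\gamma,\lambda)=M^{\dagger}_{g_{\theta_0}}$. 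Condition A3 is used to control the Lipschitz constant of $l_h(\cdot)$ so that the discretization error contributes only an $e^{o(n)}$ multiplicative factor. A union bound over the net then yields
\begin{equation*}
P_{g_{\theta_0}}(LR_n>e^{nb})\leq N_n\,(M^{\dagger}_{g_{\theta_0}})^n e^{o(n)},
\end{equation*}
giving $\limsup_n n^{-1}\log P_{g_{\theta_0}}(LR_n>e^{nb})\leq-\rd$.

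\textbf{Lower bound.} Writing $U_n=l_h(\gd)-l_g(\td)-nb$, the change of measure to $\Qd$ yields
\begin{equation*}
P_{g_{\theta_0}}(LR_n>e^{nb})=(M^{\dagger}_{g_{\theta_0}})^n E^{\Qd}\bigl[e^{-\ld U_n}\mathbf{1}(LR_n>e^{nb})\bigr].
\end{equation*}
The first-order saddle-point condition $\partial_\lambda M_{g_{\theta_0}}(\td,\gd,\ld)=0$ from A5 makes $U_n$ a mean-zero iid sum under $\Qd$; A6 forces $\hat\theta-\td=O_{\Qd}(n^{-1/2})$ and $\hat\gamma-\gd=O_{\Qd}(n^{-1/2})$, so a second-order Taylor expansion gives $l_h(\hat\gamma)-l_g(\hat\theta)-nb=U_n+O_{\Qd}(1)$. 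Combined with the CLT for $U_n/\sqrt n$, this shows the event $E_n=\{LR_n>e^{nb},\,|U_n|\leq K\sqrt n\}$ has $\Qd(E_n)\geq c>0$ for some $K$ and all large $n$, and on $E_n$, $e^{-\ld U_n}\geq e^{-|\ld|K\sqrt n}$. Therefore
\begin{equation*}
P_{g_{\theta_0}}(LR_n>e^{nb})\geq c\,(M^{\dagger}_{g_{\theta_0}})^n e^{-|\ld|K\sqrt n},
\end{equation*}
so $\liminf_n n^{-1}\log P_{g_{\theta_0}}(LR_n>e^{nb})\geq-\rd$.

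\textbf{Main obstacle.} The main difficulty lies in the upper bound: simultaneously controlling the supremum of the random log-likelihood surface over a continuous parameter space while applying a $\gamma$-dependent exponential tilt so that every term in the union bound is majorized by $M^{\dagger}_{g_{\theta_0}}$. Condition A3 is designed precisely to make the net-plus-chaining error $o(n)$ in the exponent, and A2 ensures the optimal tilts vary continuously and remain uniformly bounded; A7 guarantees $\rd>0$ so that the claimed equivalence is nontrivial.
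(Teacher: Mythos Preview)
Your overall strategy matches the paper's: reduce the upper bound to $\theta=\td$ and control the supremum in $\gamma$, and for the lower bound tilt to $\Qd$ and use a CLT for $U_n$. Two points need attention.

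For the upper bound, the paper does not discretize $\Gamma$; it applies its Lemma~4, a path-space change of measure driven by the excursion set $\{\gamma:\eta_n(\gamma)>-1\}$, after first splitting off the event $\{\sup_\gamma|\sum_i\nabla_\gamma\log h_\gamma(X_i)|\ge e^{n^{1-\beta}}\}$ and controlling it via A3. A net argument can be made to work, but your mesh $n^{-1/2}$ is too coarse under A3 alone: A3 only gives $P_{g_{\theta_0}}(S_2>x)\le e^{-(\log x)^{1+\eta}}$, which does not imply a finite moment generating function for $S_2$, so you cannot argue that $\sum_iS_2^i=O(n)$ except on an event whose complement has probability $o(e^{-n\rd})$. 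What A3 does yield is $P_{g_{\theta_0}}(\max_iS_2^i>e^{n^{1-\beta}})=o(e^{-n\rd})$ for small $\beta$; on the good event the oscillation over an $\epsilon_n$-ball is at most $ne^{n^{1-\beta}}\epsilon_n$, forcing $\epsilon_n\le e^{-n^{1-\beta}}$ and hence $N_n=e^{o(n)}$, which is still fine for the union bound.

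For the lower bound there is a genuine gap. You write ``A6 forces $\hat\theta-\td=O_{\Qd}(n^{-1/2})$,'' but A6 only asserts $\sqrt n(\hat\theta-\bar\theta)=O_{\Qd}(1)$ and $\sqrt n(\hat\gamma-\bar\gamma)=O_{\Qd}(1)$, where $(\bar\theta,\bar\gamma)$ is the unique solution of the Euler condition under $\Qd$. The identification $(\td,\gd)=(\bar\theta,\bar\gamma)$ is not assumed; the paper proves it separately (its Lemma~8) by a KKT argument on the saddle problem $\inf_\theta\sup_\gamma\inf_\lambda M_{g_{\theta_0}}$, showing that the first-order conditions at $(\td,\gd,\ld)$ reduce, after dividing by $\ld>0$ and $M^{\dagger}_{g_{\theta_0}}$, to $E^{\Qd}\nabla_\gamma\log h_{\gd}(X)=0$ and $E^{\Qd}\nabla_\theta\log g_{\td}(X)=0$ (with the tangent-cone inequalities when $\td$ or $\gd$ lies on $\partial\Theta$ or $\partial\Gamma$). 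Without this step, $\hat\theta-\td$ is only $O_{\Qd}(1)$, the linear term $(\hat\theta-\td)^\top\sum_i\nabla_\theta\log g_{\td}(X_i)$ is of order $n$ under $\Qd$, and your claimed expansion $l_h(\hat\gamma)-l_g(\hat\theta)-nb=U_n+O_{\Qd}(1)$ breaks down.
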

This theorem provides a means to approximate the type I and type II error probabilities for general parametric families.
The above results are applicable to the both cases that the two families are separated or not separated.
According to standard large deviations calculation for random walk, we have that for each $\theta\in\Theta$ and $\gamma\in\Gamma$,
$$
\log P_{g_{\theta_0}}\Big(\sum_{i=1}^n \log h_{\gamma}(X_i)-\log g_{\theta}(X_i)-nb>0\Big)\sim \inf_{\lambda}\log M_{g_{\theta_0}}(\theta,\gamma,\lambda).
$$
Theorem \ref{thm:error-under-theta0} together with the above display implies that 
\begin{eqnarray*}
\log P_{g_{\theta_0}}(LR_n>1)&\sim& \inf_{\theta}\sup_{\gamma} \log P_{g_{\theta_0}}\Big(\sum_{i=1}^n \log h_{\gamma}(X_i)-\log g_{\theta}(X_i)>nb\Big)\\
&\sim& \log P_{g_{\theta_0}}\Big(\sum_{i=1}^n \log h_{\gd}(X_i)-\log g_{\td}(X_i)>nb\Big)
\end{eqnarray*}
The exponential decay rate of the error probabilities under $g_{\theta_0}$ is the same as the exponential decay rate of the probability that $h_{\gd}$ is preferred to $g_{\td}$.

%\begin{remark}
One application of Theorem \ref{thm:error-under-theta0} is to compute the power function asymptotically. Consider the fixed type I error $\alpha$ and the critical region of the generalized likelihood ratio test is determined by the quantile of a $\chi^2$ distribution, that is $\{LR_n> e^{\lambda_\alpha}\}$ where $2\lambda_\alpha$ is the $(1-\alpha)$th quantile of the $\chi^2$ distribution. This correspond to choosing $b=o(1)$. For a given alternative distribution $h_\gamma$, one can compute the type II error probability asymptotically by means of Theorem \ref{thm:error-under-theta0} switching the role of the null and the alternative families. Thus, the power function can be computed asymptotically.
%\end{remark}

\subsection{Application to model selection in generlized linear models}
%Generalized linear models \citep{mccullagh1989generalized} are widely used for describing the associations between a family of continuous and discrete outcomes and a set of explanatory variables.

We  discuss the application of Theorem \ref{thm:error-under-theta0} on model selection for generalized linear models \citep{mccullagh1989generalized}. 
Let $Y_i$ be the response of the $i$th observation and $X^{(i)}=(X_{i1},...,X_{ip})^T$ and $Z^{(i)}=(Z_{i1},...,Z_{iq})^T$ be two sets of predictors, $i=1,...,n$.
Consider a generalized linear model with canonical link function and the true conditional distribution of $Y_i$ is 
\begin{equation}\label{true}
g_i(y_i,\beta^{0})=\exp\Big\{(\beta^{0})^T X^{(i)} y_i -b((\beta^{0})^T X^{(i)}) +c(y_i)\Big\}, \qquad i=1,2,...,n,
\end{equation}
where $f(y) = e^{c(y)}$ is the base-line density, $b(\cdot)$ is the logarithm of the moment generating function,  $\beta^{0}= (\beta_{1}^0,...,\beta_{p}^0)^T$ is the vector of true regression coefficients, and $X$ is the set of true predictors. 
Let the null hypothesis be
\begin{equation}\label{null}
H_0: g_i(y_i,\beta)=\exp\Big\{\beta^{T} X^{(i)} y_i -b(\beta^{T} X^{(i)}) +c(y_i)\Big\}, \qquad i=1,2,...,n;
\end{equation}
the alternative hypothesis is
\begin{equation}\label{alter}
H_1:  h_i(y_i,\gamma)=\exp\Big\{\gamma^{T} Z^{(i)} y_i -b(\gamma^T Z^{(i)}) +c(y_i)\Big\}, \qquad i=1,2,...,n.
\end{equation}
We further assume that $H_1$ does not contain \eqref{true}.
Conditional on the covariates $X$ and $Z$, we consider the asymptotic decay rate of the type I error probability
$$P_{\beta^{0}}(LR_n\geq1),$$
where 
$
LR_n=\frac{\sup_{\gamma}\prod_{i=1}^n h_i(Y_i,\gamma)}{\sup_{\beta}\prod_{i=1}^n g_i(Y_i,\beta)}
$ is the generalized likelihood ratio.

We present the construction of the rate function as follows.
For each $\beta\in R^p$, $\gamma\in R^q$ and $\lambda \in R$, define 
\begin{equation}\label{eq:def-widetilde-rho}
\widetilde{\rho}_n(\beta,\gamma,\lambda) = \frac{1}{n}\sum_{i=1}^n \Big\{\lambda\Big[
b(\gamma^T Z^{(i)})-b(\beta^{T}X^{(i)})\Big]+b((\beta^0)^{T}X^{(i)})- b\Big( (\beta^0)^{T}X^{(i)} +\lambda(\gamma^{T}Z^{(i)}- \beta^{T}X^{(i)}) \Big )
\Big\}.
\end{equation}
Taking derivative with respect to $\lambda$, we have
\begin{equation}\label{eq:lambda-derivative}
\frac{\partial}{\partial \lambda}\widetilde{\rho}_n(\beta,\gamma,\lambda)=
\frac{1}{n}\sum_{i=1}^n \Big\{b(\gamma^T Z^{(i)}) - b(\beta^T X^{(i)}) -   b'\Big( (\beta^0)^{T}X^{(i)} +\lambda(\gamma^{T}Z^{(i)}- \beta^{T}X^{(i)}) \Big )(\gamma^{T}Z^{(i)}- \beta^{T}X^{(i)})\Big\}.
\end{equation}
According to fact that $b(\cdot)$ is a convex function, we have
$$
\limsup_{\lambda\to+\infty} \frac{\partial}{\partial \lambda}\widetilde{\rho}_n(\beta,\gamma,\lambda)<0,
$$
if $\beta^T X^{(i)}\neq \gamma^T Z^{(i)}$ for some $i$.
Define the set  $B_n\subset R^p$ such that
$$
B_n=\{\beta: \inf_{\gamma}\frac{\partial}{\partial \lambda}\widetilde{\rho}_n(\beta,\gamma,0)\geq 0 \}.
$$
Then for each $\beta\in B_n$ and $\gamma\in R^{q}$, there is a $\lambda\geq 0$ such that 
$\frac{\partial}{\partial \lambda}\widetilde{\rho}_n(\beta,\gamma,0)= 0 $. 
Thanks to the convexity of $b$, $\beta^0\in B_n$ and thus $B_n$ is never empty.
The second derivative is
\begin{equation*}
\frac{\partial^2}{(\partial \lambda)^2}\widetilde{\rho}_n(\beta,\gamma,\lambda)=
-  \frac{1}{n} \sum_{i=1}^n  b''\Big( (\beta^0)^{T}X^{(i)} +\lambda(\gamma^{T}Z^{(i)}- \beta^{T}X^{(i)}) \Big )(\gamma^{T}Z^{(i)}- \beta^{T}X^{(i)})^2<0,
\end{equation*}
if $\beta^T X^{(i)}\neq \gamma^{T} Z^{(i)}$ for some $i$.
Therefore, there is a unique solution to the maximization $\sup_{\lambda}\widetilde{\rho}_n(\beta,\gamma,\lambda)$. We further consider the optimization 
\begin{equation}\label{eq:optim-rho}
\widetilde{\rho}_n^{\dagger}=\sup_{\beta\in B_n}\inf_{\gamma}\sup_{\lambda}\widetilde \rho_n(\beta,\gamma,\lambda).
\end{equation}
We consider the following technical conditions.
\begin{itemize}
	\item [A8] For each $n$, the solution to \eqref{eq:optim-rho} exists, denoted by $(\beta_n^{\dagger},\gamma_n^{\dagger},\lambda_n^{\dagger})$. 	
	There exists a constant $\kappa_1$ such that
	$$
	\|\beta_n^{\dagger}\|\leq \kappa_1,~ \|\gamma_n^{\dagger}\|\leq \kappa_1 \mbox{ and }\lambda_n^{\dagger}\leq \kappa_1 \mbox{ for all }n.
	$$Here, $\|\cdot\|$ is the Euclidean norm.
	
	\item [A9] There exists a constant $\delta_1>0$ such that $\inf_{\gamma}\sup_{\lambda}\widetilde{\rho}_n(\beta^{0},\gamma,\lambda)>\delta_1 \mbox{ for all } n.
	$
%	\item[A10] The function $b(\cdot)$ and $X^{(i)}$s satisfy
%	$$ \limsup_{|\beta|\to\infty,n\to\infty} \frac{1}{n}\sum_{i=1}^n[ \beta^T X^{(i)} b'((\beta^{0}) X^{(i)}) -b(\beta^T X^{(i)})]= -\infty.
%	$$
	\item[A10] There exists a constant $\kappa_2$ such that $\|X^{(i)}\|\leq \kappa_2$ and $\|Z^{(i)}\|\leq \kappa_2$ for all $i$. Additionally, there exits $\delta_2>0$ such that for all $n$  the smallest eigenvalue of $\frac{1}{n}\sum_{i=1}^n X^{(i)} X^{(i)T}$ is bounded below by $\delta_2$.
	\item[A11] For any compact set $K\subset R$, $\inf_{u\in K}b''(u)>0$. In addition, $b(\cdot)$ is four-time continuously differentiable.
\end{itemize}
Assumption A8 requires that the solution of the optimization \eqref{eq:optim-rho} does not tend to infinity as $n$ increases, which is a mild condition. In particular, if the Kullback-Leibler divergence $D(g_i(\cdot,\beta^0)| g_i(\cdot, \beta))$ tend to infinity uniformly for all $i$ as $\|\beta\|$ goes to infinity, then $B_n$ is a bounded subset of $R^p$ and $\|\beta_n^{\dagger} \|$ is also bounded. 
Similar checkable sufficient conditions can be obtained for $\gamma^\dagger_n$ and $\lambda^\dagger_n$.

%\jc{For example, if the Kullback-Leibler divergence between the distribution $h_i(\cdot,(\beta^{0}))$ and $h_i(\cdot,\beta)$ tend to infinity as $\|\beta\|$ goes to infinity, then $B_n$ has to be bounded and so does $\beta_n^{\dagger}$.
%Furthermore, $\gamma_n^{\dagger}$ is bounded means that it is easier to distinguish between the distribution $h_i(\cdot,\gamma)$ and $g_i(\cdot,\beta)$ when $\gamma$ is sufficient large compared to the case when $\gamma$ is relatively small.}
%\textbf{The notation is wrong -- $h_i(\beta)$ is not well defined... Provide a checkable sufficient condition such as $D(g_i(\cdot, \beta^0)| g_i(\cdot, \beta)) \to \infty$ as $\beta \to \infty $ etc.}

%
%Assumption A9 requires that the true underlying distribution is bounded away from the alternative family. Assumptions A10 requires $X^{(i)}$s and $Z^{(i)}$s to have a compact support. Assumption A11 requires the second and third moment of $Y_i$s exists. 
\begin{theorem}\label{thm:error-glm}
	Under Assumptions A8-A11, conditional on  the covariates $X^{(i)}$ and $Z^{(i)}$, $i=1,...,n$, we have
	$$
	\log P_{{\beta^{0}}}(LR_{n}\geq 1)\sim -n\times \widetilde{\rho}_n^{\dagger},
	$$
	where $\widetilde{\rho}_n^{\dagger}$ is defined in \eqref{eq:optim-rho}.
\end{theorem}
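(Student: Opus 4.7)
My plan is to mirror the proof of Theorem \ref{thm:error-under-theta0}, adapted from the i.i.d.\ setting to the conditionally-independent-but-not-identically-distributed regression setting of \eqref{true}--\eqref{alter}. A direct exponential-family computation gives the moment generating function identity
\[
E_{\beta^{0}}\exp\Bigl\{\lambda\bigl[\ell_h(\gamma)-\ell_g(\beta)\bigr]\Bigr\}=\exp\{-n\,\widetilde{\rho}_n(\beta,\gamma,\lambda)\},
\]
where $\ell_g(\beta)=\sum_i\log g_i(Y_i,\beta)$ and $\ell_h(\gamma)=\sum_i\log h_i(Y_i,\gamma)$. Thus $\widetilde{\rho}_n$ is the per-observation rate function averaged over $i$, and the proof reduces to sharp Chernoff-type two-sided bounds for the tail of a sum of non-identically-distributed summands, together with a Neyman--Pearson/saddle-point argument to handle the composite-vs-composite supremum structure.

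\textbf{Upper bound.}
For any $\beta^{\ast}\in B_n$, since $\sup_\beta\ell_g(\beta)\geq\ell_g(\beta^{\ast})$, the event $\{LR_n\geq 1\}$ is contained in $\{\sup_\gamma[\ell_h(\gamma)-\ell_g(\beta^{\ast})]\geq 0\}$. I first truncate the supremum to a compact region of $\gamma$'s around $\gamma_n^{\dagger}$: using the convexity of $b$ and the boundedness of covariates (A10), large-$\|\gamma\|$ values make $\sup_{\lambda\geq 0}\widetilde{\rho}_n(\beta^{\ast},\gamma,\lambda)$ blow up past $\widetilde{\rho}_n^{\dagger}$, so they contribute negligibly. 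On the truncated region I cover $\gamma$ by an $n^{-2}$-net; smoothness from A10--A11 transfers the supremum to the grid at only a $\log n$ cost. At each grid point $\gamma_j$, the Chernoff bound gives
\[
P_{\beta^{0}}\bigl(\ell_h(\gamma_j)-\ell_g(\beta^{\ast})\geq 0\bigr)\leq\exp\bigl\{-n\sup_{\lambda\geq 0}\widetilde{\rho}_n(\beta^{\ast},\gamma_j,\lambda)\bigr\},
\]
the requirement $\beta^{\ast}\in B_n$ ensuring that the supremum is attained at some $\lambda\geq 0$. Taking $\beta^{\ast}=\beta_n^{\dagger}$ (which lies in $B_n$ by A8) and the infimum over the grid yields $\log P_{\beta^{0}}(LR_n\geq 1)\leq-n\widetilde{\rho}_n^{\dagger}+O(\log n)$; since $\widetilde{\rho}_n^{\dagger}\geq\delta_1>0$ by A9, the $O(\log n)$ term is genuinely lower-order.

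\textbf{Lower bound.}
I introduce the exponential tilt
\[
\frac{dQ_n^{\dagger}}{dP_{\beta^{0}}}=\exp\bigl\{\lambda_n^{\dagger}[\ell_h(\gamma_n^{\dagger})-\ell_g(\beta_n^{\dagger})]+n\widetilde{\rho}_n^{\dagger}\bigr\}.
\]
The first-order/KKT conditions for the min--max--min in \eqref{eq:optim-rho}, translated through this tilt, yield three facts: $E^{Q_n^{\dagger}}[\ell_h(\gamma_n^{\dagger})-\ell_g(\beta_n^{\dagger})]=0$ (from $\partial_\lambda\widetilde{\rho}_n=0$); $\beta_n^{\dagger}$ is the $Q_n^{\dagger}$-population maximizer of $E^{Q_n^{\dagger}}\ell_g(\cdot)$; and $\gamma_n^{\dagger}$ is that of $E^{Q_n^{\dagger}}\ell_h(\cdot)$. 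Under $Q_n^{\dagger}$ the $Y_i$'s remain independent exponential-family variables with a uniform Fisher-information lower bound from A10, so standard non-iid MLE theory gives $\hat\beta-\beta_n^{\dagger}=O_{Q_n^{\dagger}}(n^{-1/2})$, $\hat\gamma-\gamma_n^{\dagger}=O_{Q_n^{\dagger}}(n^{-1/2})$, and a Lindeberg CLT for $T_n:=\ell_h(\gamma_n^{\dagger})-\ell_g(\beta_n^{\dagger})$. A quadratic Taylor expansion then gives $\log LR_n=T_n+O_{Q_n^{\dagger}}(1)$, from which $A_n:=\{LR_n\geq 1,\,|T_n|\leq\sqrt{n}\log n\}$ has $Q_n^{\dagger}(A_n)\geq c>0$ (essentially by the CLT since $E^{Q_n^{\dagger}}T_n=0$). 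Changing measure,
\[
P_{\beta^{0}}(LR_n\geq 1)\geq E^{Q_n^{\dagger}}\!\bigl[e^{-\lambda_n^{\dagger}T_n-n\widetilde{\rho}_n^{\dagger}}\mathbf{1}_{A_n}\bigr]\geq e^{-\kappa_1\sqrt{n}\log n-n\widetilde{\rho}_n^{\dagger}}\,Q_n^{\dagger}(A_n),
\]
using $\lambda_n^{\dagger}\leq\kappa_1$ from A8. This produces the matching lower bound $\log P_{\beta^{0}}(LR_n\geq 1)\geq-n\widetilde{\rho}_n^{\dagger}-o(n)$.

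\textbf{Main obstacle.}
The most delicate step is the uniform-in-$n$ MLE consistency and quadratic expansion under the sequence of tilted, non-i.i.d.\ measures $Q_n^{\dagger}$. Because the design $\{X^{(i)},Z^{(i)}\}$ and the tilt parameters $(\beta_n^{\dagger},\gamma_n^{\dagger},\lambda_n^{\dagger})$ all vary with $n$, the per-observation laws under $Q_n^{\dagger}$ change with $n$ and no single limiting model is available. The uniform Fisher-information lower bound from A10, the strict convexity of $b$ on compact sets from A11, and the compactness of the optimizers from A8 should be enough to push through a Lindeberg-type uniform quadratic expansion, together with the proper KKT treatment of the $\beta\in B_n$ constraint; verifying this carefully is where the bulk of the technical work will lie, and it is the analog in this non-i.i.d.\ setting of Assumption A6 in Theorem \ref{thm:error-under-theta0}.
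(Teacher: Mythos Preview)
Your proposal is essentially correct and the lower-bound argument is the same as the paper's: exponential tilt $Q_n^{\dagger}$, first-order conditions identifying $(\beta_n^{\dagger},\gamma_n^{\dagger})$ as the $Q_n^{\dagger}$-population maximizers, CLT for $T_n$, and a Taylor expansion to pass from $T_n$ to $\log LR_n$. The paper is slightly more economical in that it fixes $\gamma=\gamma_n^{\dagger}$ at the outset (using $\sup_\gamma\ell_h(\gamma)\geq\ell_h(\gamma_n^{\dagger})$), so it only has to control $\ell_g(\hat\beta)-\ell_g(\beta_n^{\dagger})$ and never needs the $\hat\gamma$-expansion; it does this via an explicit quadratic expansion of the profile function $v(\mu)=\sup_\beta u(\mu,\beta)$ at the sufficient-statistic mean $\mu^{\dagger}$ (Lemma~\ref{lemma:expand-u}), which sidesteps most of the ``uniform-in-$n$ MLE theory under changing designs'' that you flag as the main obstacle.

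The real difference is in the upper bound. Your $\varepsilon$-net/Chernoff approach works and yields $-n\widetilde{\rho}_n^{\dagger}+O(\log n)$. The paper takes a different and sharper route: because the model is an exponential family, the event $\{\sup_\gamma\ell_h(\gamma)\geq\ell_g(\beta_n^{\dagger})\}$ depends on the data only through the sufficient statistic $(\bar s_1,\bar s_2)=(n^{-1}\sum X^{(i)}Y_i,\,n^{-1}\sum Z^{(i)}Y_i)$ and can be written as $\{(\bar s_1,\bar s_2)\in A_n\}$ with $A_n$ convex. The first-order conditions for \eqref{eq:optim-rho} place the tilt mean $(s_1^{\dagger},s_2^{\dagger})=E^{Q^{\dagger}}(\bar s_1,\bar s_2)$ on $\partial A_n$ with outer normal $(-\beta_n^{\dagger},\gamma_n^{\dagger})$, so a supporting-hyperplane argument (Lemma~\ref{lemma:convex-an}) replaces $A_n$ by a half-space and the Chernoff bound then gives $P_{\beta^{0}}(LR_n\geq 1)\leq e^{-n\widetilde{\rho}_n^{\dagger}}$ exactly, with no discretization, no compactness truncation in $\gamma$, and no $O(\log n)$ loss. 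Your approach is more portable beyond exponential families; the paper's is cleaner and exploits the convex geometry that the canonical link makes available.
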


For generalized linear models, the moment generating function of likelihood ratio is
	$$
	E_{\beta^{0}}\Big\{\lambda\sum_{i=1}^n [\log h_i(Y_i,\gamma)-\log g_i(Y_i,\beta)] \Big\} = e^{-n\widetilde{\rho}_n(\beta,\gamma,\lambda)}.
	$$
Therefore, $\widetilde{\rho}_n^{\dagger}$ is a natural generalization of 
	$\rho^{\dagger}_{g_{\theta_0}}$  for the nonidentical distribution case.

Theorem \ref{thm:error-glm} provides the asymptotic rate of selecting the wrong model by maximizing the likelihood. 
%This rate also provides an asymptotic estimate of the number of alternative incorrect models among which one is able to consistently select the correct model. 
The asymptotic rate  as a function of the true regression coefficients $\beta^0$ quantifies the strength of the signals. The larger the rate is, the easier it is to  select the correct variables. 
The rate  also depends on covariates. If $Z$ is highly correlated with $X$, then the rate is small.
Overall, the rate serves as an efficiency measure of selecting the true model from families that mis-specifies the model.

%where $\lambda\in R$ solves the following equation
%\begin{equation}\label{eq:lambda-sol}
%\sum_{i=1}^n  b'( (\beta^{0})^{T}X^{(i)} +\lambda\{\gamma^{T}Z^{(i)}- \beta^{T}X^{(i)}\})(\gamma^{T}Z^{(i)}- \beta^{T}X^{(i)}) = \sum_{i=1}^n b(\gamma^T Z^{(i)}) - b(\gamma^T Z^{(i)}).
%\end{equation}
%Thanks to the definition of $b$, the derivative with respect to $\lambda$ on the left-hand-side of the above equation is positive. Therefore, there is at most one solution to \eqref{eq:lambda-sol}.
%We further consider the optimization problem for $\beta$ and $\gamma$ such that equation \eqref{eq:lambda-sol} has a solution
%
%Assume that there is at lease one solution $(\beta^{\dagger},\gamma^{\dagger})$
%to the optimization problem \eqref{eq:optim-rho}, then we have the following theorem that is in a similar form as Theorem~\ref{thm:error-under-theta0}.
%
%%Note that $H_0$ and $H_1$ are not separated if we set $\zeta_i=0$ and $\beta_{p-k+1} =... = \beta_{p} =0$. %Nonetheless, the true model is bounded away from $H_1$ as long as some $\beta_i^0$ is nonzero.
%%By setting $b=0$, 
%
%

\section{Numerical examples}\label{SecNE}

In this section, we present  numerical examples to illustrate the asymptotic behavior of the maximal type I and type II error probabilities and the sample size tends to infinity. The first one is an example of continuous distributions and the second one is an example of discrete distributions. The third one is an example of linear regression models where the null hypotheses and alternative are not separated. In these examples, we compute the error probabilities  using importance sampling corresponding to the change of measure in the proof with sufficiently large number of Monte Carlo replications to ensure that our estimates are sufficiently accurate.

\begin{example}\label{ex:lnorm-exp}
Consider the {lognormal distribution and exponential distribution.}
For $x>0$, let \begin{eqnarray*}
g_\theta(x)=\frac{1}{x(2\pi\theta)^{1/2}}e^{-\frac{(\log x)^2}{2\theta}} \quad  \Theta=(0,+\infty),\qquad
 h_\gamma(x)=\frac{1}{\gamma}e^{-\frac{x}{\gamma}} \quad  \Gamma=(0,+\infty)
\end{eqnarray*}
be the density functions of the lognormal distribution and the exponential distribution.

For each  $\theta$ and $\gamma$, we  compute $\rho_{\theta \gamma}$ numerically. Figure \ref{ex11} shows the contour plot of $\rho_{\theta,\gamma}$.  The minimum of $\rho_{\theta \gamma}$ is $0.020$ and is obtained at $(\theta^*,\gamma^*)=(1.28,1.72)$. From the theoretical analysis, the maximal type I and type II error probabilities for the   test decay at rate $e^{-n\rho_{\theta^* \gamma^*}}$.

Figure~\ref{ex12} is the  plot of the  maximal type I and type II error probabilities as a function of the sample size for the composite versus composite test
$$H_0: f\in\{g_\theta; \theta\in \Theta\}\quad  \mbox{against}\quad H_1: f\in\{h_\gamma; \gamma\in \Gamma\}$$
and simple versus simple test
$$H_0: f=g_{\theta_*}\quad  \mbox{against}\quad H_1: f= h_{\gamma_*}.$$
We also fit a straight line to the logarithm of error probabilities against the sample sizes using least squares and the slope  is  $-0.022$. This confirms the theoretical findings. The error probabilities shown in Figure~\ref{ex12} range from $7\times 10^{-5}$ to $0.12$ and the range for sample size is from $50$ to $370$.

\end{example}

\begin{figure}[htb]
\begin{center}
\includegraphics[height=3in,width=3in]{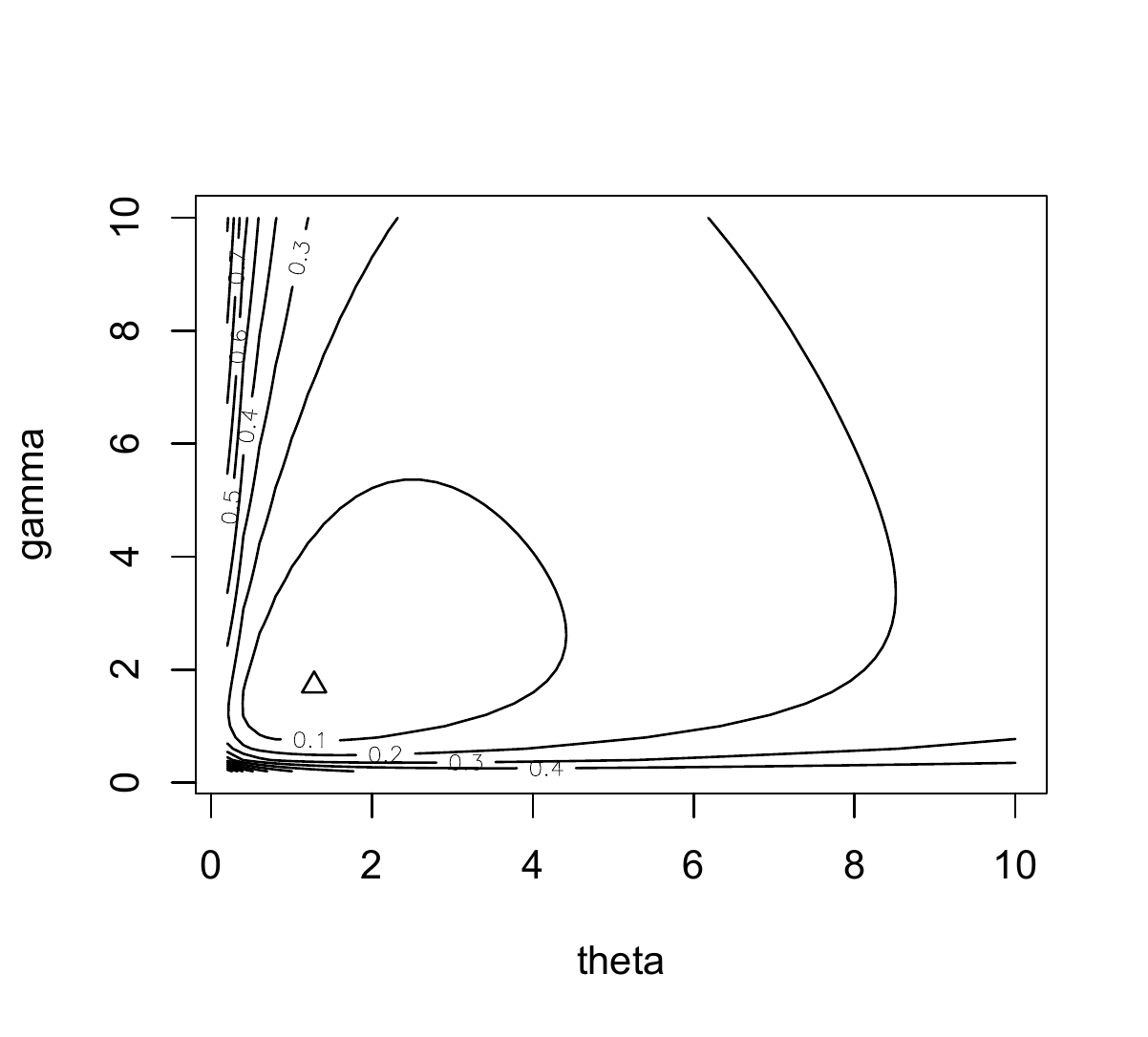}
\caption{\label{ex11}
Contour plot for $\rho_{\gamma,\theta}$ in Example~\ref{ex:lnorm-exp}. The triangle point indicates the minimum.}
\end{center}
\end{figure}

\begin{figure}[htb]
\begin{center}
\includegraphics[height=3in,width=3in]{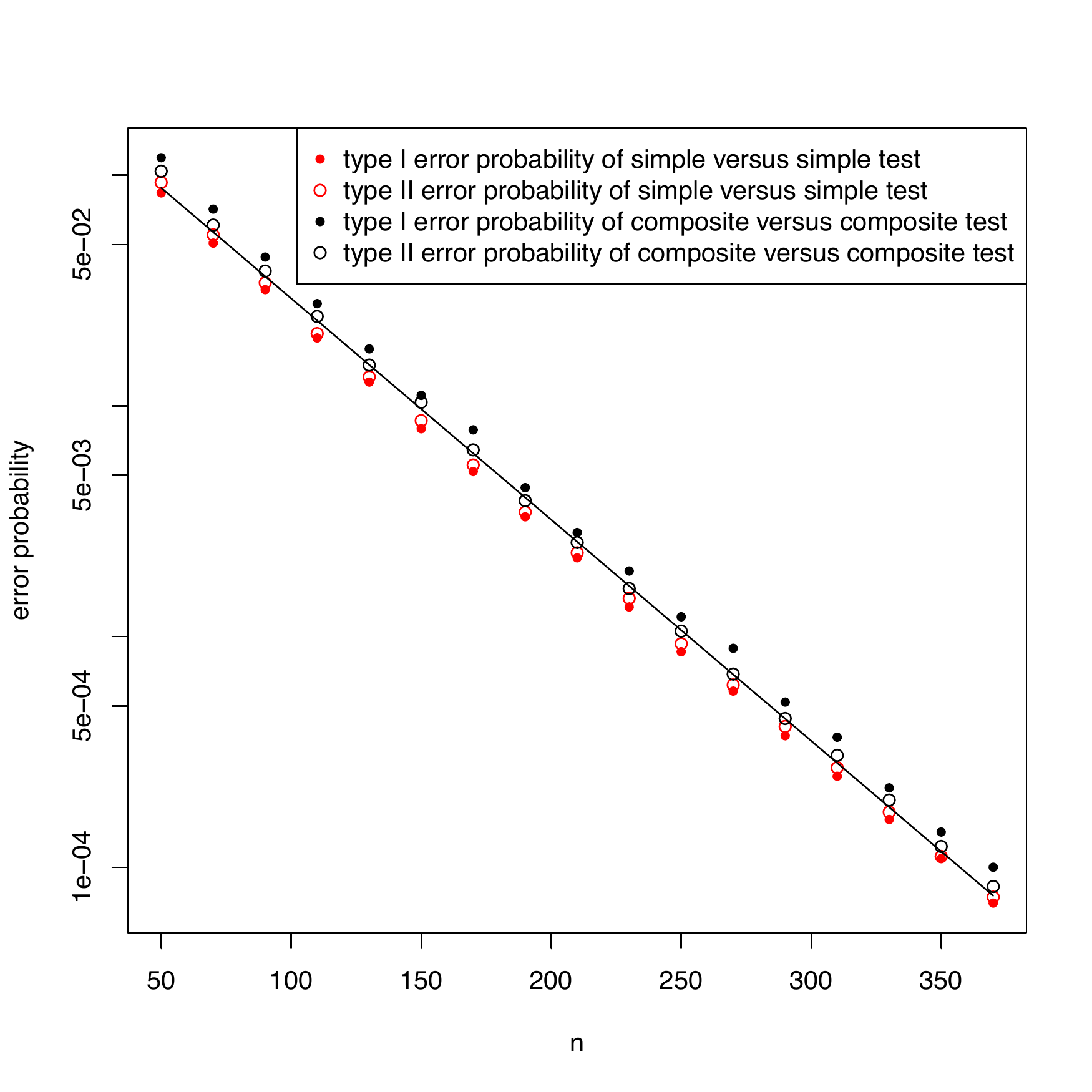}
\caption{
\label{ex12}
Decay rate of type I and type II error probabilities ($y$-coordinate) as a function of sample size ($x$-coordinate) in Example~\ref{ex:lnorm-exp}.}

\end{center}
\end{figure}

\begin{example}\label{ex:pois-geom}

We now proceed to the Poisson distribution versus the geometric distribution.
Let
\begin{eqnarray*}
g_\theta(x)=\frac{e^{-\theta}\theta^x}{x!} \quad \Theta= [1,+\infty), \qquad h_\gamma(x)=\frac{\gamma^x}{(1+\gamma)^{x+1}} \quad \Gamma=[0.5,+\infty),
\end{eqnarray*}
for $x\in \mathbb Z^+$. The parameter $\gamma$ is the failure to success odds. The minimum Chernoff index without constraint is attained at $\theta= \gamma=0$ and $\rho_{00}=0$. Thus we truncate the parameter spaces away from zero to separate the two families.

The Chernoff index $\rho_{\theta,\gamma}$ can be computed numerically and is minimized at $(\theta^*,\gamma^*)=(1,0.93)$, with $\rho_{\theta^*,\gamma^*}=0.023$.
Figure \ref{ex21} shows the contour plot of $\rho_{\theta,\gamma}$.
Same as in the previous example, we compute the maximal type I and type II error probabilities of the composite versus composite test and simple versus simple test. Figure \ref{ex22} shows the maximal  type I and type II error probabilities as a function of the sample size. The error probabilities appeared in Figure~\ref{ex22} range from $ 1.0\times 10^{-4}$ to $0.10$ with the sample sizes range from $40$ to $400$. 
We also fit a straight line to the logarithm of error probabilities against the sample sizes and the slope is $-0.025$. This numerical analysis  confirms our theorems.
\end{example}
\begin{example}\label{ex:linear}
We consider two regression models,
$$
H_0: Y=\beta_1 X_1 +\beta_2 X_2 +\varepsilon_1 \mbox{ against } H_1: Y= \beta_1 X_1+\zeta_1 Z_1+\varepsilon_2,
$$
where $(X_1,X_2,Z_1)$ jointly follows the multivariate Gaussian distribution with mean $(0,0,0)^T$ and the covariance matrix $\Sigma$. The random noises $\varepsilon_1$ and $\varepsilon_2$ follow the normal distributions $N(0,\sigma_1^2)$ and $N(0,\sigma_2^2)$ respectively and are independent of $(X_1,X_2,Z_1)$.
We assume the true model to be
$$
Y=\beta_1^0 X_1+\beta_2^0 X_2+\varepsilon,
$$
with the following parameters
$$
\beta_1^0= 1, \beta_2^0=2, \varepsilon\sim N(0,1), \mbox{ and }
\Sigma=
\begin{bmatrix}
1 & 0.1 & 0.1\\
0.1 & 1 & 0.1\\
0.1 & 0.1 &1
\end{bmatrix}.
$$
Let $(X_{i1},X_{i2},Z_{i1},Y_{i})^T$ be i.i.d.~copies of $(X_1,X_2,Z_1,Y)$ generated under the true model, for $i=1,...,n$.
Let $\theta=(\beta_1,\beta_2)$ and $\gamma=(\beta_1,\zeta_1)$ be the regression coefficients for the null and the alternative hypotheses respectively. The maximum likelihood estimators for $\theta$ and $\gamma$ are the least square estimators
$$
\hat\theta= (\tilde{X}^\top \tilde{X})^{-1}\tilde{X}^\top\tilde{Y} \mbox{ and } \hat\gamma= (\tilde{Z}^\top \tilde{Z})^{-1}\tilde{Z}^\top\tilde{Y},
$$
where 
$$
\tilde{X}=\begin{bmatrix}
X_{11} & X_{12}\\
X_{21}& X_{22}\\
\cdots & \cdots\\
X_{n1}& X_{n2}
\end{bmatrix},
\tilde{Z} =\begin{bmatrix}
X_{11} & Z_{11}\\
X_{21} & Z_{21}\\
\cdots &\cdots\\
X_{n1} & Z_{n1}
\end{bmatrix},
\mbox{ and } \tilde Y=\begin{bmatrix}
Y_{1}\\
Y_2\\
\cdots\\
Y_n
\end{bmatrix}
$$
are the design matrices for  linear models under $H_0$ and $H_1$.
We consider the error probability that the maximized log-likelihood of $H_0$ is smaller than that of $H_1$, equivalently, the residual sum of squares under $H_0$ is larger than that under $H_1$
$$
P_{\beta^0, \Sigma}\Big(\|\tilde Y - \tilde X \hat\theta\|^2> \|\tilde Y - \tilde Z \hat\gamma\|^2 \Big).
$$
From the theoretical analysis, the above probability decays at rate $e^{-n\rd}$ as $n\to\infty$, where the definition of $\rd$ is given in Theorem~\ref{thm:error-under-theta0}. 
 We solve the optimization problem \eqref{eq:D-opt} numerically and obtain $\rd=0.45$. Figure~\ref{fig:linear_error} and Figure~\ref{fig:linear_small} are  scatter plots of the error probability in the above display as a function of the sample size with different ranges for error probabilities. In Figure~\ref{fig:linear_error}, 
 the range of the error probability  is from $ 10^{-4}$ to $0.25$ and the range of sample size is from $3$ to $18$. In Figure~\ref{fig:linear_small}, the range of error probabilities is from $1.2\times 10^{-8}$ to $4.0\times 10^{-6}$ with the sample size from $24$ to $36$.
 We fit straight lines for  $\log P_{\beta^0, \Sigma}\Big(\|\tilde Y - \tilde X \hat\theta\|^2> \|\tilde Y - \tilde Z \hat\gamma\|^2 \Big)$ against $n$ using least square. The fitted slope in Figure~\ref{fig:linear_error} is $-0.52$ and the fitted slope in Figure~\ref{fig:linear_small} is $-0.47$. This confirms our theoretical results.
 
\end{example}

\begin{figure}[ht]
\begin{center}
\includegraphics[height=3in,width=3in]{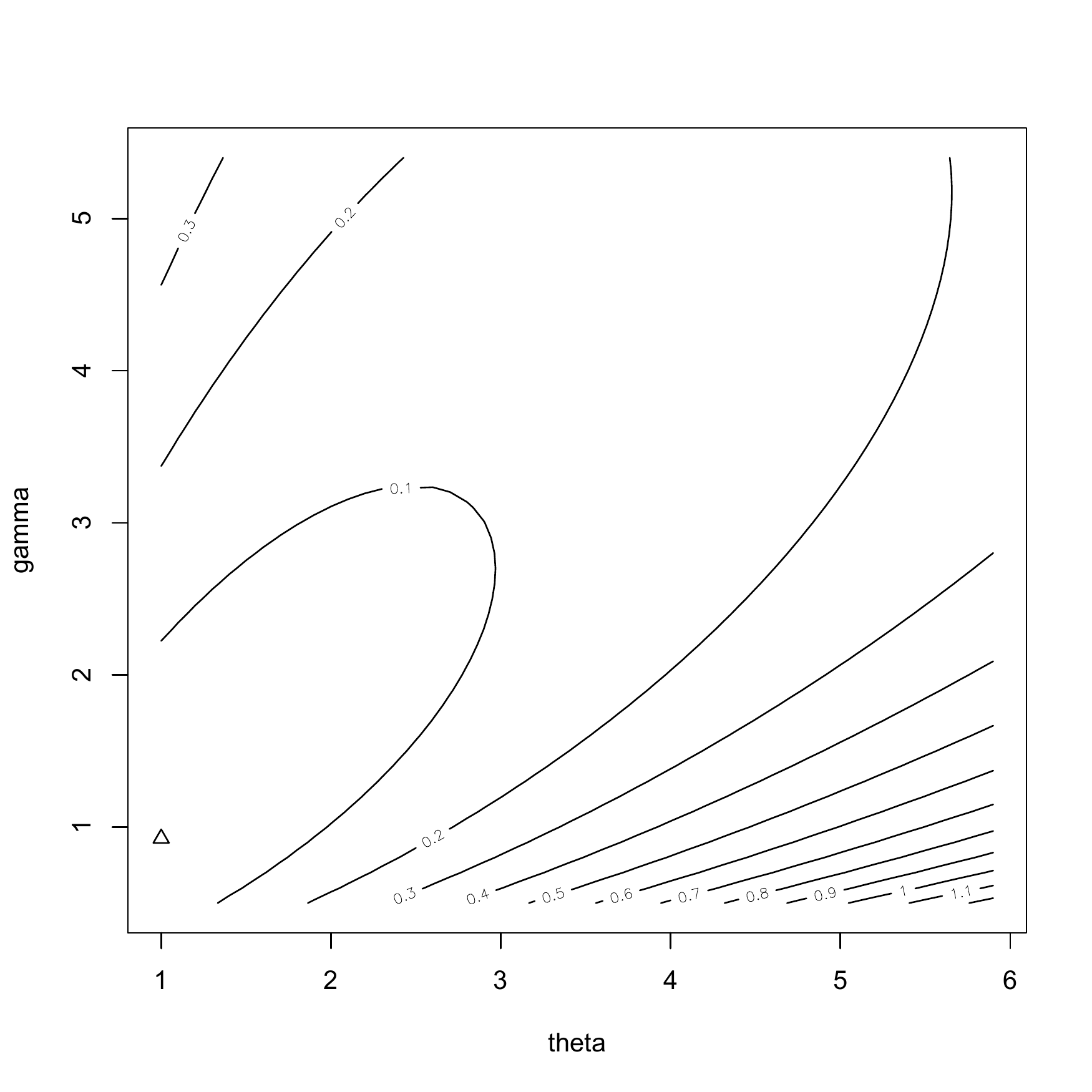}
\caption{\label{ex21}
Contour plot for $\rho_{\gamma,\theta}$ in Example~\ref{ex:pois-geom}. The triangle point indicates the minimum.}
\end{center}
\end{figure}

\begin{figure}[ht]\begin{center}
\includegraphics[height=3in,width=3in]{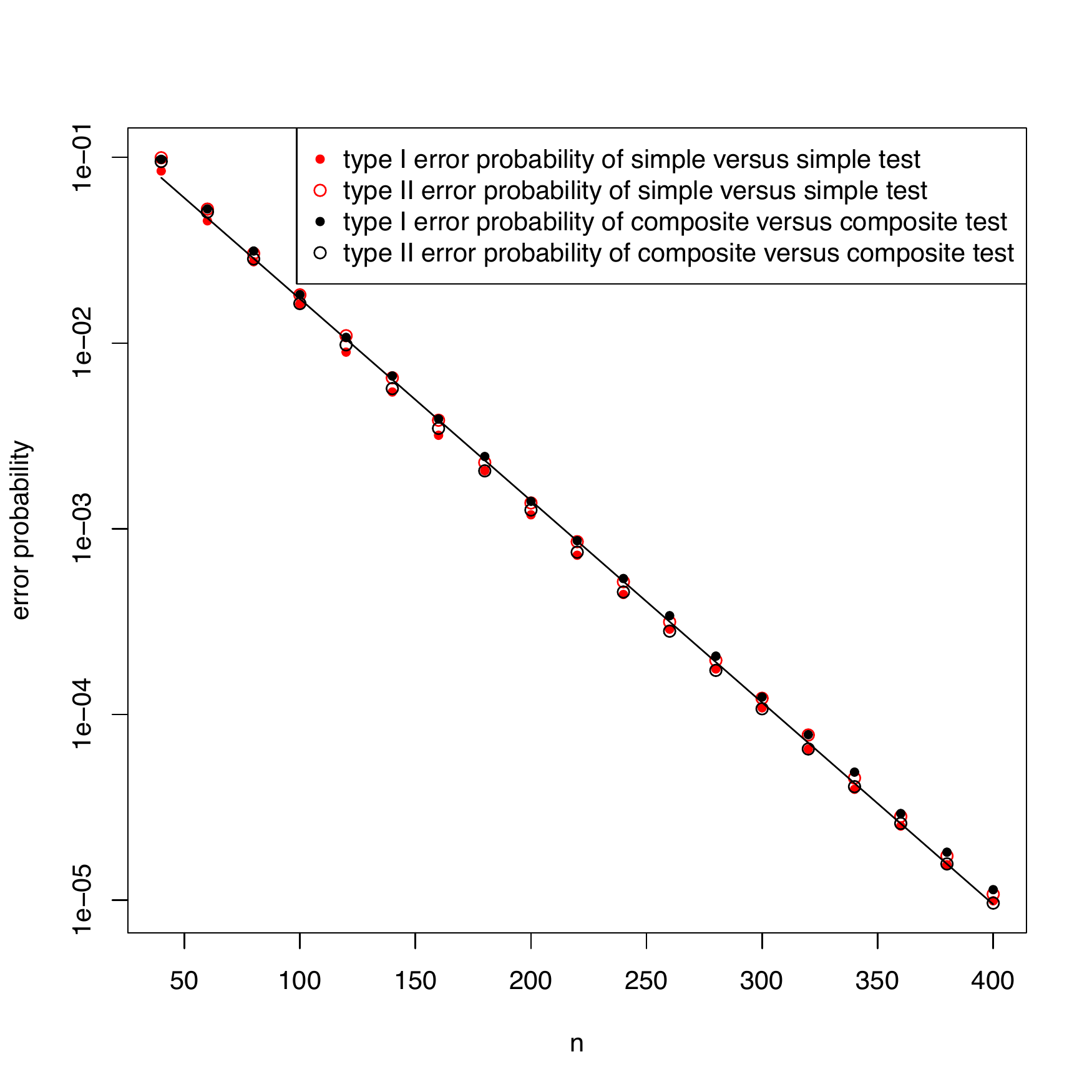}
\caption{\label{ex22}
Maximal type I and type II error probabilities ($y$-coordinate) as a function of sample size ($x$-coordinate) in Example~\ref{ex:pois-geom}.}
\end{center}
\end{figure}

%\begin{figure}[ht]
%\centering
%\includegraphics[width=3in, height=3in]{../../submit_coxaos/revision/linear_error}
%\caption{Error probability($y$-coordinate) in Example~\ref{ex:linear} as a function of sample size($x$-coordinate).}
%\label{fig:linear_error}
%\end{figure}

%\begin{figure}[ht]
%\centering
%\includegraphics[width=3in, height=3in]{../../submit_coxaos/revision/linear_small}
%\caption{Error probability($y$-coordinate) in Example~\ref{ex:linear} as a function of sample size($x$-coordinate).}
%\label{fig:linear_small}
%\end{figure}

%\begin{figure}
%\centering
%\begin{minipage}{.45\linewidth}
%  \includegraphics[width=\linewidth]{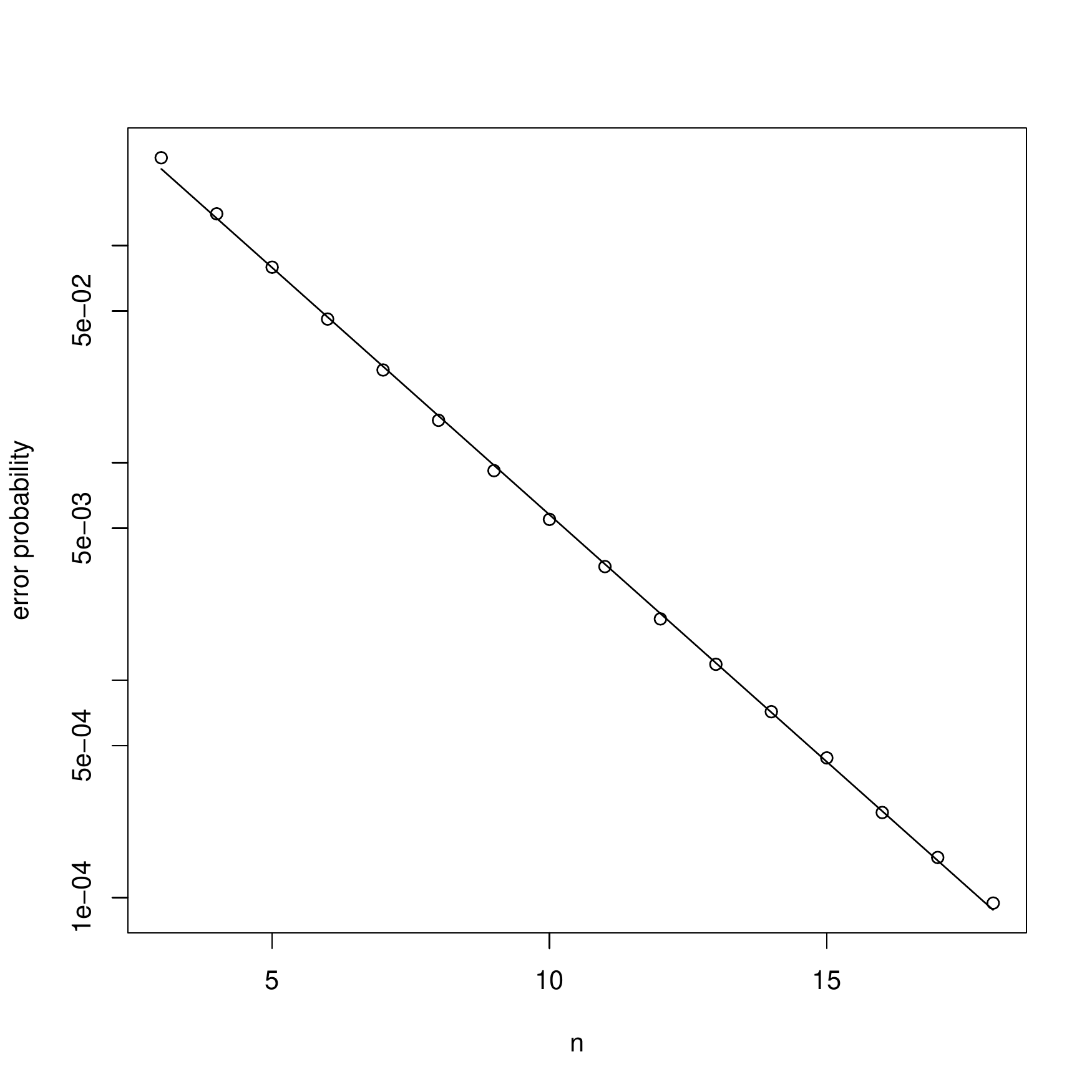}
%  \label{fig:linear_error}
%\end{minipage}
%\hspace{.05\linewidth}
%\begin{minipage}{.45\linewidth}
%  \includegraphics[width=\linewidth]{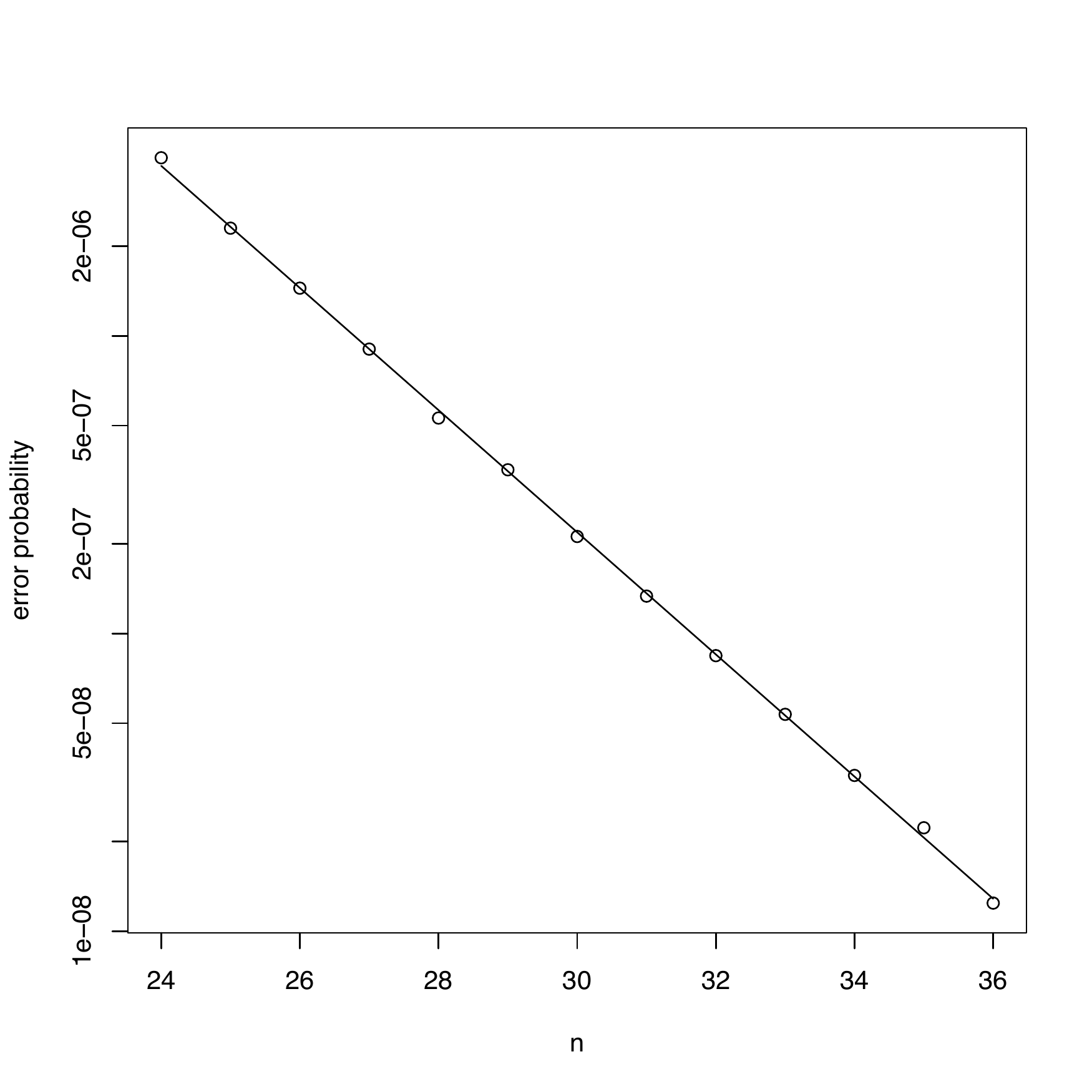}
%  \label{fig:linear_small}
%\end{minipage}
%\caption{Error probability($y$-coordinate) in Example~\ref{ex:linear} as a function of sample size($x$-coordinate).}
%\end{figure}
%

\begin{figure}


  \begin{subfigure}[b]{0.45\textwidth}
    \includegraphics[width=\textwidth]{linear_error}
    \caption{}
    \label{fig:linear_error}
  \end{subfigure}
  \begin{subfigure}[b]{0.45\textwidth}
    \includegraphics[width=\textwidth]{linear_small}
    \caption{}
    \label{fig:linear_small}
  \end{subfigure}
  \caption{Error probability($y$-coordinate) in Example~\ref{ex:linear} as a function of sample size($x$-coordinate).}
\end{figure}

\section{Concluding remarks}\label{SecConclude}

The generalized likelihood ratio test of separate parametric
families that was put forth by Cox in his two seminal papers has
received a great deal of attention in the statistics and
econometrics literature. The present investigation takes the
viewpoint of an early work by Chernoff (1952) where testing a
simple null versus a simple alternative is considered. By imposing
that the two types of error probabilities decay at the same rate, we
extend the Chernoff index to the case of the Cox test.

Our results are under the basic assumption that the data come from
one of the parametric families under consideration. It is often
the case that none is the true model. It would be of interest to
formulate error probabilities for this case and to see if
similar exponential decay results continue to hold.

An initial motivation that led to the Cox formulation of the
problem comes from the survival analysis where different models are
used to fit failure time data. The econometrics literature also
contains much subsequent development. Semiparametric models that
contain infinite dimensional nuisance parameters are widely used
in both econometrics and survival analysis. It would be of
interest to develop parallel results for testing separate
semiparametric models.

\appendix

\section{Proof of Lemma \ref{ThmSimpleComposite}}

Throughout the proof, we adopt the following notation $a_n \cong b_n$ if $\log a_n \sim \log b_n$.
We define the log-likelihood ratio as
\begin{equation*}
l_\gamma (x) = \log h_\gamma(x) - \log g(x).
\end{equation*}
The generalized log-likelihood ratio statistic is defined as
$$l=\sup_\gamma \sum_{i=1}^n l^i_\gamma$$
where $l^i_\gamma = l_\gamma(X_i)$. The generalized likelihood ratio test admits the rejection region
$$C_\lambda = \{e^l>\lambda\}.$$
We consider the case that $\lambda =1$ and show that for this particular choice of $\lambda$ the maximal type I and type II error probabilities decay exponentially fast with the same rate. We let $\gamma_*=\arg\inf\rho_\gamma$ and thus $\rho = \rho_{\gamma_*}$.

%\textbf{Calculations of the error rates.}
Based on Chernoff's calculation of large deviations for the log-likelihood ratio statistic, we proceed to the calculation of  the type I error probability
$$\Pr _g(l>0) = \Pr _g\Big(\sup_\gamma\sum_{i=1}^n l^i_\gamma>0\Big). %\cong \sup_\gamma P_g\Big( \sum_{i=1}^n l^i_\gamma>0\Big).
$$
We now provide an approximation of the right-hand side, which requires a lower bound and an upper bound.
We start with the lower bound by noticing that
\begin{equation}\label{lb1}
\Pr _g\Big(\sup_\gamma\sum_{i=1}^n l^i_\gamma>0\Big)\geq \sup_\gamma \Pr _g\Big( \sum_{i=1}^n l^i_\gamma>0\Big)
\end{equation}
that is a simple lower bound.
According to Proposition \ref{PropChernoff}, the right-hand side is bounded from below by $$\geq e^{- \{1+o(1)\}n \rho}$$
where $\rho=\min\rho_\gamma.$
For the upper bound and with some $\beta>0$, we split the probability
\begin{eqnarray}\label{split}
\Pr _g\Big(\sup_\gamma\sum_{i=1}^n l^i_\gamma>0\Big)&\leq & \Pr _g\Big(\sup_\gamma\sum_{i=1}^n l^i_\gamma>0,  \sup_\gamma \Big |\sum_{i=1}^n \nabla l^i_\gamma\Big| < e^{n^{1-\beta}} \Big) \notag\\
&&~~~+ \Pr _g\Big ( \sup_\gamma \Big |\sum_{i=1}^n \nabla l^i_\gamma\Big| \geq e^{n^{1-\beta}} \Big).
\end{eqnarray}
The first term on the right-hand side  is bounded by Lemma \ref{LemRF}.

\begin{lemma}\label{LemRF}
Consider a random function $\eta_n(\theta)$ living on a $d$-dimensional compact domain $\theta \in D$, where $n$ is an asymptotic parameter that will be send to infinity. Suppose that $\eta_n(\theta)$ is almost surely differentiable with respect to $\theta$ and for each $\theta$, there exists a rate $\rho(\theta)$ such that
$$\Pr \{\eta_n(\theta) > \zeta_n \} \cong e^{-n \rho(\theta)} \quad \mbox{for all $\zeta_n /n\to 0$ as $n\to \infty$}$$
where the above convergence is uniform in $\theta$.
%$\rho(\theta)$ is continuous in $\theta$.
Then, we have the following approximation
$$\liminf_{n \to \infty} -\frac 1 n \log \Pr \{\sup_{\theta \in D} \eta_n (\theta) >0, \sup_{\theta \in D}|\nabla \eta_n(\theta)| < e^{n^{1-\beta}}\}   \geq \min_\theta \rho(\theta)$$
for all $\beta >0$.
\end{lemma}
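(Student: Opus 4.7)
The plan is to exploit the gradient bound to reduce the supremum over the continuous parameter $\theta\in D$ to a maximum over a finite discretization, and then apply a union bound using the pointwise large deviation estimate. On the event $\sup_\theta|\nabla \eta_n(\theta)| < e^{n^{1-\beta}}$, the function $\eta_n$ is Lipschitz on $D$ with constant $L_n := e^{n^{1-\beta}}$, so its value at any point $\theta$ is controlled by its value at a nearby grid point.

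Concretely, I would pick a mesh $\delta_n$ and let $\{\theta_1,\ldots,\theta_{N_n}\}$ be a $\delta_n$-net of the compact set $D$, so that $N_n \leq C\,\delta_n^{-d}$ for a constant $C$ depending on $D$. Choose $\delta_n$ so that $\delta_n L_n = \zeta_n$ with $\zeta_n/n\to 0$; for instance, $\zeta_n = n^{1-\beta/2}$ and $\delta_n = n^{1-\beta/2}\, e^{-n^{1-\beta}}$. Then on the event $\{\sup_\theta|\nabla\eta_n|<L_n\}$, whenever $\sup_\theta \eta_n(\theta) > 0$, there must exist some grid point $\theta_k$ with $\eta_n(\theta_k) > -\zeta_n$. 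A union bound yields
\begin{equation*}
\Pr\Bigl\{\sup_{\theta}\eta_n(\theta)>0,\ \sup_\theta|\nabla\eta_n(\theta)|<e^{n^{1-\beta}}\Bigr\}
\leq \sum_{k=1}^{N_n} \Pr\{\eta_n(\theta_k) > -\zeta_n\}.
\end{equation*}
Since $-\zeta_n/n \to 0$, the hypothesis (applied with the sequence $\zeta_n' = -\zeta_n$, which still satisfies $\zeta_n'/n\to 0$) together with the stated uniformity in $\theta$ gives $\Pr\{\eta_n(\theta_k) > -\zeta_n\} \leq e^{-n\rho(\theta_k)(1+o(1))} \leq e^{-n\,\min_\theta\rho(\theta)\,(1+o(1))}$, uniformly over $k$.

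Combining these bounds,
\begin{equation*}
\Pr\Bigl\{\sup_{\theta}\eta_n>0,\ \sup_\theta|\nabla\eta_n|<L_n\Bigr\}
\leq C\delta_n^{-d}\,e^{-n\min_\theta\rho(\theta)(1+o(1))}
\leq e^{d\,n^{1-\beta} + O(\log n) - n\min_\theta\rho(\theta)(1+o(1))}.
\end{equation*}
Taking $-\frac{1}{n}\log$ and sending $n\to\infty$, the $d\,n^{1-\beta}/n = d n^{-\beta}$ and $O(\log n)/n$ contributions vanish, and one recovers $\liminf_{n\to\infty} -\frac1n \log \Pr\{\cdots\} \geq \min_\theta \rho(\theta)$, which is the desired conclusion.

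The only genuinely delicate point is the calibration of $\delta_n$: it must be small enough that the Lipschitz slack $\delta_n L_n$ is still $o(n)$ so the pointwise large-deviation estimate remains of order $e^{-n\rho(\theta_k)}$, yet not so small that the entropy cost $\log N_n \asymp d\,n^{1-\beta}$ overwhelms the rate $n\,\min_\theta\rho(\theta)$. The choice $\zeta_n = n^{1-\beta/2}$ above threads this needle because $\beta>0$ forces $n^{1-\beta} = o(n)$, so the number of grid points is sub-exponential at rate $\min_\theta\rho(\theta)$. The uniformity of the pointwise bound in $\theta$ (assumed in the hypothesis) is what makes the union bound cost-free, and compactness of $D$ is used only to ensure that $N_n \leq C\delta_n^{-d}$ with a fixed constant $C$.
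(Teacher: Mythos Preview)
Your argument is correct, but the paper establishes the bound by a different device. Rather than discretizing $D$ and applying a union bound over a $\delta_n$-net, the paper introduces a change of measure $Q_\zeta$ on path space with Radon--Nikodym derivative proportional to $\mathrm{mes}\{\theta:\eta_n(\theta)>\zeta\}$ (taking $\zeta=-1$), so that the probability of interest factors as $\int_D \Pr\{\eta_n(\theta)>\zeta\}\,d\theta$ times $E^{Q_\zeta}[1/\mathrm{mes}(A_\zeta);\,\cdots]$. The gradient bound then enters by guaranteeing that the excursion set $A_{-1}$ has Lebesgue measure at least of order $e^{-dn^{1-\beta}}$, since a ball of radius $e^{-n^{1-\beta}}$ about the maximizer stays above level $-1$. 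This volume lower bound plays exactly the role that your covering-number upper bound $N_n\le C\delta_n^{-d}\approx e^{dn^{1-\beta}}$ plays; the two arguments are dual to one another and produce the same subexponential prefactor $e^{dn^{1-\beta}}$.

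Your approach is the more elementary and portable one: it needs nothing beyond compactness, Lipschitz control, and the pointwise large-deviation rate, and it makes transparent where the $d n^{1-\beta}$ entropy cost originates. The paper's change-of-measure avoids the explicit net and yields the bound as an integral over $D$ rather than a finite sum, which connects more naturally to the importance-sampling viewpoint used elsewhere in the paper; but for the purpose of proving this lemma the two routes are equivalent in strength.
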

With the aid of Proposition~\ref{PropChernoff}, we have that the random function $\sum_{i=1}^n l_{\gamma}^i$ satisfies the assumption in Lemma~\ref{LemRF} with $\rho(\gamma)= \rho_{\theta\gamma}.$
Then the first term in \eqref{split} is bounded from the above by $e^{- \{1+o(1)\}n \rho}.$
For the second term in \eqref{split}, according to condition A3, we choose $\beta$ sufficiently small such that
$$\Pr _g\Big ( \sup_\gamma \Big |\sum_{i=1}^n \nabla l^i_\gamma\Big| \geq
e^{n^{1-\beta}} \Big)\leq n \times \Pr _g(\sup_\gamma |\nabla l^i_\gamma|> n^{-1}e^{n^{1-\beta}}) = o(e^{-n \rho}).$$
Thus, we obtain an upper bound
$$\Pr _g\Big(\sup_\gamma\sum_{i=1}^n l^i_\gamma>0\Big) \leq e^{-n \{\rho +o(1)\}}
%\cong \sup_\gamma P_g\Big( \sum_{i=1}^n l^i_\gamma>0\Big)
.
$$
Then, the type I error probability is approximated by
\begin{equation}\label{type1}
e^{-n \rho}\cong \sup_\gamma \Pr _g\Big( \sum_{i=1}^n l^i_\gamma>0\Big)\leq \Pr _g\Big(\sup_\gamma\sum_{i=1}^n l^i_\gamma>0\Big) \leq e^{-n \{\rho +o(1)\}}.
\end{equation}
%Thus, the type I error is
%$$ e^{-n \rho_{\gamma_*}}.$$
%Putting together the bounds in \eqref{lb1} and \eqref{type1}, we have that
%$$P_g\Big(\sup_\gamma\sum_{i=1}^n l^i_\gamma>0\Big)\cong e^{-n\rho}.$$

We now consider the type II error probability
$\alpha_2 = \sup_\gamma \Pr _{h_\gamma}( l < 0 ).$
For each $\gamma$, note that
$$\Pr _{h_\gamma}( l < 0 ) = \Pr _{h_\gamma}\Big(\sup_{\gamma_1} \sum_{i=1}^n l^i_{\gamma_1}<0\Big)\leq  \Pr _{h_\gamma}\Big(\sum_{i=1}^n l^i_{\gamma}<0\Big).$$
Note that the right-hand side is the type II error probability of
the likelihood ratio test. According to Chernoff's calculation, we
have that
$$\Pr _{h_\gamma}( l < 0 ) \leq \Pr _{h_\gamma}\Big(\sum_{i=1}^n l^i_{\gamma}<0\Big)\cong e^{-n\rho_\gamma}$$
for all $\gamma$. We take maximum with respect to $\gamma$ on both sides  and obtain that
\begin{equation}\label{ub}
%\Pr _{h_\gamma}\Big(\sum_{i=1}^n l^i_{\gamma}<0\Big) \cong \Pr _g\Big( \sum_{i=1}^n l^i_\gamma>0\Big)
\sup_\gamma\Pr _{h_\gamma}( l < 0 )\leq \sup _\gamma \Pr _g\Big( \sum_{i=1}^n l^i_{\gamma}>0\Big)\cong  e^{-n \min _\gamma \rho_{\gamma}}.
\end{equation}
Thus, the maximal type II error probability has an asymptotic upper bound that decays at the rate of the Chernoff index.

In what follows, we show that this asymptotic upper bound is asymptotically achieved.
We choose $\lambda_n$ possibly depending on $g$ such that %\xc{please pay attention here.}
$$\Pr _g\Big(\sup_\gamma \sum_{i=1}^n l^i_\gamma>0\Big) = \Pr _g\Big( \sum_{i=1}^n l^i_{\gamma_*}>n\lambda_n\Big).$$
Note that $g$ is fixed and the probabilities on both sides of the above identity decay at the rate $e^{-n\rho}$. Together with the continuity of the large deviations rate function, it must be true that $\lambda_n \to 0-$. We apply Neyman-Pearson lemma to the simple null $H_0: f=g$ versus simple alternative $H_1:f=h_{\gamma_*}$. Note that $\{\sum_{i=1}^n l^i_{\gamma_*}>n\lambda_n\}$ is a uniformly most powerful test and $\{\sup_\gamma \sum_{i=1}^n l^i_\gamma>0\}$ is a test with the same type I error probability. Then, we have that
\begin{equation}\label{lbb}
\Pr _{h_{\gamma_*}}\Big(\sup_\gamma \sum_{i=1}^n l^i_\gamma<0\Big) \geq \Pr _{h_{\gamma_*}}\Big( \sum_{i=1}^n l^i_{\gamma_*}<n\lambda_n\Big).
\end{equation}
That is, the type II error probability of the generalized likelihood ratio test must be greater than that of the likelihood ratio test under the simple alternative $h_{\gamma_*}$.
Note that $\lambda_n \to 0-$. Thanks to the  the continuity of the large deviations rate function, we have that
\begin{equation}\label{lb}
\Pr _{h_{\gamma_*}}\Big( \sum_{i=1}^n l^i_{\gamma_*}<n\lambda_n\Big)\cong \Pr _{h_{\gamma_*}}\Big( \sum_{i=1}^n l^i_{\gamma_*}<0\Big)\cong e^{-n \rho}.
\end{equation}
 Put together \eqref{ub}, \eqref{lbb}, and \eqref{lb}, we have that
$$\sup_\gamma\Pr _{h_\gamma}( l < 0 ) \cong e^{- n \rho}.$$
Thus, we conclude the proof.

%Now, $\alpha_1$ and $\alpha_2$ decays at the same exponential rate -- just like the simple alternative $f$ versus $g_{\beta_0}$.

\section{Proof of Theorem \ref{ThmCC}}

The one-to-one log-likelihood ratio is
$$l_{\theta \gamma} (x) = \log h_\gamma(x) - \log g_\theta(x).$$
The generalized log-likelihood ratio statistic is
$$l=\sup_\gamma \sum_{i=1}^n\log h_\gamma(X_i)- \sup_\theta \sum_{i=1}^n\log g_\theta(X_i)   = \inf_\theta \sup_\gamma \sum_{i=1}^n l^i_{\theta\gamma}$$
and the rejection region is
$$C_\lambda = \{e^l> \lambda\}.$$
%Similar to the previous cases, we consider the choice $\lambda =1$.
%For each $\theta$ and $\gamma$, define $\rho_{\theta\gamma}$ be the Chernoff index for
%$H_0: g_\theta(x)$, and $H_1: h_\gamma(x)$.
%Furthermore,
We define that $\gamma(\theta) = \arg\inf_{\gamma} \rho_{\theta\gamma}$, and $\theta(\gamma) = \arg\inf_{\theta} \rho_{\theta\gamma}$,
and $(\theta_*,\gamma_*) = \arg\inf_{\theta,\gamma} \rho_{\theta\gamma}.$
Note that the null and the alternative are now symmetric, thus we only need to consider one of the two types of error probabilities. We consider the type II error probability.
We now define
$$k_\theta =\sup_{\gamma} \sum_{i=1}^n l^i_{\theta\gamma}.$$
For each given $\theta$ and $\gamma$, we have a simple upper bound
\begin{equation}\label{eqst}
 \Pr _{h_\gamma}(k_\theta<0)\leq  \Pr _{h_\gamma}\Big(\sum_{i=1}^n l^i_{\theta\gamma}<0\Big)\cong  e^{-n \rho_{\theta\gamma}}.%\tag{*}
 \end{equation}
We now proceed to the type II error probability if $h_\gamma$ is the true distribution, that is
$$\Pr _{h_\gamma}(\inf_\theta k_\theta<0)\leq \Pr _{h_\gamma}(\inf_\theta k_\theta<0; \sup_\theta |\nabla k_\theta|< e^{n^{1-\beta}}) + \Pr _{h_\gamma}(\sup_\theta |\nabla k_\theta|\geq e^{n^{1-\beta}}).$$
The first term on the right-hand-side is bounded by Lemma \ref{LemRF} combined with \eqref{eqst}
$$\Pr _{h_\gamma}(\inf_\theta k_\theta<0; \sup_\theta |\nabla k_\theta|< e^{n^{1-\beta}})\leq e^{-n \{\inf_\theta \rho_{\theta\gamma}+o(1)\}}.$$
For the second term, we have that
\begin{eqnarray*}
\Pr _{h_\gamma}\{\sup_\theta |\nabla (\sup_{\gamma} \sum_{i=1}^n l^i_{\theta\gamma})|\geq e^{n^{1-\beta}}\}
&\leq& \Pr _{h_\gamma}(\sup_\theta \sup_{\gamma} \sum_{i=1}^n |\nabla l^i_{\theta\gamma}|\geq e^{n^{1-\beta}})\\
&\leq& n \Pr _{h_\gamma}(\sup_\theta \sup_{\gamma}  |\nabla l^i_{\theta\gamma}|\geq n^{-1}e^{n^{1-\beta}})= o(e^{-n \rho}).
\end{eqnarray*}
Thus,  we have that
$$\Pr _{h_\gamma}(\inf_\theta k_\theta<0) = \Pr _{h_\gamma} (l<0) \leq e^{-n \{\inf_\theta \rho_{\theta\gamma}+o(1)\}}, $$
which provides an upper bound for the type II error probability
$$\sup _\gamma \Pr _{h_\gamma}(l<0) \leq  e^{-n \{\inf_{\theta,\gamma} \rho_{\theta\gamma}+o(1)\}}.$$
We now provide a lower bound.
For a given $\theta$ and $\gamma(\theta)=\arg\inf_{\gamma}\rho_{\theta\gamma}$, applying proof of  Lemma \ref{ThmSimpleComposite} for the type II error probability by considering $H_0: f=g_\theta$ and $H_1: f\in \{h_\gamma: \gamma \in \Gamma\}$, we have that
%we define $\lambda_n\to 0-$ such that
%$$P_{g_\theta}(k_\theta>0) = P_{g_\theta}\Big( \sum_{i=1}^n l^i_{\theta \gamma(\theta)}>n\lambda_n\Big).$$
%Applying Neyman-Pearson lemma, we have that
%$$P_{h_{\gamma(\theta)}}(k_\theta<0)\geq P_{h_{\gamma(\theta)}}\Big( \sum_{i=1}^n l^i_{\theta \gamma(\theta)}<n\lambda_n\Big)\cong e^{-n \rho_{\theta \gamma(\theta)}},$$
%and therefore
$$
%\sup_\beta P_{g_{\beta}}(k_\alpha<0)\cong
\Pr _{h_{\gamma(\theta)}}(k_\theta<0) \cong e^{-n \rho_{\theta \gamma(\theta)}}.
$$
and thus
$$\Pr _{h_{\gamma(\theta)}}(\inf _\theta k_\theta<0)\geq \Pr _{h_{\gamma(\theta)}}(k_\theta<0)  \cong  e^{-n  \rho_{\theta \gamma(\theta)}}.$$
We set $\theta = \theta_*$ in the above asymptotic identity and  conclude the proof.

\section{Proof of Lemma \ref{LemRF}}

%\begin{proof}[Proof of Lemma A\ref{LemRF}]
We consider a change of measure on the continuous sample path space $Q_\zeta$ that admits the following Radon-Nikodym derivative
\begin{equation}\label{IS}
\frac{dQ_\zeta}{dP} = \frac{mes(A_\zeta)}{ \int_D
P\{\eta_n(\theta)>\zeta\}d\theta},
\end{equation}
where $A_\zeta = \{\theta\in D: \eta_n(\theta) >\zeta\}$ and $mes(\cdot)$ is the Lebesgue measure. Throughout the proof, we choose $\zeta = -1$.
To better understand the measure $Q_\zeta$, we provide another description of the sample path generation of $\eta_n$  from $Q_\zeta$, that requires the following three steps
\begin{itemize}
\item[1.] Sample a random index $\tau\in D$ following the density function
$$h(\tau) = \frac{\Pr \{\eta_n(\tau) > \zeta\}}{\int_D \Pr \{\eta_n(\theta) > \zeta\}d\theta};$$
\item[2.] Sample $\eta_n(\tau)$ given that $\eta_n(\tau)>\zeta$;
\item[3.] Sample $\{\eta_n(\theta): \theta \neq \tau\}$ from the original conditional distribution given the realized value of $\eta_n(\tau)$, that is, $\Pr \{\cdot |\eta_n(\tau)\}$.
\end{itemize}
To verify that the measure induced by the above sampling procedure is the same as that given by \eqref{IS}, see \cite{ABL09} that provides a discrete analogue of the above change of measure.

With these constructions, the interesting probability is given by
\begin{eqnarray*}
&&\Pr \{\sup_{\theta \in D} \eta_n (\theta) >0, \sup_{\theta \in D}|\nabla \eta_n(\theta)| <e^{n^{1-\beta}}\}\\
&=& E^{Q_\zeta}\Big\{\frac{dP}{dQ_\zeta}; \sup_{\theta \in D} \eta_n (\theta) >0, \sup_{\theta \in D}|\nabla \eta_n(\theta)| < e^{n^{1-\beta}}\Big\}\\
&=& E^{Q_\zeta}\Big\{\frac{1}{mes(A_\zeta)}; \sup_{\theta \in D} \eta_n (\theta) >0, \sup_{\theta \in D}|\nabla \eta_n(\theta)| < e^{n^{1-\beta}}\Big\}\\
&&~~~~~~\times \int_D \Pr (\eta_n(\theta) > \zeta)d\theta
\end{eqnarray*}
Via the condition of this lemma, we have that
$$\int_D \Pr (\eta_n(\theta) > \zeta)d\theta  \cong e^{-n \min _\theta \rho(\theta)}.$$
Thus, it is sufficient to show that $$E^{Q_\zeta}\Big\{\frac{1}{mes(A_\zeta)}; \sup_{\theta \in D} \eta_n (\theta) >0, \sup_{\theta \in D}|\nabla \eta_n(\theta)| < e^{n^{1-\beta}}\Big\}$$ cannot be too large.
On the set $\{\sup_{\theta \in D} \eta_n (\theta) >0, \sup_{\theta \in D}|\nabla \eta_n(\theta)| < n^{1-\beta}\}$, the volume $mes(A_\zeta)$ is in fact lower bounded. Let $\theta_*$ be the maximizer of $\eta_n(\theta)$ and thus $\eta_n(\theta_*) > 0$. On the other hand, the gradient of $\eta_n$ is upper bounded by $e^{n^{1-\beta}}$. Therefore, there exists  a small region of radius $e^{-n^{1-\beta}}$ in which $\eta_n$ will be above $\zeta=-1$. Thus, $mes(A_\zeta)$ is lower bounded by $\varepsilon_0 e^{-dn^{1-\beta}}.$
Thus, the bound
$$\Pr (\sup_{\theta \in D} \eta_n (\theta) >0, \sup_{\theta \in D}|\nabla \eta_n(\theta)| < n^\beta)\leq \frac{ e^{dn^{1-\beta}}}{\varepsilon_0} \int_D \Pr (\eta_n(\theta) > \zeta)d\theta \cong e^{-n \min _\theta \rho(\theta)}$$
concludes the proof.
%\end{proof}

\section{Proof of Corollary~\ref{cor:non-compact}}

The proof is very similar to that of Theorem \ref{ThmCC} and therefore we omit some repetitive steps.
We first consider the type I error probability,
$$
\sup_{\theta\in\Theta}P_{g_{\theta}}(LR_n>1).
$$
For each $\theta\in\Theta$, we establish an upper bound for 
\begin{equation}\label{eq:type-I-prob}
P_{g_{\theta}}(LR_n>1)= P_{g_{\theta}}\Big(\sup_{\gamma\in\Gamma}\log h_{\gamma}(X_i)-\sup_{\theta\in \Theta}\log g_{\theta}(X_i)>0\Big).
\end{equation}
The event $$\{\sup_{\gamma\in\Gamma}\log h_{\gamma}(X_i)-\sup_{\theta\in \Theta}\log g_{\theta}(X_i)>0\}$$ implies
$$
\{\sup_{\gamma\in\Gamma}\log h_{\gamma}(X_i)-\log g_{\theta}(X_i)>0\}.
$$
Thus, we have 
$$
P_{g_{\theta}}(LR_n>1)\leq P_{g_{\theta}}\Big(\sup_{\gamma\in\Gamma}\log h_{\gamma}(X_i)-\log g_{\theta}(X_i)>0\Big).
$$
We split the probability
\begin{eqnarray}
&& P_{g_{\theta}}\Big(\sup_{\gamma\in \Gamma}\log h_{\gamma}(X_i)-\log g_{\theta}(X_i)>0\Big)\notag\\
&\leq& P_{g_{\theta}}\Big(\sup_{\gamma\in \Gamma}\log h_{\gamma}(X_i)-\log g_{\theta}(X_i)>0,~\hat \gamma\in A_{\theta}\Big)+ P_{g_{\theta}}\Big(\hat \gamma\in A_{\theta}^c\Big)\notag\\
&\leq & P_{g_{\theta}}\Big(\sup_{\gamma\in A_{\theta}}\log h_{\gamma}(X_i)-\log g_{\theta}(X_i)>0\Big)+ P_{g_{\theta}}\Big(\hat \gamma\in A_{\theta}^c\Big).\label{eq:upper-bound-inequality}
\end{eqnarray}
According to Assumption A4, the second term is $o(e^{-n\rho})$.
For the first term, notice that $A_{\theta}$ is a compact subset of $R^{d_g}$. The conditions for Lemma~\ref{ThmSimpleComposite} are satisfied. According to Lemma~\ref{ThmSimpleComposite}, the first term in \eqref{eq:upper-bound-inequality} is bounded above by
$$
e^{-(1+o(1))n\times \min_{\gamma\in A_{\theta}}\rho_{\theta\gamma}}\leq e^{-(1+o(1))n\times \min_{\gamma\in \Gamma}\rho_{\theta\gamma}}\leq e^{-(1+o(1))n\times \min_{\theta,\gamma}\rho_{\theta\gamma}}.
$$
Combining the upper bounds for the first and second terms in \eqref{eq:upper-bound-inequality}, we have
$$
P_{g_{\theta}}(LR_n>1)\leq e^{-(1+o(1))n\times \min_{\theta,\gamma}\rho_{\theta\gamma}}.
$$
The above derivation is uniform in $\theta$. We obtain an upper bound for the type I error
$$
\sup_{\theta}P_{g_{\theta}}(LR_n>1)\leq e^{-(1+o(1))n\times \min_{\theta,\gamma}\rho_{\theta\gamma}}.
$$
Similarly, we obtain an upper bound for the type II error probability
$$
\sup_{\gamma}P_{h_{\gamma}}(LR_n\leq1)\leq e^{-(1+o(1))n\times \min_{\theta,\gamma}\rho_{\theta\gamma}}.
$$
Now we proceed to a lower bound for the type I error probability. Upon having the upper bounds for both type I and type II error probabilities, the lower bounds for type I and type II error probabilities can be derived using the same argument as that in the proof of Theorem~\ref{ThmCC}. We omit the details.

\bigskip

\section{Proof of Theorem \ref{thm:error-under-theta0}}

The proof of the theorem consists of establishing upper and lower bounds for the probability
$$
P_{g_{\theta_0}}(LR_n>e^{nb})=P_{g_{\theta_0}}\Big(\sup_{\gamma\in\Gamma}\inf_{\theta\in\Theta}\sum_{i=1}^n [\log h_{\gamma}(X_i)-\log g_{\theta}(X_i)]> nb\Big).
$$
\paragraph{Upper bound}
The event 
$$\Big\{\sup_{\gamma\in\Gamma}\inf_{\theta\in\Theta}\sum_{i=1}^n \log h_{\gamma}(X_i)-\log g_{\theta}(X_i)> nb\Big\}$$ 
implies $$\Big\{\sup_{\gamma\in\Gamma}\sum_{i=1}^n \log h_{\gamma}(X_i)-\log g_{\theta^{\dagger}}(X_i) >nb\Big\}.$$
Therefore, we have an upper bound
\begin{equation}\label{eq:upper-bound-first}
P_{g_{\theta_0}}(LR_n>1)\leq P_{g_{\theta_0}}\Big(
\sup_{\gamma} \sum_{i=1}^n \log h_{\gamma}(X_i)-\log g_{\theta^\dagger}(X_i)
> nb\Big).
\end{equation}
We split the probability
\begin{eqnarray}\label{eq:splitprob}
& & P_{g_{\theta_0}}\Big(
\sup_{\gamma} \sum_{i=1}^n [\log h_{\gamma}(X_i)-\log g_{\theta^\dagger}(X_i)]
>nb\Big)\\
&\leq& P_{g_{\theta_0}}\Big(\sup_{\gamma} \sum_{i=1}^n [\log h_{\gamma}(X_i)-\log g_{\theta^\dagger}(X_i)]
>nb, \sup_{\gamma}\Big|\sum_{i=1}^n \nabla_{\gamma} \log h_{\gamma}(X_i)\Big|<e^{n^{1-\beta}}
\Big)\notag\\
&&+ P_{g_{\theta_0}}\Big(\sup_{\gamma}\sum_{i=1}^n\Big|\nabla_{\gamma} \log h_{\gamma}(X_i)\Big|\geq e^{n^{1-\beta}}
\Big).\notag
\end{eqnarray}
We establish upper bounds of the first and second terms in \eqref{eq:splitprob} separately. 
For the first term, let $\eta_n(\gamma)= \sum_{i=1}^n [\log h_{\gamma}(X_i)-\log g_{\theta^\dagger}(X_i)]-nb$. For each $\gamma$, the exponential decay rate of the probability
\begin{equation}\label{eq: eta gamma decay rate}
\log P_{g_{\theta_0}}(\eta_n(\gamma)\geq 0)\leq n\log \inf_{\lambda} M_{g_{\theta_0}}(\lambda,\gamma,\td).
\end{equation}
is established through standard large deviation calculation.
Thanks to Lemma~\ref{LemRF}
and \eqref{eq: eta gamma decay rate},  the first term in \eqref{eq:splitprob} is bounded above by
$$
 \sup_{\gamma}\inf_{\lambda} \{M_{g_{\theta_0}}(\td,\lambda,\gamma)\}^{(1+o(1))n}= e^{-(1+o(1))n \rd}.
$$
For the second term, according to the Assumption A3,
$$
P_{g_{\theta_0}}\Big(\sup_{\gamma}\sum_{i=1}^n\Big|\nabla_{\gamma} \log h_{\gamma}(X_i)\Big|\geq e^{n^{1-\beta}}
\Big)\leq n\times P_{g_{\theta_0}}(\sup_{\gamma}|\nabla_{\gamma}\log h_{\gamma}(X_i)|>n^{-1}e^{n^{1-\beta}})=o(e^{-n\rd}).
$$
Combining the analyses for both the first and the second term, we arrive at an upper bound
$$
P_{g_{\theta_0}}(LR_n>e^{nb})\leq e^{-(1+o(1))n\rd}.
$$
\paragraph{Lower bound} 
Recall that
$$
\frac{dQ^{\dagger}}{dP_{g_{\theta_0}}}=\exp\Big\{\lambda^{\dagger}(\log h_{\gamma^{\dagger}}(X)-\log g_{\theta^{\dagger}}(X))-nb\Big\}/M^{\dagger}_{g_{\theta_0}}.
$$
Then, the probability can be written as
$$
P_{g_{\theta_0}}(LR_n>e^{nb})=E^{\Qd}\Big\{
\frac{dP_{g_{\theta_0}}}
{d\Qd}; \sum_{i=1}^n [\log h_{\hat\gamma}(X_i)-\log g_{\hat\theta}(X_i)]> nb
\Big\},
$$
where $\hat\gamma$ and $\hat\theta$ are the maximum likelihood estimators for the $h$-family and the $g$-family respectively.
According to the definition of $\Qd$, the above display is equal to 
\begin{equation}\label{eq: change of measure for lower bound}
e^{-n\rd}E^{\Qd}\Big\{e^{-\ld [\sum_{i=1}^n \log h_{\gd}(X_i)-\log g_{\td}(X_i)-nb]};  \sum_{i=1}^n \log h_{\hat\gamma}(X_i)-\log g_{\hat\theta}(X_i)>nb\Big\},
\end{equation}
where $\rd=-\log M^{\dagger}_{g_{\theta_0}}$.
We now establish a lower bound for 
$$
I\triangleq E^{\Qd}\Big\{e^{-\ld [\sum_{i=1}^n \log h_{\gd}(X_i)-\log g_{\td}(X_i)-nb]};  \sum_{i=1}^n \log h_{\hat\gamma}(X_i)-\log g_{\hat\theta}(X_i)>nb\Big\}.
$$
Because $e^{-\ld [\sum_{i=1}^n \log h_{\gd}(X_i)-\log g_{\td}(X_i)-nb]}$ is  positive, we have
\begin{equation}\label{eq:D-I}
 I 
\geq  E^{\Qd}\Big\{e^{-\ld [\sum_{i=1}^n \log h_{\gd}(X_i)-\log g_{\td}(X_i)-nb]};  \sum_{i=1}^n \log h_{\hat\gamma}(X_i)-\log g_{\hat\theta}(X_i)>nb, E_1\Big\},
\end{equation}
where
$$E_1=\Big\{\Big|\sum_{i=1}^n \log h_{\gd}(X_i)-\log g_{\td}(X_i)-nb\Big|\leq \sqrt{n}\Big|\Big\}.$$
On the set $E_1$, we have the following inequality of the integrand 
$$e^{-\ld [\sum_{i=1}^n \log h_{\gd}(X_i)-\log g_{\td}(X_i)-nb]}\geq e^{-|\ld|\sqrt{n}}. $$ 
We plug the above inequality back to \eqref{eq:D-I} and obtain a lower bound for
\begin{equation}\label{eq: I lower}
I\geq e^{-|\ld|\sqrt{n}} \Qd\Big( \{\sum_{i=1}^n \log h_{\hat\gamma}(X_i)-\log g_{\hat\theta}(X_i)>nb\}\cap E_1\Big).
\end{equation}
For the rest of the proof, we develop a lower bound for the probability $$\Qd\Big( \{\sum_{i=1}^n \log h_{\hat\gamma}(X_i)-\log g_{\hat\theta}(X_i)>nb\}\cap E_1\Big).$$
The maximum likelihood estimator $\hat{\gamma}$ satisfies the inequality
\begin{equation}\label{eq:mle-inequality}
\sum_{i=1}^n \{\log h_{\hat{\gamma}}(X_i)-\log h_{\gd}(X_i)\}\geq 0.
\end{equation}
Furthermore, with the aid of Rolle's Theorem, there exists $\tilde{\theta}$  such that
\begin{eqnarray}
&&\sum_{i=1}^n \{\log g_{\hat\theta}(X_i)-\log g_{\td}(X_i)\}\notag\\
&=& (\hat{\theta}-\td)\cdot\sum_{i=1}^n\nabla_{\theta} \log g_{\td}(X_i)
 + \frac{1}{2}(\hat{\theta}-\td)^\top\sum_{i=1}^n\nabla^2_{\theta}  g_{\tilde{\theta}}(X_i)(\hat{\theta}-\td)
,\label{eq:rolle}
\end{eqnarray}
where ``$\nabla^2_{\theta}$''  denotes the Hessian matrices with respect to $\theta$  and ``$\cdot$'' denotes the inner product between vectors.
\eqref{eq:mle-inequality} and \eqref{eq:rolle} together give
\begin{eqnarray}
&&\sum_{i=1}^n \{\log h_{\hat{\gamma}}(X_i)-\log g_{\hat{\theta}}(X_i)\}-\sum_{i=1}^n \{\log h_{\gd}(X_i)-g_{\td}(X_i)\} \notag\\
&\geq& -(\hat{\theta}-\td)\cdot\sum_{i=1}^n\nabla_{\theta} \log g_{\td}(X_i)
 - \frac{1}{2}(\hat{\theta}-\td)^\top\sum_{i=1}^n\nabla^2_{\theta}  g_{\tilde{\theta}}(X_i)(\hat{\theta}-\td).\label{eq:difference}
 \end{eqnarray}

We define
$$
E_2=\Big\{(\hat{\theta}-\td)^\top\sum_{i=1}^n\nabla_{\theta} \log g_{\td}(X_i)\leq\frac{\sqrt{n}}{4}\Big \},
$$
$$
E_3=\Big\{\frac{1}{2}|\hat{\theta}-\td|^2 \sup_{\theta}\sum_{i=1}^n |\nabla^2_{\theta} \log g_{\theta}(X_i)|\leq \frac{\sqrt{n}}{4}\Big\},
$$
$$
E_4= \Big\{\frac{\sqrt{n}}{2}<\sum_{i=1}^n [\log h_{\gd}(X_i)-\log g_{\td}(X_i)]-nb\leq \sqrt{n}\Big\}.
$$
Based on \eqref{eq:difference}, we have that
$$
(E_2\cap E_3 \cap E_4) \subset \{\sum_{i=1}^n \log h_{\hat\gamma}(X_i)-\log g_{\hat\theta}(X_i)>nb\}\cap E_1.
$$
We insert this to \eqref{eq:D-I}, and obtain that
\begin{equation}\label{eq:I-lower2}
I\geq e^{-|\ld|\sqrt{n}}\Qd(E_2\cap E_3\cap E_4)\geq e^{-|\ld|\sqrt{n}}\Big\{\Qd(E_4)- \Qd(E_2^c) - \Qd(E_3^c)\Big\}.
\end{equation}
For the rest of the proof, we develop  upper bounds for $\Qd(E_2^c)$ and $\Qd(E_3^c)$ and a lower bound for $\Qd(E_4)$. 
For $\Qd(E_4)$, because $\ld=\arg\inf_{\lambda}M_{g_{\theta_0}}(\td,\gd,\lambda)$, we have 
$$
\frac{\partial }{\partial \lambda}M_{g_{\theta_0}}(\td,\gd,\ld)=0.
$$
Consequently,
$$E^{\Qd}(\log h_{\gd}(X)-\log g_{\td}(X)-b)
= (M^{\dagger}_{g_{\theta_0}})^{-1}\frac{\partial }{\partial \lambda}M_{g_{\theta_0}}(\td,\gd,\ld)=0. 
$$ 
According to the central limit theorem, there exists $\varepsilon_0>0$ such that
$$\lim\inf_{n\to\infty} \Qd(E_4)>\varepsilon_0.$$
Thus a lower bound for $\Qd(E_4)$ has been derived.
Before we proceed to upper bounds for $\Qd(E_2^c)$ and $\Qd(E_3^c)$, we establish the following lemma, whose proof is provided in Appendix~\ref{sec:proof-for-lemma}.
\begin{lemma}\label{lemma:consistent}
Under the settings of Theorem~\ref{thm:error-under-theta0}, we have
$$
\gd=\bar{\gamma}\mbox{ and } \td=\bar \theta.
$$
\end{lemma}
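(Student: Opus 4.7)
The plan is to show that $(\td, \gd)$ satisfies the Euler conditions appearing in Assumption~A6, and then to invoke uniqueness of the solution to those conditions to conclude $\td = \bar\theta$ and $\gd = \bar\gamma$.

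First I would record the first-order optimality conditions satisfied by the minimax point $(\td, \gd, \ld)$. Under Assumption A5 the function $M_{g_{\theta_0}}(\theta, \gamma, \lambda)$ is twice differentiable at $(\td, \gd, \ld)$, so differentiation under the expectation yields
\begin{align*}
\nabla_\theta M_{g_{\theta_0}}(\td, \gd, \ld) &= -\ld \cdot M^{\dagger}_{g_{\theta_0}} \cdot E^{\Qd}\{\nabla_\theta \log g_{\td}(X)\}, \\
\nabla_\gamma M_{g_{\theta_0}}(\td, \gd, \ld) &= \ld \cdot M^{\dagger}_{g_{\theta_0}} \cdot E^{\Qd}\{\nabla_\gamma \log h_{\gd}(X)\},
\end{align*}
where the change of measure collapses the exponential weight into integration against $\Qd$. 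One needs to verify the sign $\ld > 0$, which follows from the hypothesis \eqref{b}: since $b$ lies strictly above $\sup_\gamma E_{g_{\theta_0}}\{\log h_\gamma(X) - \log g_{\theta_0}(X)\}$, the unconstrained minimization $\inf_\lambda M_{g_{\theta_0}}(\td, \gd, \lambda)$ is a standard Cram\'er/Chernoff rate computation for which the optimizer is strictly positive.

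Next I would translate the saddle-point property of $(\td, \gd, \ld)$ into directional derivative inequalities. Since $\td$ minimizes $\sup_\gamma \inf_\lambda M_{g_{\theta_0}}(\cdot, \gamma, \lambda)$ over $\Theta$, for any tangent direction $y \in T_{\td}\Theta$ the directional derivative of $M_{g_{\theta_0}}(\cdot, \gd, \ld)$ at $\td$ must be non-negative, i.e.
$$
y^\top \nabla_\theta M_{g_{\theta_0}}(\td, \gd, \ld) \geq 0.
$$
Combined with the expression above and the positivity $\ld \cdot M^{\dagger}_{g_{\theta_0}} > 0$, this is exactly
$$
E^{\Qd}\{y^\top \nabla_\theta \log g_{\td}(X)\} \leq 0 \quad \text{for all } y \in T_{\td}\Theta.
$$
A symmetric argument at $\gd$, which realizes the inner supremum, gives
$$
E^{\Qd}\{y^\top \nabla_\gamma \log h_{\gd}(X)\} \leq 0 \quad \text{for all } y \in T_{\gd}\Gamma.
$$
Thus $(\td, \gd)$ solves the Euler system \eqref{eq:expected-score equation-under-Qd}.

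Finally, Assumption A6 states that this Euler system has the unique solution $(\bar\theta, \bar\gamma)$ under $\Qd$. Identifying the two solutions yields $\td = \bar\theta$ and $\gd = \bar\gamma$, which is the claim. The main technical subtlety is the boundary case where $\td \in \partial\Theta$ or $\gd \in \partial\Gamma$: here the tangent cone $T_{\td}\Theta$ is a proper half-space rather than the full $\mathbb{R}^{d_g}$, and the minimax first-order conditions must be written as one-sided directional inequalities rather than equalities; Assumption A2 (continuously differentiable boundary) ensures $T_{\td}\Theta$ has precisely the description used in the statement of A6, so the matching of inequalities goes through without modification.
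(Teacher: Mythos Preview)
Your overall strategy matches the paper's: show that $(\td,\gd)$ satisfies the Euler conditions \eqref{eq:expected-score equation-under-Qd} and then invoke the uniqueness clause of Assumption~A6. Where you differ is in how you extract those first-order conditions. You argue directly from the saddle structure of $\inf_\theta\sup_\gamma\inf_\lambda M_{g_{\theta_0}}$, asserting that the directional derivative of $M_{g_{\theta_0}}(\cdot,\gd,\ld)$ at $\td$ is non-negative along every $y\in T_{\td}\Theta$ because $\td$ minimizes the value function $\theta\mapsto\sup_\gamma\inf_\lambda M_{g_{\theta_0}}(\theta,\gamma,\lambda)$. That identification of the derivative of the value function with the partial derivative of $M$ at the chosen saddle is an envelope/Danskin step which you invoke implicitly; it requires the inner maximizer and minimizer to be suitably unique (or the directional form of Danskin) and deserves a sentence of justification.

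The paper takes a more explicit route: it recasts $\sup_\gamma\inf_\lambda M$ as the constrained problem $\max_{\gamma,\lambda} M$ subject to $\partial_\lambda M=0$, applies KKT, and shows the multiplier on the constraint vanishes, yielding $\nabla_\gamma M(\td,\gd,\ld)=0$ in the interior case and the one-sided inequality in the boundary case (handled by locally writing $\partial\Gamma$ as a graph). For $\theta$ it then adds $\nabla_\gamma M=0$ as a second equality constraint and repeats the KKT argument. This is longer but sidesteps the envelope theorem entirely and makes the interior/boundary dichotomy fully explicit. Your treatment of the boundary case via tangent cones is morally the same conclusion the paper reaches after its reparametrization, just stated more abstractly. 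Both approaches verify $\ld>0$ in the same way, from convexity of $M$ in $\lambda$ and the sign of $\partial_\lambda M$ at $\lambda=0$.
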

We now proceed to an upper bound of $\Qd(E_2^c)$.
We split the sum
\begin{eqnarray}\label{eq:minus-mean}
&&(\hat{\theta}-\td)^\top\sum_{i=1}^n\nabla_{\theta} \log g_{\td}(X_i)\\
&&~~~~=  (\hat{\theta}-\td)^\top \sum_{i=1}^n[\nabla_{\theta} \log g_{\td}(X_i) - E^{\Qd}\nabla_{\theta}g_{\td}(X_i)] +n(\hat{\theta}-\td)^\top E^{\Qd}\nabla_{\theta}g_{\td}(X)\notag
\end{eqnarray}
Note that $\hat{\theta}\in T_{\td}\Theta$, according to Assumption A6 and Lemma~\ref{lemma:consistent}, we have that $(\hat{\theta}-\td)^\top E^{\Qd}\nabla_{\theta}g_{\td}(X) \leq 0$.
Therefore, \eqref{eq:minus-mean} implies
\begin{eqnarray}
(\hat{\theta}-\td)^\top\sum_{i=1}^n\nabla_{\theta} \log g_{\td}(X_i)
\leq 
  (\hat{\theta}-\td)^\top \sum_{i=1}^n[\nabla_{\theta} \log g_{\td}(X_i) - E^{\Qd}\nabla_{\theta}g_{\td}(X_i)].\label{eq:first-order-term}
\end{eqnarray}
Using Chebyshev's inequality and the fact $E(|\nabla_{\theta}\log g_{\td}(X)|^2 )<\infty$, we have
$$
n^{-\frac{3}{4}}\sum_{i=1}^n[\nabla_{\theta} \log g_{\td}(X_i) - E^{\Qd}\nabla_{\theta}g_{\td}(X_i)] \to 0 \mbox{ in probability }\Qd.
$$
According to Slutsky's theorem and $\sqrt{n}(\hat{\theta}-\td)=O_{\Qd}(1)$, we have
$$
\sqrt{n}(\hat{\theta}-\td)^\top n^{-\frac{3}{4}}\sum_{i=1}^n\nabla_{\theta} \log g_{\td}(X_i) \to 0\mbox{ in probability } \Qd.
$$
Consequently,
$$
\lim_{n\to\infty} \Qd\Big((\hat{\theta}-\td)^\top\sum_{i=1}^n[\nabla_{\theta} \log g_{\td}(X_i) - E^{\Qd}\nabla_{\theta}g_{\td}(X_i)] >\frac{\sqrt{n}}{4} \Big)=0.
$$
According to \eqref{eq:first-order-term} and the above display, we have
$$
\lim_{n\to\infty}\Qd\Big((\hat{\theta}-\td)^\top\sum_{i=1}^n\nabla_{\theta} \log g_{\td}(X_i)>\frac{\sqrt{n}}{4} \Big)=0.
$$
Thus, $\Qd(E_2^c)\to 0$ as $n\to \infty$.
We provide an upper bound of $\Qd(E_3^c)$ using a similar method.
With the aid of Chebyshev's inequality, we have
$$n^{-\frac{5}{4}}\sum_{i=1}^n\sup_{\theta} |\nabla^2_{\theta} \log g_{\theta}(X_i)| {\to} 0 \mbox{ in probability } \Qd.$$
According to Slutsky's theorem and  $\sqrt{n}(\hat{\theta}-\td)=O_{\Qd}(1)$, we have
$$
n|\hat{\theta}-\td|^2\times n^{-\frac{5}{4}} \sum_{i=1}^n\sup_{\theta} |\nabla^2_{\theta} \log g_{\theta}(X_i)|\overset{d}{\to} 0.
$$
Consequently,
$$
\lim_{n\to\infty}\Qd\Big(|\hat{\theta}-\td|^2\sup_{\theta} \sum_{i=1}^n |\nabla^2_{\theta} \log g_{\theta}(X_i)|>\frac{\sqrt{n}}{4}\Big)\leq 
\lim_{n\to\infty}\Qd\Big(|\hat{\theta}-\td|^2 \sum_{i=1}^n\sup_{\theta} |\nabla^2_{\theta} \log g_{\theta}(X_i)|>\frac{\sqrt{n}}{4}\Big)=0.
$$
Therefore, $\Qd(E_3)\to 0$ as $n\to \infty$.
We combine the results for $\Qd(E_2^c),\Qd(E_3^c)$, $\Qd(E_4)$, and \eqref{eq:I-lower2},
$$
I\geq \frac{\varepsilon_0}{2} e^{-|\ld|\sqrt{n}} \mbox{ for } n \mbox{ sufficiently large.}
$$
Combining the above display with \eqref{eq: change of measure for lower bound}, we arrive at the lower bound
$$
P_{g_{\theta_0}}(LR_n>e^{nb})\geq e^{-n(1+o(1))\rd}.
$$
We complete the proof by combining the lower bound and upper bound for the probability
$P_{g_{\theta_0}}(LR_n>1)$.

\bigskip
\section{Proof of Theorem~\ref{thm:error-glm}}
The proof is similar to that of Theorem \ref{thm:error-under-theta0}.
Throughout the proof, we will use $\kappa$ as a generic notation to denote large and not-so-important
constants whose value may vary from place to place. Similarly, we use $\varepsilon$ as a generic notation for
small positive constants.
The proof of the theorem consists of establishing upper and lower bounds for the probability
$$
P_{\beta^0}(LR_n\geq1) = P_{\beta^0}\Big(\sup_{\gamma}\inf_{\beta} \sum_{i=1}^n [\log h_i(Y_i, \gamma)-\log g_i(Y_i,\beta)]\geq0\Big).
$$

\paragraph{Upper bound}
Similar to \eqref{eq:upper-bound-first}, we have
$$
P_{\beta^0}(LR_n\geq 1)\leq P_{\beta^0}\Big(\sup_{\gamma} \sum_{i=1}^n [\log h_i(Y_i, \gamma)-\log g_i(Y_i,\beta_n^{\dagger})]\geq0\Big)
$$
According to the definition of $h_i(Y_i,\gamma)$ and $g_i(Y_i,\beta)$, we have 
$$
\sum_{i=1}^n [\log h_i(Y_i, \gamma)-\log g_i(Y_i,\beta)]
= \sum_{i=1}^n [\gamma^T Z^{(i)}Y_i - b(\gamma^T Z^{(i)})] - \sum_{i=1}^n [\beta_n^{\dagger T} X^{(i)}Y_i - b(\beta_n^{\dagger T} X^{(i)})]. 
$$
Consequently, we have
\begin{equation}\label{eq:prob-an}
P_{\beta^0}(LR_n\geq1) \leq  P_{\beta^0}\Big(
(\frac{1}{n}\sum_{i=1}^n Z^{(i)}Y_i,\frac{1}{n}\sum_{i=1}^n X^{(i)}Y_i )\in A_{n}
\Big),
\end{equation}
where
$$
A_{n} = \Big\{
(s_1,s_2): s_1\in R^{p}, s_2\in R^q~\mbox{and}~
\sup_{\gamma} [\gamma^T s_2 -\frac{1}{n}\sum_{i=1}^n b(\gamma^T Z^{(i)}) ]
\geq 
[\beta_n^{\dagger T} s_1 -\frac{1}{n}\sum_{i=1}^n b(\beta_n^{\dagger T} X^{(i)}) ]	
\Big\}.
$$
We consider the change of measure
\begin{equation}\label{eq:relative-drivative}
\frac{dQ^{\dagger}}{dP} 
= 
\exp\Big\{
\lambda_n^{\dagger } \sum_{i=1}^{n}(\gamma_n^{\dagger T}Z^{(i)}Y_i- \beta_n^{\dagger T}X^{(i)}Y_i) - \sum_{i=1}^n[b(({\beta^{0}})^T X^{(i)}+\lambda_n^{\dagger}\{\gamma_n^{\dagger T} Z^{(i)}-\beta_n^{\dagger T}X^{(i)}\}) -b({(\beta^{0})}^{T}X^{(i)})] 
\Big\}.
\end{equation}
According to \eqref{eq:prob-an}, we have
$$
P_{\beta^0}(LR_n\geq1)\leq E^{Q^\dagger}\Big[
\frac{dP}{dQ^{\dagger}}; (\frac{1}{n}\sum_{i=1}^n Z^{(i)}Y_i,\frac{1}{n}\sum_{i=1}^n X^{(i)}Y_i )\in A_{n}.
\Big]
$$
The above display and \eqref{eq:relative-drivative} together gives
\begin{multline}\label{eq:upper-change}
P_{\beta^0}(LR_n\geq1) \leq \exp\Big\{\sum_{i=1}^n\Big[b\Big({(\beta^{0})}^T X^{(i)}+\lambda_n^{\dagger}\{\gamma_n^{\dagger T} Z^{(i)}-\beta_n^{\dagger T}X^{(i)}\}\Big) -b\Big({(\beta^{0})}^{T}X^{(i)}\Big)\Big]\Big\}\\
\times 
E^{Q^{\dagger}}\Big[e^{-\lambda_n^{\dagger } \sum_{i=1}^{n}(\gamma_n^{\dagger T}Z^{(i)}Y_i- \beta_n^{\dagger T}X^{(i)}Y_i)}; (\frac{1}{n}\sum_{i=1}^n Z^{(i)}Y_i,\frac{1}{n}\sum_{i=1}^n X^{(i)}Y_i )\in A_n\Big].
\end{multline}
The next lemma shows a property of $\beta_n^{\dagger}$ and $A_n$.
	\begin{lemma}\label{lemma:convex-an}
		%Let $$
		%	(s^{\dagger}_1, s^{\dagger}_2) = \Big( \sum_{i=1}^n b'({\beta^{0}}^T X^{(i)}+\lambda^{\dagger}\{\gamma^{\dagger T}Z^{(i)} - \beta^{\dagger T}X^{(i)}\})X^{(i)},\sum_{i=1}^n b'({\beta^{0}}^T X^{(i)}+\lambda^{\dagger}\{\gamma^{\dagger T}Z^{(i)} - \beta^{\dagger T}X^{(i)}\})Z^{(i)} \Big),
		%	$$
		%	then $(s^{\dagger}_1, s^{\dagger}_2)\in A_{n}$. Moreover, we have
		%	\begin{equation}\label{eq:matching}	
		%	\gamma^{\dagger}=\arg\sup_{\gamma} [ \gamma^T s_1^{\dagger} -\frac{1}{n}\sum_{i=1}^n b(\gamma^T Z^{(i)})],
		%	\end{equation}
		For all $(s_1,s_2)\in A_n$,
		$$
		[\gamma^{\dagger T} s_2 -\frac{1}{n}\sum_{i=1}^n b(\gamma^{\dagger T} Z^{(i)}) ]
		\geq 
		[\beta_n^{\dagger T} s_1 -\frac{1}{n}\sum_{i=1}^n b(\beta_n^{\dagger T} X^{(i)}) ].	
		$$
	\end{lemma}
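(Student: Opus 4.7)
The plan is to combine the saddle-point characterization of $(\beta_n^{\dagger}, \gamma_n^{\dagger}, \lambda_n^{\dagger})$ as the solution to $\sup_{\beta\in B_n}\inf_{\gamma}\sup_{\lambda} \widetilde{\rho}_n(\beta,\gamma,\lambda)$ with the convexity of $b(\cdot)$ and Fenchel duality. First I would write out the KKT first-order conditions at the saddle point. By the envelope theorem applied to the inner $\sup_{\lambda}$, stationarity in $\gamma$ at $\gamma_n^{\dagger}$ yields
\[
\frac{1}{n}\sum_{i=1}^n Z^{(i)}\Bigl[b'(\gamma_n^{\dagger T}Z^{(i)}) - b'\bigl((\beta^0)^T X^{(i)} + \lambda_n^{\dagger}(\gamma_n^{\dagger T}Z^{(i)} - \beta_n^{\dagger T}X^{(i)})\bigr)\Bigr]=0,
\]
together with the analogous identity in $\beta$ at $\beta_n^{\dagger}$. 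These identities identify $(\beta_n^{\dagger}, \gamma_n^{\dagger})$ as the MLEs of the two families under the tilted measure $Q^{\dagger}$ defined in \eqref{eq:relative-drivative}, and they pin down the slope of the Legendre tangent to $\Phi$ at $\gamma_n^{\dagger}$ in terms of the $Q^{\dagger}$-expected sufficient statistic.

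Next I would reformulate $A_n$ through Fenchel duality. Writing $\Phi(\gamma):=\frac{1}{n}\sum_{i=1}^n b(\gamma^T Z^{(i)})$ and $\bar b(\beta):=\frac{1}{n}\sum_{i=1}^n b(\beta^T X^{(i)})$, the membership $(s_1,s_2)\in A_n$ reads $\Phi^*(s_2)\geq \beta_n^{\dagger T}s_1 - \bar b(\beta_n^{\dagger})$, where $\Phi^*$ is the Legendre conjugate of $\Phi$. Fenchel's inequality gives the pointwise bound $\gamma_n^{\dagger T}s_2 - \Phi(\gamma_n^{\dagger}) \leq \Phi^*(s_2)$, with equality iff $s_2 = \nabla\Phi(\gamma_n^{\dagger})$. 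The lemma strengthens this into a \emph{uniform} lower bound over $A_n$. My plan is to combine the two KKT identities above, weighted by $\lambda_n^{\dagger}\geq 0$ (nonnegativity coming from $\beta_n^{\dagger}\in B_n$ together with the definition of $\widetilde{\rho}_n^{\dagger}$), and invoke convexity of $b$ to produce the required affine inequality in $(s_1,s_2)$. A subgradient inequality for $\Phi$ applied at $\gamma_n^{\dagger}$ then turns the constraint on the supremum into the inequality evaluated at $\gamma_n^{\dagger}$.

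The main obstacle I foresee is that $A_n = \bigcup_{\gamma}\{\gamma^T s_2 - \Phi(\gamma) \geq \beta_n^{\dagger T}s_1 - \bar b(\beta_n^{\dagger})\}$ is a \emph{union} of affine half-spaces, whereas the lemma asserts containment in the single half-space determined by $\gamma_n^{\dagger}$. This cannot be resolved by convex-analytic arguments applied to $\Phi^*$ alone; it must exploit the specific minimax coupling of $\beta_n^{\dagger}$ and $\gamma_n^{\dagger}$, together with the $\lambda_n^{\dagger}\geq 0$ constraint, so that the linear functional $(s_1,s_2)\mapsto \beta_n^{\dagger T}s_1 - \gamma_n^{\dagger T}s_2$ is uniformly bounded above by $\bar b(\beta_n^{\dagger})-\Phi(\gamma_n^{\dagger})$ on $A_n$. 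I expect the final step to follow from a supporting-hyperplane argument using the stationarity of the minimax value at $(\beta_n^{\dagger}, \gamma_n^{\dagger}, \lambda_n^{\dagger})$, which rules out any $(s_1,s_2)\in A_n$ violating the claimed inequality.
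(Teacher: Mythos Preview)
Your plan is essentially the paper's. The paper also derives the first-order conditions at the saddle point, identifies the point $(s_1^\dagger,s_2^\dagger)$ with $s_j^\dagger$ equal to the $Q^\dagger$-mean of the sufficient statistic, shows that $\gamma_n^\dagger$ is the argmax of $\gamma\mapsto \gamma^T s_2^\dagger-\Phi(\gamma)$ there (your KKT step), uses the $\lambda$-stationarity to get $w(s_1^\dagger,s_2^\dagger)=0$ so that this is a boundary point of $A_n$, and then finishes with a supporting-hyperplane argument at $(s_1^\dagger,s_2^\dagger)$. So your instinct about the final step is exactly what the paper does.

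The concrete ingredient you are missing, and which the paper supplies, is the claim that $A_n$ itself is convex; once that is asserted, the supporting hyperplane with normal $\nabla w(s_1^\dagger,s_2^\dagger)=(-\beta_n^\dagger,\gamma_n^\dagger)$ gives $(s-s^\dagger)\cdot\nabla w\ge 0$ for all $s\in A_n$, and combining with $w(s_1^\dagger,s_2^\dagger)=0$ yields the inequality of the lemma. Your worry that ``$A_n$ is a union of half-spaces'' is well placed, however: in your notation $A_n=\{w\ge 0\}$ with $w(s_1,s_2)=\Phi^*(s_2)-\beta_n^{\dagger T}s_1+\bar b(\beta_n^\dagger)$, and $w$ is convex, so $\{w\le 0\}$ is convex but $\{w\ge 0\}$ is not in general. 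The paper simply asserts convexity of $A_n$ without further justification, and your hoped-for ``minimax coupling'' does not obviously repair this (the first-order conditions only pin down $\gamma_n^\dagger$ as the argmax at the single point $s_2^\dagger$, not globally). So you have correctly isolated the delicate point; the paper's proof and your plan coincide, and both hinge on this convexity/supporting-hyperplane step, which you should try to justify carefully rather than take for granted.
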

According to Lemma~\ref{lemma:convex-an}, the right-hand side of \eqref{eq:upper-change} is further bounded above by
\begin{multline}
P_{\beta^0}(LR_n\geq1)
\\
 \leq \exp\Big\{\sum_{i=1}^n\Big[b\Big({(\beta^{0})}^T X^{(i)}+\lambda_n^{\dagger}\{\gamma_n^{\dagger T} Z^{(i)}-\beta_n^{\dagger T}X^{(i)}\}\Big) -b({(\beta^{0})}^{T}X^{(i)})\Big]
-\lambda_n^{\dagger}\sum_{i=1}^n\Big[b\Big(\gamma_n^{\dagger T}Z^{(i)}\Big)-b\Big(\beta_n^{\dagger T}X^{(i)}\Big)\Big]\Big\} \\
\times Q^{\dagger}\Big[(\frac{1}{n}\sum_{i=1}^n Z^{(i)}Y_i,\frac{1}{n}\sum_{i=1}^n X^{(i)}Y_i )\in A_n\Big].
\end{multline}
Because $Q^{\dagger}\Big[(\frac{1}{n}\sum_{i=1}^n Z^{(i)}Y_i,\frac{1}{n}\sum_{i=1}^n X^{(i)}Y_i )\in A_n\Big]\leq 1$, we arrive at
$$
P_{\beta^0}(LR_n\geq1) \leq \exp\Big\{\sum_{i=1}^n[b({(\beta^{0})}^T X^{(i)}+\lambda_n^{\dagger}\{\gamma_n^{\dagger T} Z^{(i)}-\beta_n^{\dagger T}X^{(i)}\}) -b({(\beta^{0})}^{T}X^{(i)})]-\lambda_n^{\dagger}\sum_{i=1}^n[b(\gamma_n^{\dagger T}Z^{(i)})-b(\beta_n^{\dagger T}X^{(i)})]\Big\}.
$$
According to the definition of $\widetilde{\rho}_n^{\dagger}$, the right-hand side of the above inequality equals $e^{-n\widetilde{\rho}_n^{\dagger}}$. Therefore, we arrive at the upper bound 
$$
P_{\beta^0}(LR_n\geq 1) \leq e^{-n \widetilde{\rho}_n^{\dagger}}.
$$
\paragraph{Lower bound}
Notice that the event
$$
\{\sum_{i=1}^n \log h_i(Y_i,\gamma_n^{\dagger}) - \sup_{\beta}\sum_{i=1}^n\log g_i(Y_i,\beta) \geq 0 \}.
$$
implies the event
$$
\{\sup_{\gamma}\sum_{i=1}^n \log h_i(Y_i,\gamma) - \sup_{\beta}\sum_{i=1}^n\log g_i(Y_i,\beta) \geq 0 \}.
$$
Therefore, a lower bound for the probability $P_{\beta^0}(LR_n\geq 1)$ is
$$
P_{\beta^0}\Big(\sum_{i=1}^n \log h_i(Y_i,\gamma_n^{\dagger}) - \sup_{\beta}\sum_{i=1}^n\log g_i(Y_i,\beta) \geq 0 \Big).
$$
According to the definition of $Q^{\dagger}$ in \eqref{eq:relative-drivative}, the above probability equals
\begin{multline}\label{eq:lower-bound}
\exp\Big\{\sum_{i=1}^n[b({(\beta^{0})}^T X^{(i)}+\lambda_n^{\dagger}\{\gamma_n^{\dagger T} Z^{(i)}-\beta_n^{\dagger T}X^{(i)}\}) -b({(\beta^{0})}^{T}X^{(i)})]\Big\}\\
\times E^{Q^{\dagger}}\Big[e^{-\lambda_n^{\dagger } \sum_{i=1}^{n}(\gamma_n^{\dagger T}Z^{(i)}Y_i- \beta_n^{\dagger T}X^{(i)}Y_i)};E\Big],
\end{multline}
where the event
$$
E=\Big\{\sum_{i=1}^{n}\gamma_n^{\dagger T}Z^{(i)}Y_i- \hat{\beta}_n^T X^{(i)}Y_i- b(\gamma_n^{\dagger T}X^{(i)})+ b(\hat{\beta}_n^TX^{(i)})\geq 0\Big\},
$$
and $\hat{\beta}_n$ is the maximum likelihood estimator 
$$
\hat{\beta}_n=\arg\sup_{\beta} \sum_{i=1}^n \beta^T X^{(i)}Y_i -b(\beta X^{(i)}).
$$
Notice that
$$
e^{-n\widetilde{\rho}_n^{\dagger}}= \exp\Big\{\sum_{i=1}^n[b({(\beta^{0})}^T X^{(i)}+\lambda_n^{\dagger}\{\gamma_n^{\dagger T} Z^{(i)}-\beta_n^{\dagger T}X^{(i)}\}) -b({(\beta^{0})}^{T}X^{(i)})]-\lambda_n^{\dagger}[b(\gamma_n^{\dagger T} Z^{(i)})-b(\beta_n^{\dagger T}X^{(i)})] \Big\}.
$$
Therefore, 
\begin{equation}\label{eq:lower-bound-glm}
P_{\beta^0}(LR_n\geq1)\geq 
e^{-n\widetilde{\rho}_n}\times J,
\end{equation}
where we define the quantity
$$
J=
E^{Q^{\dagger}}\Big[e^{-\lambda_n^{\dagger } [\sum_{i=1}^{n}\gamma_n^{\dagger T}Z^{(i)}Y_i- \beta_n^{\dagger T}X^{(i)}Y_i- b(\gamma_n^{\dagger T}X^{(i)})+ b(\beta_n^{\dagger T}X^{(i)})]}; E\Big].$$
We proceed to establishing a lower bound of $J$.
%\begin{equation}\label{eq:lower-want}
%J\geq e^{-\lambda_nn^{\frac{3}{4}}}.
%\end{equation}
We consider two events
$$
E_1= \Big\{ 
\frac{\sqrt{n}}{2}<\sum_{i=1}^{n}\gamma_n^{\dagger T}Z^{(i)}Y_i- \beta_n^{\dagger T}X^{(i)}Y_i- b(\gamma_n^{\dagger T}X^{(i)})+ b(\beta_n^{\dagger T}X^{(i)}) \leq \sqrt{n}
\Big\}
$$
and
$$
E_2= \Big\{\sum_{i=1}^n[\hat{\beta}_n^T X^{(i)}Y_i -\beta_n^{\dagger T} X^{(i)}Y_i -b(\hat{\beta}_n^TX^{(i)}) +b(\beta_n^{\dagger T}X^{(i)})] \leq \frac{\sqrt{n}}{2} \Big\}.
$$
Because $E_1$ together with $E_2$ implies $E$, we have $E\supset E_1\cap E_2$. Consequently, 
$$
J\geq E^{Q^{\dagger}}\Big[e^{-\lambda_n^{\dagger } [\sum_{i=1}^{n}\gamma_n^{\dagger T}Z^{(i)}Y_i- \beta_n^{\dagger T}X^{(i)}Y_i- b(\gamma_n^{\dagger T}X^{(i)})+ b(\beta_n^{\dagger T}X^{(i)})]}; E_1\cap E_2\Big].
$$
Notice that on the set $E_1$, 
$\sum_{i=1}^{n}\gamma_n^{\dagger T}Z^{(i)}Y_i- \beta_n^{\dagger T}X^{(i)}Y_i- b(\gamma_n^{\dagger T}X^{(i)})+ b(\beta_n^{\dagger T}X^{(i)})\leq \sqrt{n}$. Therefore, 
\begin{equation}\label{eq:J-lower}
J\geq e^{-\lambda_n^{\dagger} \sqrt{n}} Q^{\dagger}(E_1\cap E_2)\geq e^{-\lambda_n^{\dagger} \sqrt{n}}\Big(Q^{\dagger}(E_1)- Q^{\dagger}(E_2^c)\Big).
\end{equation}
We provide an upper bound for $Q^{\dagger}(E_1)$ and a lower bound for $ Q^{\dagger}(E_2^c)$.
\begin{lemma}\label{lemma:clt}
Let
$$
v_n= 
Var^{Q^{\dagger}}\Big(\sum_{i=1}^{n}\gamma_n^{\dagger T}Z^{(i)}Y_i- \beta_n^{\dagger T}X^{(i)}Y_i- b(\gamma_n^{\dagger T}X^{(i)})+ b(\beta_n^{\dagger T}X^{(i)})\Big), 
$$
then 
$v_n=O(n)$ as $n\to\infty$.
Furthermore, we have
$$
\mathcal{L}\Big(v_{n}^{-\frac{1}{2}}\Big[\sum_{i=1}^{n}\gamma_n^{\dagger T}Z^{(i)}Y_i- \beta_n^{\dagger T}X^{(i)}Y_i- b(\gamma_n^{\dagger T}X^{(i)})+ b(\beta_n^{\dagger T}X^{(i)})\Big]\Big)\to N(0,1).
$$
Here, $\mathcal{L}(\cdot)$ denotes the law of random variables and $N(0,1)$ is the distribution of standard normal.	
\end{lemma}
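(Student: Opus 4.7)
The plan is to exploit the fact that the tilted measure $Q^{\dagger}$ in \eqref{eq:relative-drivative} factorizes across $i$ and only shifts the natural parameter of the underlying exponential family, so that under $Q^{\dagger}$ the responses $Y_i$ remain conditionally independent and classical CLT machinery for sums of independent (but not identically distributed) variables applies. First I would identify the law of $Y_i$ under $Q^{\dagger}$: the Radon--Nikodym derivative tilts the $i$th conditional density of $Y_i$ by $\exp\{\lambda_n^{\dagger}(\gamma_n^{\dagger T}Z^{(i)}-\beta_n^{\dagger T}X^{(i)})y\}$, so under $Q^{\dagger}$ the variable $Y_i$ still follows the exponential family $\exp\{\eta_i y-b(\eta_i)+c(y)\}$ with natural parameter $\eta_i=(\beta^0)^T X^{(i)}+\lambda_n^{\dagger}(\gamma_n^{\dagger T}Z^{(i)}-\beta_n^{\dagger T}X^{(i)})$. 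Assumptions A8 and A10 force $|\eta_i|\leq \kappa$ uniformly in $i$ and $n$, so by A11 both $b'(\eta_i)$ and $b''(\eta_i)$ lie in a bounded interval, and all moments of $Y_i$ are uniformly bounded via the exponential-family MGF $E^{Q^{\dagger}}[e^{tY_i}]=e^{b(\eta_i+t)-b(\eta_i)}$.

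Writing $c_i=\gamma_n^{\dagger T}Z^{(i)}-\beta_n^{\dagger T}X^{(i)}$ and $W_i=c_iY_i-b(\gamma_n^{\dagger T}Z^{(i)})+b(\beta_n^{\dagger T}X^{(i)})$, the constants drop out of the variance and $\mathrm{Var}^{Q^{\dagger}}(W_i)=c_i^2\, b''(\eta_i)$. Since $|c_i|\leq 2\kappa_1\kappa_2$ and $b''(\eta_i)$ is uniformly bounded, summing yields $v_n\leq Cn$, which is the first claim. The first-order condition for $\lambda_n^{\dagger}$ read from \eqref{eq:lambda-derivative} at $(\beta_n^{\dagger},\gamma_n^{\dagger},\lambda_n^{\dagger})$ gives $\sum_i\{b(\gamma_n^{\dagger T}Z^{(i)})-b(\beta_n^{\dagger T}X^{(i)})\}=\sum_i b'(\eta_i)c_i=\sum_i E^{Q^{\dagger}}(c_iY_i)$, equivalently $\sum_i E^{Q^{\dagger}}W_i=0$, so the statistic in the lemma is already centered.

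For the CLT I would apply the Lyapunov version to the independent triangular array $\{W_i\}$. The uniform boundedness of $\eta_i$ and A11 give $E^{Q^{\dagger}}|W_i|^{2+\delta}\leq \kappa$ for any fixed $\delta>0$, so Lyapunov's condition reduces to verifying the matching lower bound $v_n\geq \varepsilon n$. This is the main obstacle, and I expect to establish it by a contradiction argument: if $n^{-1}\sum_i c_i^2\to 0$ along a subsequence, then Taylor-expanding $\widetilde{\rho}_n(\beta_n^{\dagger},\gamma_n^{\dagger},\lambda)$ around $\lambda=0$ and controlling the remainder by the bounds in A11 (using that $\lambda_n^{\dagger}$ is bounded by A8) shows that $\widetilde{\rho}_n(\beta_n^{\dagger},\gamma_n^{\dagger},\lambda_n^{\dagger})\to 0$, which contradicts Assumption A9 combined with $\widetilde{\rho}_n^{\dagger}\geq \inf_\gamma\sup_\lambda\widetilde{\rho}_n(\beta^0,\gamma,\lambda)>\delta_1$. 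With $v_n\asymp n$, the Lyapunov ratio $v_n^{-(2+\delta)/2}\sum_i E^{Q^{\dagger}}|W_i|^{2+\delta}=O(n^{-\delta/2})\to 0$, and convergence of $v_n^{-1/2}\sum_i W_i$ to $N(0,1)$ follows.
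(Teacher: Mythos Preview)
Your proposal is correct and follows essentially the same approach as the paper: both identify $Q^{\dagger}$ as an exponential tilt so that $\mathrm{Var}^{Q^{\dagger}}(Y_i)=b''(\eta_i)$ and hence $v_n=\sum_i c_i^2\,b''(\eta_i)$, bound $v_n$ above via A8/A10/A11, bound it below via A9 (the paper does this directly by showing $|\widetilde\rho_n^{\dagger}|\le\kappa\,(n^{-1}\sum_i c_i^2)^{1/2}$, which is exactly your Taylor-expansion contradiction made explicit), and then verify the Lyapunov condition for the triangular array. Your explicit check of the centering $\sum_i E^{Q^{\dagger}}W_i=0$ from the first-order condition $\partial_\lambda\widetilde\rho_n=0$ is a detail the paper leaves implicit.
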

According to Lemma~\ref{lemma:clt}, there exists a constant $\varepsilon>0$ such that
\begin{equation}\label{eq:e1-upper}
Q^{\dagger}(E_1)\geq \varepsilon.
\end{equation}
We proceed to a lower bound for $Q^{\dagger}(E_2)$. Define the function for $\mu\in R^p$
$$
u(\mu, \beta) = (\beta-\beta_n^{\dagger})^T \mu -\sum_{i=1}^n [b(\beta^T X^{(i)}) -b(\beta_n^{\dagger}X^{(i)}) ].
$$
We further define the function
$$
v(\mu)= \sup_{\beta} u(\mu,\beta).
$$
\begin{lemma}\label{lemma:expand-u}
	Let
	\begin{equation*}\label{eq:def-s1d}
	\mu^{\dagger}=\sum_{i=1}^n b'\Big(\lambda_n^{\dagger}(\gamma_n^{\dagger T}Z^{(i)}-\beta_n^{\dagger} X^{(i)})+{(\beta^{0})}^{T} X^{(i)}\Big) X^{(i)},
	\end{equation*}
	then  $v(\mu)$ is twice continuous differentiable around $\mu^{\dagger}$, with $v(\mu^{\dagger})=0 $ and $\nabla v(\mu^{\dagger})=0$. Moreover, we have
	$$
	\nabla^2 v(\mu)= \Big[\sum_{i=1}^n b''\Big(\beta(\mu)^{T} X^{(i)}\Big)X^{(i)}X^{(i)T}\Big]^{-1},
	$$
	where $\beta(\mu) = \arg\sup_{\beta} u(\mu,\beta)$.
\end{lemma}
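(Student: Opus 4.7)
\textbf{Proof proposal for Lemma~\ref{lemma:expand-u}.}
The plan is to recognize $v$ as a shifted Fenchel conjugate of the strictly convex function $\Phi(\beta)\triangleq \sum_{i=1}^n b(\beta^T X^{(i)})$ and to exploit the stationarity of $\widetilde{\rho}_n$ at the saddle point $(\beta_n^{\dagger},\gamma_n^{\dagger},\lambda_n^{\dagger})$. Rewriting the objective,
\[
u(\mu,\beta) \;=\; \bigl[\beta^T\mu - \Phi(\beta)\bigr] \;-\; \beta_n^{\dagger T}\mu + \Phi(\beta_n^{\dagger}),
\]
so $v(\mu)=\Phi^*(\mu)-\beta_n^{\dagger T}\mu+\Phi(\beta_n^{\dagger})$, where $\Phi^*$ is the Legendre--Fenchel conjugate of $\Phi$. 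By Assumptions A10 and A11, $\Phi$ is strictly convex (the Hessian $\nabla^2\Phi(\beta)=\sum_i b''(\beta^T X^{(i)})X^{(i)}X^{(i)T}$ is positive definite uniformly on compact sets) and four-times continuously differentiable. Hence the maximizer $\beta(\mu)=\arg\sup_\beta u(\mu,\beta)$ is uniquely characterized by the first-order condition $\sum_{i=1}^n b'(\beta^T X^{(i)})X^{(i)}=\mu$, and the implicit function theorem guarantees that $\beta(\mu)$ is $C^2$ in a neighborhood of any $\mu$ for which this equation is solvable with bounded solution. This already delivers the $C^2$-regularity of $v$.

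The key identity $\beta(\mu^{\dagger})=\beta_n^{\dagger}$ would follow from the stationarity of $\widetilde{\rho}_n$ in $\beta$ at the saddle point. A direct computation yields
\[
\frac{\partial \widetilde{\rho}_n}{\partial \beta}(\beta,\gamma,\lambda)
\;=\; \frac{\lambda}{n}\sum_{i=1}^n \Bigl[ b'\bigl((\beta^0)^T X^{(i)}+\lambda(\gamma^T Z^{(i)}-\beta^T X^{(i)})\bigr) - b'(\beta^T X^{(i)}) \Bigr] X^{(i)}.
\]
Granting that $\beta_n^{\dagger}$ is interior to $B_n$ and that $\lambda_n^{\dagger}\neq 0$ (the latter being forced by A9, since $\widetilde{\rho}_n(\beta,\gamma,0)\equiv 0$ would contradict $\widetilde{\rho}_n^{\dagger}\geq \delta_1$), the saddle-point condition sets the above display to zero at $(\beta_n^{\dagger},\gamma_n^{\dagger},\lambda_n^{\dagger})$, which gives exactly $\sum_{i=1}^n b'(\beta_n^{\dagger T} X^{(i)})X^{(i)}=\mu^{\dagger}$. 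By strict concavity of $u(\mu^{\dagger},\cdot)$, this forces $\beta(\mu^{\dagger})=\beta_n^{\dagger}$, and substituting back yields $v(\mu^{\dagger})=u(\mu^{\dagger},\beta_n^{\dagger})=0$.

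The derivatives of $v$ then follow from the envelope theorem and implicit differentiation. Since $\beta(\mu)$ is interior and $\partial u/\partial \beta$ vanishes there, we obtain $\nabla v(\mu)=\partial_\mu u(\mu,\beta(\mu))=\beta(\mu)-\beta_n^{\dagger}$, so in particular $\nabla v(\mu^{\dagger})=0$. Differentiating the first-order condition $\sum_i b'(\beta(\mu)^T X^{(i)})X^{(i)}=\mu$ in $\mu$ gives $\nabla^2\Phi(\beta(\mu))\,\nabla_\mu\beta(\mu)=I$, whence
\[
\nabla^2 v(\mu) \;=\; \nabla_\mu \beta(\mu) \;=\; \Bigl[\sum_{i=1}^n b''\bigl(\beta(\mu)^T X^{(i)}\bigr) X^{(i)}X^{(i)T}\Bigr]^{-1},
\]
as claimed.

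The main obstacle is rigorously justifying the $\beta$-stationarity at the saddle point, because the outer supremum in the definition of $\widetilde{\rho}_n^{\dagger}$ is constrained to $\beta\in B_n$, and a priori the maximizer could lie on $\partial B_n$, in which case only a one-sided (Euler-type) condition is immediate and the usual FOC may fail. Ruling this out would require a careful examination of the defining inequality of $B_n$ combined with A9--A11 to show that at any boundary candidate one can perturb $\beta$ into the interior of $B_n$ while strictly increasing the value, thereby placing $\beta_n^{\dagger}$ in the interior and validating the unconstrained stationarity used above.
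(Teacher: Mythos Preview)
Your proof is correct and follows essentially the same route as the paper's: both establish $\beta(\mu^{\dagger})=\beta_n^{\dagger}$ from the $\beta$-stationarity of $\widetilde{\rho}_n$ at the saddle point, then obtain $\nabla v(\mu)=\beta(\mu)-\beta_n^{\dagger}$ via the envelope theorem and the Hessian formula via implicit differentiation of the first-order condition $\sum_i b'(\beta(\mu)^T X^{(i)})X^{(i)}=\mu$. Your Fenchel-conjugate framing is a clean way to package the regularity statement, but the underlying mechanics are identical.

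On the boundary obstacle you flag: the paper does not carry out the analysis you sketch. It simply asserts $\nabla_\beta \widetilde{\rho}_n(\beta_n^{\dagger},\gamma_n^{\dagger},\lambda_n^{\dagger})=0$ by citing the analogous Euler-condition argument from Lemma~\ref{lemma:consistent}, without separately examining whether $\beta_n^{\dagger}$ could lie on $\partial B_n$. So you are being more scrupulous than the paper here; the concern is legitimate but does not set your argument apart from the published one.
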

According to Lemma~\ref{lemma:expand-u} and Taylor expansion of $v(\mu)$ around $\mu^{\dagger}$, we have
\begin{equation}\label{eq:event-bound}
\Big\{v(\mu)\geq \frac{\sqrt{n}}{2} \Big\}\subset \Big\{
\frac{1}{2}\|\mu-\mu^{\dagger}\|^2 \|\nabla^2 v(\mu^{\dagger}) \|_2 \geq \frac{\sqrt{n}}{2}
\Big\},
\end{equation}
where $\|\cdot \|_2$ is denotes the spectral norm of matrices.
According to Lemma~\ref{lemma:expand-u} and Assumptions A10 and A11, $\|\nabla^2 v(\mu^{\dagger}) \|_2 =O(n) $. Therefore, \eqref{eq:event-bound} implies
$$
\Big\{v(\mu)\geq \frac{\sqrt{n}}{2} \Big\}\subset \Big\{ \|\mu-\mu^{\dagger}\|\geq \varepsilon n^{\frac{3}{4}} \Big\}.
$$
Notice that the event $E_2^c = \{v(\sum_{i=1}^n X^{(i)}Y_i)\geq \frac{\sqrt{n}}{2} \}$, we have
$$
Q^{\dagger}(E_2^c)\leq Q^{\dagger}\Big(\|\sum_{i=1}^n X^{(i)}Y_i-\mu^{\dagger}\|\geq \varepsilon n^{\frac{3}{4}} \Big).
$$
With the aid of Chebyshev's inequality, the above display implies
$$
Q^{\dagger}(E_2^c)\leq (\varepsilon^{-2} n^{-\frac{3}{2}}) E^{Q^{\dagger}} \|\sum_{i=1}^n X^{(i)}Y_i-\mu^{\dagger}\|^2
$$
Because $E^{Q^{\dagger}} \|\sum_{i=1}^n X^{(i)}Y_i-\mu^{\dagger}\|^2= O(n)$, we have
$Q^{\dagger}(E_2^c)$ tend to zero as $n$ goes to infinity. Combining this result with \eqref{eq:J-lower} and \eqref{eq:e1-upper}, we arrive at a lower bound for $J$
$$
J\geq \frac{\varepsilon}{2} e^{-\lambda_n^{\dagger}\sqrt{n}}.
$$
The above inequality together with \eqref{eq:lower-bound-glm} gives a lower bound
\begin{equation}\label{eq:lower}
P(LR_n\geq1)\geq \frac{\varepsilon}{2} e^{-n\widetilde{\rho}_n^{\dagger}-\lambda_n^{\dagger}\sqrt{n}}.
\end{equation}
According to Assumption A9, $\widetilde{\rho}_n^{\dagger}\geq \inf_{\gamma}\sup_{\lambda}\widetilde{\rho}_n(\beta^0,\gamma,\lambda)\geq \delta_1$, so $\lambda_n^{\dagger}\sqrt{n}=o(1)n\widetilde{\rho}_n^{\dagger}$. Therefore, \eqref{eq:lower} implies $P_{\beta^0}(LR_n\geq 1)\geq e^{-n\widetilde{\rho}_n^{\dagger}(1+o(1))} .$
We complete the proof by combining the lower and upper bound for $P_{\beta^0}(LR_n\geq 1)$

\section{Proof of Lemma~\ref{lemma:consistent}}\label{sec:proof-for-lemma}
\begin{proof}[Proof of Lemma~\ref{lemma:consistent}]
According to condition A6, it is sufficient to show that for all $y\in T_{\gd}\Gamma$,
\begin{equation}\label{eq:check-gamma-condition}
E^{\Qd}y^\top\nabla_{\gamma} h_{\gd}(X)\leq 0,
\end{equation}
and for all $y\in T_{\td}\Theta $,
\begin{equation}\label{eq:check-theta-condtion}
E^{\Qd} y ^\top\nabla_{\theta} g_{\td}(X)\leq 0.
\end{equation}
We first prove \eqref{eq:check-gamma-condition}. 
We discuss two cases: $\gd\in int(\Gamma)$ and $\gd\in \partial \Gamma$, where $int(\Gamma)$ denotes the interior of $\Gamma$.
\paragraph{Case 1: $\gd\in int(\Gamma)$} Because $\ld=\arg\inf_{\lambda}M_{g_{\theta_0}}(\td,\gd,\lambda)$, we have $\frac{\partial }{\partial \lambda}M_{g_{\theta_0}}(\td,\gd,\ld)=0$. According to the definition of $\gd$, $(\gd,\ld)$ is a  solution of the constrained optimization problem,
\begin{equation}\label{eq:opt-gamma}
\max_{\gamma,\lambda} M_{g_{\theta_0}}(\td,\gamma,\lambda) \mbox{ such that } \frac{\partial }{\partial \lambda}M_{g_{\theta_0}}(\td,\gamma,\lambda)=0,
\end{equation}
and thus it satisfies the Karush-Kuhn-Tucker conditions. That is, there exists a constant $\mu$ such that
$$
\left\{
\begin{array}{lcl}
\nabla_{\gamma}M_{g_{\theta_0}}(\td,\gd,\ld)&=&\mu \nabla_{\gamma}\frac{\partial }{\partial \lambda}M_{g_{\theta_0}}(\td,\gd,\ld)\\
\frac{\partial }{\partial \lambda}M_{g_{\theta_0}}(\td,\gd,\ld)&=& \mu \frac{\partial^2 }{\partial^2 \lambda}M_{g_{\theta_0}}(\td,\gd,\ld)\\
\frac{\partial }{\partial \lambda}M_{g_{\theta_0}}(\td,\gd,\ld)&=&0
\end{array}.
\right.
$$
The second and third equations in the above display together imply that $\mu=0$. We plug $\mu=0$ to the first equation and obtain that
\begin{equation}\label{eq:case1-first-order}
\nabla_{\gamma}M_{g_{\theta_0}}(\td,\gd,\ld)=0.
\end{equation}
According to the definition of $M_{g_{\theta_0}}(\theta,\gamma,\lambda)$, we  have
\begin{equation}\label{eq:gradient-M-gamma}
\nabla_{\gamma}M_{g_{\theta_0}}(\theta,\gamma,\lambda)= \lambda E_{g_{\theta_0}} \exp\{\lambda(\log h_{\gamma}(X)-\log g_{\theta}(X)-b)\}\nabla_{\gamma}\log h_{\gamma}(X)/M_{g_{\theta_0}}^{\dagger}.
\end{equation}
We plug this in \eqref{eq:case1-first-order}, and obtain
$$
E^{\Qd} \nabla_{\gamma}\log h_{\gd}(X)=0.
$$
Consequently, for all $y\in R^{d_{h}}$, \eqref{eq:check-gamma-condition} holds.
\paragraph{Case 2: $\gd\in \partial \Gamma$} Because $\partial \Gamma$ is continuously differentiable, with possibly relabeling the coordinate of $\gamma$, there exists a continuously differentiable function $v:R^{d_{h}-1}\to R$ and $r>0$ such that
\begin{equation}\label{eq:D-v}
B(\gd,r)\cap \Gamma=\{\gamma\in B(\gd,r):\gamma_{d_{h}}\geq v(\gamma_1,...,\gamma_{d_{h}-1})\},
\end{equation}
where $B(\gd,r)=\{\gamma: |\gamma-\gd|\leq r \} $ is a closed ball centered around $\gd$. Similar to Case 1, we consider the constrained optimization problem \eqref{eq:opt-gamma} with the additional constraint
$$
\gamma_{d_{h}}\geq v(\gamma_1,...,\gamma_{d_{h}-1}).
$$
The definition of $\gd$ implies that $(\gd,\ld)$ is a local maximum to this optimization problem. Again, it  satisfies the Karush-Kuhn-Tucker conditions for optimization problem with inequality constraint. That is, there exists constant $\mu_1$ and $\mu_2$ such that $\mu_1\geq 0$ and
$$
\left\{
\begin{array}{lcl}
\frac{\partial}{\partial \gamma_i}M_{g_{\theta_0}}(\td,\gd,\ld)&=&\mu_1 \frac{\partial}{\partial \gamma_i} v(\gd_1,...,\gd_{d_{h}-1}) + 
\mu_2 \nabla_{\gamma}\frac{\partial }{\partial \lambda}M_{g_{\theta_0}}(\td,\gd,\ld) \mbox{ for } i=1,...,d_{h}-1\\
\frac{\partial}{\partial \gamma_d}M_{g_{\theta_0}}(\td,\gd,\ld)&=&-\mu_1  + 
\mu_2 \nabla_{\gamma}\frac{\partial }{\partial \lambda}M_{g_{\theta_0}}(\td,\gd,\ld) \\
\frac{\partial }{\partial \lambda}M_{g_{\theta_0}}(\td,\gd,\ld)&=& \mu_2 \frac{\partial^2 }{\partial^2 \lambda}M_{g_{\theta_0}}(\td,\gd,\ld)\\
\frac{\partial }{\partial \lambda}M_{g_{\theta_0}}(\td,\gd,\ld)&=&0
\end{array}.
\right.
$$
Similar to the Case 1, the third and the fourth equalities together imply that $\mu_2=0$. We plug this in the first and the second equalities and obtain that
\begin{equation}\label{eq:case2-first-order}
\nabla_{\gamma}M_{g_{\theta_0}}(\td,\gd,\ld)= \mu_1 (\nabla v(\gd_{1},...,\gd_{d_{h}-1})^{T},-1)^{T}.
\end{equation}
We now prove that $\gd$ satisfies \eqref{eq:check-gamma-condition}. Notice that $\partial \Gamma$ is continuously differentiable, therefore the tangent cone is
$$
T_{\gd}\Gamma=\{y\in R^{d_{h}}: y\cdot (\nabla v(\gd_1,...,\gd_{d_{h}-1})^T,-1)^T \leq 0 \}.
$$
Consequently, for all $y\in T_{\gd}\Gamma$, \eqref{eq:case2-first-order} implies
\begin{equation}\label{eq:gradiant-gamma}
\nabla_{\gamma}M_{g_{\theta_0}}(\td,\gd,\ld)\cdot y = \mu_1 y\cdot (\nabla v(\gd_1,...,\gd_{d_h-1})^T,-1)^T\leq 0.
\end{equation}
Notice that $$\frac{\partial}{\partial \lambda} M_{g_{\theta_0}}(\td,\gd,0)= E_{g_{\theta_0}}[\log h_{\gd}(X)-\log g_{\td}(X)-b]<0,$$ and $$\frac{\partial^2}{\partial^2 \lambda}M_{g_{\theta_0}}(\td,\gd,\lambda)= E_{g_{\theta_0}}\Big\{e^{\lambda[\log h_{\gamma}(X)-\log g_{\theta}(X)-b]}[\log h_{\gd}(X)-\log g_{\td}(X)-b]^2\Big\}>0.$$
Thus $\ld>0$.
We prove \eqref{eq:check-gamma-condition} by plugging \eqref{eq:gradient-M-gamma} in \eqref{eq:gradiant-gamma} and notice that $\ld>0$.
Now we proceed to the proof of \eqref{eq:check-theta-condtion}. Again, we consider two cases: $\gd\in int(\Gamma)$ and $\gd\in \partial \Gamma$.
\paragraph{Case 1: $\gd\in int(\Gamma)$.} According to the definition of $(\td,\gd,\ld)$ and \eqref{eq:case1-first-order}, $(\td,\gd,\ld)$ is a local minimum of the optimization problem
$$
\inf_{\theta,\gamma,\lambda} M_{g_{\theta_0}}(\theta,\gamma,\lambda) \mbox{ such that } \frac{\partial }{\partial \lambda}M_{g_{\theta_0}}(\theta,\gamma,\lambda)=0, \mbox{ and }\nabla_{\gamma}M_{g_{\theta_0}}(\theta,\gamma,\lambda)=0.
$$
We prove \eqref{eq:check-theta-condtion}  using a similar proof as that for \eqref{eq:check-gamma-condition} and treating $\td$ and $(\gd,\ld)$ as $\gd$ and $\ld$ respectively. The details are omitted.
\paragraph{Case 2: $\gd\in \partial\Gamma$.}
We will first transform the Case 2 to Case 1. Recall the definition of $v$ and $r$ in  \eqref{eq:D-v}, for $\gamma\in B(\gd,r)\cap \partial \Gamma$, we have
 $$\gamma_d=v(\gamma_1,...,\gamma_{d_h-1}).$$
Let $\Phi :R^{d_h}\to R^{d_{h}-1}
$ be a function such that 
$
\Phi(\gamma)=(\gamma_1,...,\gamma_{d_h-1})^T.
$
Let $\xi=\Phi(\gamma)$, and $\xd=\Phi(\gd)$, then for $\gamma\in B(\gd,r)\cap \partial \Gamma $, $\gamma=(\xi^T,v(\xi))^T$.
 We abuse the notation a little and write 
$$
\widetilde{M}_{g_{\theta_0}}(\theta,\xi,\lambda )= M_{g_{\theta_0}}(\theta,\gamma, \lambda),
$$
where $\gamma= (\xi^T,v(\xi))^T$. We further let $\Xi=\Phi(B(\gd,r)\cap \Gamma) $. We compute the partial derivatives of $\widetilde{M}_{g_{\theta_0}}(\theta,\xi,\lambda)$ at $(\td,\xd,\ld)$,
\begin{equation}\label{eq:nabla-xi}
\frac{\partial}{\partial \lambda}\widetilde{M}_{g_{\theta_0}}(\td,\xd,\ld)= \frac{\partial}{\partial \lambda}{M}_{g_{\theta_0}}(\td,\gd,\ld) =0, \mbox{ and }\nabla_{\xi}\widetilde{M}_{g_{\theta_0}}(\td,\xd,\ld)= \frac{d\gamma}{d\xi}(\xd) ^T \nabla_{\gamma} {M}_{g_{\theta_0}}(\td,\gd,\ld),
\end{equation}
where $\frac{d\gamma}{d\xi}$ is a $d_{h}\times (d_{h}-1)$ Jacobian matrix 
$$
\frac{d\gamma}{d\xi}=
\begin{bmatrix}
I_{d_h-1}\\
\nabla v(\xd)^T
\end{bmatrix},
$$
and $I_{d_h-1}$ is the $(d_h-1)\times (d_h-1)$ identity matrix.
We plug \eqref{eq:case2-first-order} and the above expression in \eqref{eq:nabla-xi}, and obtain
$$
\nabla_{\xi}\widetilde{M}_{g_{\theta_0}}(\td,\xd,\ld)= \mu_1 (\nabla v(\xd) -\nabla v(\xd))^T=0.
$$
Therefore, $(\td,\xd,\ld)$ is a local minimum under the constrained optimization problem
$$
\inf_{\theta,\xi,\lambda}\widetilde{M}_{g_{\theta_0}}(\theta,\xi,\lambda) \mbox{ such that } \nabla_{\xi}\widetilde{M}_{g_{\theta_0}}(\theta,\xi,\lambda)=0 \mbox{ and } \frac{\partial}{\partial \lambda}\widetilde{M}_{g_{\theta_0}}(\theta,\xi,\lambda)=0.
$$ 
We complete the proof by replacing $\gamma$ and $\Gamma$ by $\xi$ and $\Xi$ respectively in the proof for Case 1.
\end{proof}
\section{Proof of Lemma~\ref{lemma:convex-an}}
Define the function
$$
w(s_1,s_2)= \sup_{\gamma} [\gamma^T s_2 -\frac{1}{n}\sum_{i=1}^n b(\gamma^T Z^{(i)}) ]
-
[\beta_n^{\dagger T} s_1 -\frac{1}{n}\sum_{i=1}^n b(\beta_n^{\dagger T} X^{(i)}) ].
$$
Then $A_n=\{(s_1,s_2): w(s_1,s_2)\geq 0\}$. 
Let $$s_1^{\dagger}= \frac{1}{n}\sum_{i=1}^n b'\Big(\lambda_n^{\dagger}(\gamma_n^{\dagger T}Z^{(i)}-\beta_n^{\dagger T}X^{(i)})+{(\beta^{0})}^{T}X^{(i)} \Big) X^{(i)}\mbox{ and  }s_2^{\dagger}= \frac{1}{n}\sum_{i=1}^n b'\Big(\lambda_n^{\dagger}(\gamma_n^{\dagger T}Z^{(i)}-\beta_n^{\dagger T}X^{(i)})+{(\beta^{0})}^{T}X^{(i)} \Big) Z^{(i)}.
$$
With similar proof as that for \eqref{eq:check-gamma-condition}, we have that $\gamma_n^{\dagger}$ satisfies first order conditions
\begin{multline}\label{eq:first-beta} \nabla_{\gamma}\widetilde{\rho}_n(\beta_n^{\dagger},\gamma_n^{\dagger},\lambda_n^{\dagger})=\lambda_n^{\dagger}\frac{1}{n}\sum_{i=1}^n\Big[ 
	b'(\gamma_n^{\dagger T}Z^{(i)})Z^{(i)}- b'\Big(\lambda_n^{\dagger}(\gamma_n^{\dagger T}Z^{(i)}-\beta_n^{\dagger T}X^{(i)})+{(\beta^{0})}^{T}X^{(i)} \Big) Z^{(i)}
	\Big]=0_q.
\end{multline}
\eqref{eq:first-beta} is also the first order condition for the optimization problem
$$
\sup_{\gamma} [\gamma^T s_2^{\dagger} -\frac{1}{n}\sum_{i=1}^n b(\gamma^T Z^{(i)}) ]
-
[\beta_n^{\dagger T} s_1^{\dagger} -\frac{1}{n}\sum_{i=1}^n b(\beta_n^{\dagger T} X^{(i)}) ].
$$
Notice that this optimization is concave in $\gamma$. Therefore, $\gamma_n^{\dagger}$ is a solution of the above optimization problem, and
\begin{multline}\label{eq:s-gamma}
w(s_1^{\dagger},s_2^{\dagger}) = 
\sup_{\gamma} [\gamma^T s_2^{\dagger} -\frac{1}{n}\sum_{i=1}^n b(\gamma^T Z^{(i)}) ]
-
[\beta_n^{\dagger T} s_1^{\dagger} -\frac{1}{n}\sum_{i=1}^n b(\beta_n^{\dagger T} X^{(i)}) ]
\\
= \gamma_n^{\dagger T} s_2^{\dagger} -\frac{1}{n}\sum_{i=1}^n b(\gamma_n^{\dagger T} Z^{(i)}) 
-
[\beta_n^{\dagger T} s_1^{\dagger} -\frac{1}{n}\sum_{i=1}^n b(\beta_n^{\dagger T} X^{(i)}) ]
.
\end{multline}
Also notice that $\lambda_n^{\dagger}=\arg\sup_{\lambda}\widetilde{\rho}_n(\beta_n^{\dagger},\gamma_n^{\dagger},\lambda)$. Therefore, it satisfies the first order condition
\begin{multline}\label{eq:first-lam}
0= \frac{\partial}{\partial \lambda}\widetilde{\rho}_n(\beta_n^{\dagger},\gamma_n^{\dagger},\lambda_n^{\dagger})=\frac{1}{n}\sum_{i=1}^n b(\gamma_n^{\dagger} Z^{(i)})-b(\beta_n^{\dagger}X^{(i)}) - b'\Big(\lambda_n^{\dagger}(\gamma_n^{\dagger T}Z^{(i)}-\beta_n^{\dagger T}X^{(i)})+{(\beta^{0})}^{T}X^{(i)} \Big) [\gamma_n^{\dagger T}Z^{(i)}-\beta_n^{\dagger T}X^{(i)}].
\end{multline}
\eqref{eq:s-gamma} and \eqref{eq:first-lam} together gives
$$
w(s_1^{\dagger},s_2^{\dagger})=0.
$$
Therefore, $(s_1^{\dagger},s_2^{\dagger})$ is a boundary point of $A_n$.
Furthermore, we have
\begin{equation*}\label{eq:normal-vec}
\nabla_{s_1} w(s_1^{\dagger},s_2^{\dagger}) =0_p\mbox{ and } \nabla_{s_2} w(s_1^{\dagger},s_2^{\dagger}) =\beta_n^{\dagger}.
\end{equation*}
Consequently, the normal vector of $A_n$ at $(s_1^{\dagger},s_2^{\dagger})$ is $-(\nabla_{s_1}w(s_1^{\dagger},s_2^{\dagger}),\nabla_{s_2}w(s_1^{\dagger},s_2^{\dagger}))=- (0_p,\gamma_n^{\dagger})$.
Because $A_n$ is a convex set, for all $(s_1,s_2)\in A_n$, we have
$$-(s_1-s_1^{\dagger},s_2-s_2^{\dagger})\cdot (0_p,\gamma_n^{\dagger})\leq 0.
$$
We complete the proof by combining the above display and that $w(s_1^{\dagger},s_2^{\dagger})=0$.
\section{Proof of Lemma~\ref{lemma:clt}}
Because $Y_i$s are independent, we
have
\begin{equation}\label{eq:vn}
v_n=\sum_{i=1}^n
(\gamma_n^{\dagger T}Z^{(i)}
-\beta_n^{\dagger T}X^{(i)}
)^2 Var^{Q^{\dagger}}(Y_i)= \sum_{i=1}^n
(\gamma_n^{\dagger T}Z^{(i)}
-\beta_n^{\dagger T}X^{(i)}
)^2 b''\Big(\lambda_n^{\dagger}(\gamma_n^{\dagger T}Z^{(i)}-\beta_n^{\dagger T}X^{(i)})+{(\beta^{0})}^{T}X^{(i)} \Big).
\end{equation}
According to Assumption A10 and A11, we have $v_n=O(n)$. We define a triangular array for $n,i\geq 1$
$$
U_{n,i}= v_n^{-\frac{1}{2}}\Big[\gamma_n^{\dagger T}Z^{(i)}Y_i- \beta_n^{\dagger T}X^{(i)}Y_i- b(\gamma_n^{\dagger T}X^{(i)})+ b(\beta_n^{\dagger T}X^{(i)})\Big].
$$
It is sufficient to show that $U_{n,i}$ satisfies conditions for the Lyapuvov central limit theorem \cite[page~362]{billingsley1995probability} for triangular arrays.
That is,
\begin{equation}\label{eq:Lyapounov-condition}
\lim_{n\to\infty}\sum_{i=1}^nE^{Q^{\dagger}} |U_{n,i}|^3=0.
\end{equation}
According to Assumption A11, $b(\cdot)$ is four times continuously differentiable. This guarantees that
$$
\sum_{i=1}^n E^{Q^{\dagger}} \Big[\gamma_n^{\dagger T}Z^{(i)}Y_i- \beta_n^{\dagger T}X^{(i)}Y_i- b(\gamma_n^{\dagger T}X^{(i)})+ b(\beta_n^{\dagger T}X^{(i)})\Big]^3 = O(n).
$$ 
Now we show that $v_n^{-1}=O(n^{-1})$. According to Assumptions A10 and A11 and \eqref{eq:def-widetilde-rho}, we have
$$
|\widetilde{\rho}_n^{\dagger}|
\leq \kappa \frac{1}{n}\sum_{i=1}^n |\gamma_n^{\dagger T}Z^{(i)}-\beta_n^{\dagger T}X^{(i)}| \leq \kappa \Big(
\frac{1}{n}\sum_{i=1}^n(\gamma_n^{\dagger T}Z^{(i)}-\beta_n^{\dagger T}X^{(i)})^2	\Big)^{	\frac{1}{2}}.
$$
On the other hand, Assumption A9 implies that
$$
\widetilde{\rho}_n^{\dagger}\geq \inf_{\gamma}\sup_{\lambda}\widetilde{\rho}_n(\beta^0,\gamma,\lambda)\geq \delta_1.
$$
Therefore, $$\frac{1}{n}\sum_{i=1}^n(\gamma_n^{\dagger T}Z^{(i)}-\beta_n^{\dagger T}X^{(i)})^2\geq \delta_1^2\kappa^{-2}.$$ According to Assumption A11 and \eqref{eq:vn}, $v_n\geq \varepsilon \sum_{i=1}^n(\gamma_n^{\dagger T}Z^{(i)}-\beta_n^{\dagger T}X^{(i)})^2 $. Together with the above display, we have
$v_n^{-1}=O(n^{-1})$.
Therefore,
$$
\sum_{i=1}^nE^{Q^{\dagger}} |U_{n,i}|^3= \lim_{n\to\infty} v_n^{-\frac{3}{2}}
\sum_{i=1}^nE^{Q^{\dagger}} \Big[\gamma_n^{\dagger T}Z^{(i)}Y_i- \beta_n^{\dagger T}X^{(i)}Y_i- b(\gamma_n^{\dagger T}X^{(i)})+ b(\beta_n^{\dagger T}X^{(i)})\Big]^3 = O(n^{-\frac{1}{2}}),
$$
and \eqref{eq:Lyapounov-condition} is proved.
\section{Proof of Lemma~\ref{lemma:expand-u}}
Let $
\beta(\mu)=\arg\sup_{\beta} u(\mu,\beta),
$
then $\beta(\mu)$ satisfies
\begin{equation}\label{eq:first-beta-mu}
\frac{\partial}{\partial \beta} u(\mu,\beta(\mu))=0.
\end{equation}
We first show that $\beta(\mu^{\dagger})=\beta_n^{\dagger}$.
Similar to \eqref{eq:check-theta-condtion}, we have
$$
\nabla_{\beta} \widetilde{\rho}_n(\beta_n^{\dagger},\gamma_n^{\dagger},\lambda_n^{\dagger})= \lambda_n^{\dagger}\frac{1}{n}\sum_{i=1}^n\Big[ -b'(\beta_n^{\dagger T}X^{(i)})X^{(i)}+b'\Big(\lambda_n^{\dagger}(\gamma_n^{\dagger T}Z^{(i)}-\beta_n^{\dagger T}X^{(i)})+{(\beta^{0})}^{T}X^{(i)}\Big)X^{(i)}\Big]=0.
$$
Therefore, we have
$$
\frac{\partial}{\partial \beta} u(\mu^{\dagger},\beta_n^{\dagger})= \mu^{\dagger} - \sum_{i=1}^n b'(\beta_n^{\dagger T}X^{(i)})X^{(i)}=0.
$$
Notice that  $\sup_{\beta}u(\mu^{\dagger},\beta)$ is a  strictly concave optimization problem. Therefore, $\beta_n^{\dagger}$ is its unique solution $\beta(\mu)$.
Now we compute $\nabla v(\mu)$.
$$
\nabla v(\mu)= \nabla u(\mu,\beta(\mu))= \frac{\partial}{\partial \mu}u(\mu,\beta(\mu)) +\frac{d}{d\mu}\beta(\mu) \frac{\partial}{\partial \beta} u(\mu,\beta(\mu)).
$$
The above display together with \eqref{eq:first-beta-mu} gives
\begin{equation}\label{eq:first-derivative}
\nabla v(\mu)=\frac{\partial}{\partial \mu}u(\mu,\beta(\mu))=\beta(\mu)-\beta_n^{\dagger}.
\end{equation}
Because $\beta(\mu^{\dagger})=\beta_n^{\dagger}$, we have that $v(\mu)$ is continuously differentiable and $v(\mu^{\dagger})=0$ and $\nabla v(\mu^{\dagger})=0$.
We proceed to the second derivatives of $v(\mu)$.
Applying implicit function theorem to \eqref{eq:first-beta-mu}, we have
$$
\nabla \beta(\mu)= - \frac{\partial^2}{(\partial \beta)^2}u(\mu,\beta)^{-1}\frac{\partial^2}{\partial \mu\partial\beta} u(\mu,\beta(\mu)) = -\frac{\partial^2}{(\partial \beta)^2}u(\mu,\beta)^{-1}.
$$
According to \eqref{eq:first-derivative} and the above equation, we complete the proof.
%\section*{References}
\bibliographystyle{plainnat}
\bibliography{reference,bibstat}

\end{document}